\documentclass[10pt, oneside, a4paper]{article}
\usepackage{geometry}
\geometry{a4paper, inner=23mm,outer=23mm,top=20mm,bottom=20mm}

\title{A new family of elliptic curves with unbounded rank}
\author{\large Richard \textsc{Griffon}} 
\date{} 


\usepackage[english]{babel}
\usepackage[T1]{fontenc}
\usepackage[utf8]{inputenc} 
\usepackage{lmodern}
\usepackage{amssymb,amsmath,amscd}  
\usepackage{mathrsfs}
\usepackage{ntheorem}
\usepackage{multicol}
\usepackage{tikz-cd}

{\theoremstyle{plain}
\theoremseparator{ --}
\newtheorem{theo}{Theorem}[section]}
{\theoremstyle{plain}
\theoremseparator{ --}
\newtheorem{coro}[theo]{Corollary}}
{\theoremseparator{ --}
\newtheorem{lemm}[theo]{Lemma}}
{\theoremseparator{ --}
\newtheorem{prop}[theo]{Proposition}}
{\theoremseparator{ --}
}
{\theoremseparator{ --}
\newtheorem{itheo}{Theorem}}

{\theorembodyfont{\normalfont}
\newtheorem{defi}[theo]{Definition}}
{\theoremstyle{plain}
\theorembodyfont{\normalfont}
}
{\theoremstyle{plain}
\theorembodyfont{\normalfont}
\newtheorem{rema}[theo]{Remark}}

{\theoremheaderfont{\itshape}
\theorembodyfont{\normalfont}
\theoremseparator{: }
\newtheorem*{proof}{Proof}}

\newcommand{\ProofEnd}{\hfill \ensuremath{\Box}}

\usepackage{tabularx}
\usepackage{array}
\usepackage{multirow}
\usepackage{multicol}
\usepackage{enumerate}
\usepackage{graphicx}

\usepackage{titlesec}
\titleformat{\subsection}[runin]
        {\large\bfseries}
        {\thesubsection.}
        {0.2em}
        {}
        [.\hspace{0.3em}-- ]
        
 \titleformat{\section}%
        {\center\large\bfseries}
        {\thesection.}
        {0.2em}
        {}
        []


\relpenalty=1000
\binoppenalty=1000

\usepackage{fancyhdr}

\fancyhead[R]{}
\fancyhead[L]{}
\fancyhead[C]{}

\usepackage{url}

 
\DeclareMathOperator{\rk}{rank}

\DeclareMathOperator{\ord}{ord}
\DeclareMathOperator{\DEGRE}{deg }
\renewcommand{\deg}{\DEGRE}

\newcommand{\Q}{\ensuremath{\mathbb{Q}}}

\newcommand{\Z}{\ensuremath{\mathbb{Z}}}
\renewcommand{\P}{\ensuremath{\mathbb{P}}}
\newcommand{\F}{\ensuremath{\mathbb{F}}}
\newcommand{\Qbar}{\ensuremath{\bar{\mathbb{Q}}}}

\newcommand{\Ecal}{\mathcal{E}}
\newcommand{\Escr}{\mathscr{E}}
\newcommand{\Ss}{\mathcal{S}}

\renewcommand{\bar}[1]{\ensuremath{\overline{#1}}}

\renewcommand{\tt}{\mathbf{t}}
\newcommand{\ie}{\textit{i.e.}{}}
\newcommand{\cf}{\textit{cf.}{}}

\usepackage{bm}
\newcommand{\bbeta}{\hm{\beta}}

\usepackage{bbm}
\newcommand{\trivcar}{\mathbbm{1}}

\newcommand{\ja}{\mathbf{j}}
\newcommand{\Mm}{\mathbf{m}}
\newcommand{\Mn}{\mathbf{n}}

\newcommand{\cchi}{\hm{\chi}}
\newcommand{\norm}{\mathbf{N}}
\newcommand{\gP}{\mathfrak{P}}

\newcommand{\rank}{\varrho}

\newcommand{\cond}{\mathcal{N}}
\newcommand{\tors}{_{\mathrm{tors}}}

\renewcommand{\subset}{\subseteq}


\definecolor{myemph}{rgb}{0.7,0.1,0.1}

\usepackage[colorlinks=true,
citecolor= orange, 
linkcolor=  myemph, 
urlcolor=black!60, 
pagebackref
]{hyperref}

\renewcommand{\O}{\mathcal{O}}





\begin{document}
\pagestyle{fancy}
\maketitle 

\noindent\hfill\rule{7cm}{0.5pt}\hfill\phantom{.}

\paragraph{Abstract --} 
Let $\F_q$ be a finite field of odd characteristic and $K= \mathbb{F}_q(t)$.  
For any integer $d\geq 2$ coprime to $q$, consider the elliptic curve $E_d$ over $K$ defined by
$y^2=x\cdot\big(x^2+t^{2d}\cdot x-4t^{2d}\big)$.
We show that the rank of the Mordell--Weil group $E_d(K)$ is unbounded as $d$ varies. 
The curve $E_d$ satisfies the BSD conjecture, so that its rank equals the order of vanishing of its $L$-function at the central point.
We provide an explicit expression for the $L$-function of $E_d$, and use it to study this order of vanishing in terms of $d$. 

\medskip

\noindent%
{\it Keywords:} 
Elliptic curves over function fields, 
Explicit computation of $L$-functions,
BSD conjecture,
Unbounded ranks,
Explicit Jacobi sums.
%
 
\smallskip
\noindent 
{\it 2010 Math. Subj. Classification:}  
11G05, 
11M38, 
11G40, 
14G10, 
11L05. 

\smallskip
\noindent
{\it E-mail:} \href{mailto:richard.griffon@unibas.ch}{richard.griffon@unibas.ch}
\medskip

\noindent\hfill\rule{7cm}{0.5pt}\hfill\phantom{.}


\section*{Introduction}
\pdfbookmark[0]{Introduction}{Introduction} 
\addcontentsline{toc}{section}{Introduction}
\setcounter{section}{0}

\paragraph{}
Let $\F_q$ be a finite field of characteristic $p\geq 3$ and $K:=\F_q(t)$. 
It has been known for a while that there are elliptic curves over $K$ with arbitrary large Mordell--Weil ranks. 
The first examples were provided in \cite{ShaTate_Rk} by Shafarevich and Tate:  they construct a sequence $\{E_n\}_{n\geq 1}$ of \emph{isotrivial} elliptic curves over $K$ such that $\rk E_n(K)\to\infty$.
The first instance of a sequence of  \emph{nonisotrivial} elliptic curves with unbounded ranks is due to Ulmer in \cite{Ulmer_LargeRk}.
Ulmer subsequently proved a rather general theorem (see \cite{Ulmer_largeLrank}) which ensures, under a parity condition on the conductor, that certain so-called Kummer families of elliptic curves over~$K$ have unbounded \emph{analytic} ranks.
In parallel, Berger  proposed in  \cite{Berger}  a construction of Kummer families of elliptic curves for which the BSD conjecture holds for each curve in the family: in these cases, Ulmer's `unbounded \emph{analytic} rank' result mentioned above can be translated into a `unbounded \emph{algebraic} rank' theorem.

\paragraph{}
In \cite{Berger}, Berger gives three examples of families of elliptic curves to which her construction applies: 
for all but one of them, she concludes about unboundedness of the algebraic rank using Ulmer's result \cite{Ulmer_largeLrank}. 
In this article, we treat the remaining example (to which \cite{Ulmer_largeLrank} does not apply). 
For any integer $d\geq 1$, consider the elliptic curve $E_d/K$ given by the affine Weierstrass model:
\begin{equation*}\label{ieq.Wmodel}
E_d: \qquad 
Y^2+2t^d\cdot XY -4t^{2d}\cdot Y = X^3 -6t^d\cdot X^2+8t^{2d} \cdot X.	
\end{equation*}

The main goal of this article is to prove that: 
\begin{itheo}
As $d\geq 1$ varies, the ranks of the Mordell--Weil groups $E_d(K)$ are unbounded; \ie{}, 
\label{itheo.unbounded}
\[\limsup_{ d\geq 1}\rk(E_{d}(K)) = +\infty. \]
Further, the average Mordell--Weil rank of $E_d(K)$ is unbounded.
\end{itheo}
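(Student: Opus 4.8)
The plan is to reduce the statement to one about orders of vanishing of $L$-functions at the central point (using BSD), to write these $L$-functions down explicitly in terms of Jacobi sums, and then to count those Jacobi sums that force a central zero. First I would record that the BSD conjecture holds for $E_d/K$: the Weierstrass model of $E_d$ is obtained from that of $E_1$ by substituting $t^{d}$ for $t$, so $E_d$ is the pull-back of the fixed curve $E_1/K$ along the degree-$d$ Kummer cover $t\mapsto t^{d}$ of the $t$-line; thus $E_d$ lies in one of Berger's families, and by \cite{Berger} the elliptic surface $\Escr_d\to\P^1$ attached to $E_d/K$ is dominated by a product of two curves. Hence the Tate conjecture holds for $\Escr_d$, which is equivalent to the BSD conjecture for $E_d/K$; in particular $\rk(E_d(K))=\ord_{s=1}L(E_d/K,s)$, and it is enough to prove the two unboundedness statements for $d\mapsto\ord_{s=1}L(E_d/K,s)$.

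Next I would make the $L$-function explicit. Multiplicativity of $L$-functions along the Kummer cover gives a factorisation $L(E_d/K,T)=\prod_{e\mid d}M_e(T)$, where $T=q^{-s}$ and $M_e\in 1+T\Z[T]$ is the "level-$e$ new factor", \ie{} the contribution of the characters of exact order~$e$, grouped into Frobenius orbits. A point count of $E_d$ over the extensions $\F_{q^{n}}$, using the special shape of the Weierstrass model, then expresses the inverse roots of $M_e$ as explicit Jacobi sums attached to those orbits. Since the Riemann hypothesis holds for $L(E_d/K,T)$ (a theorem in this setting), every inverse root of $L(E_d/K,T)$ has absolute value $q$, so $\ord_{s=1}L(E_d/K,s)$ equals the number of inverse roots equal to $q$; translating through the explicit description, $r_e:=\ord_{T=q^{-1}}M_e(T)$ counts the Jacobi sums in the level-$e$ packet that assume their trivial value, and $\ord_{s=1}L(E_d/K,s)=\sum_{e\mid d}r_e$.

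The heart of the matter — and where I expect the genuine difficulty to be — is to prove that $r_e\gg e$ for every $e$ in a suitable infinite set $S$ of integers coprime to $q$, cut out by a condition ensuring that the Gauss sums entering the construction are "pure" (a supersingularity-type condition), hence that the Jacobi sums are explicitly computable. Here the method of \cite{Ulmer_largeLrank} gives nothing: its hypothesis is a parity condition on the conductor that fails for $E_d$, so the required central zeros cannot be read off from the sign of the functional equation. Instead one must \emph{evaluate} the Jacobi sums in question and check by hand that a positive proportion of them — of the order of $e$ many — take their trivial value; this should come down, exploiting the special form of $E_d$ (in particular its rational $2$-torsion point), to Gauss-sum factorisations together with Hasse--Davenport relations and Stickelberger-type congruences pinning down the archimedean arguments and $p$-adic valuations of these particular sums.

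Finally, granting the previous step, $\rk(E_d(K))=\sum_{e\mid d}r_e$. Taking $d$ to be a product of $k$ distinct elements $e_1<\dots<e_k$ of $S$ (such a $d$ is automatically coprime to $q$), one gets $\rk(E_d(K))\geq\sum_{i=1}^{k}r_{e_i}\gg\sum_{i=1}^{k}e_i$, which is unbounded as $k\to\infty$; hence $\limsup_{d\geq 1}\rk(E_d(K))=+\infty$. For the average, interchanging the order of summation and keeping only $e\leq\sqrt{D}$,
\[
\sum_{d\leq D}\rk(E_d(K))=\sum_{e\leq D}r_e\,\lfloor D/e\rfloor\ \geq\ \sum_{\substack{e\leq\sqrt{D}\\ e\in S}} r_e\,\lfloor D/e\rfloor\ \geq\ \frac{D}{2}\sum_{\substack{e\leq\sqrt{D}\\ e\in S}}\frac{r_e}{e}\ \gg\ D\cdot\#\{e\in S:\ e\leq\sqrt{D}\},
\]
using $r_e\gg e$ on $S$; since $S$ is infinite, the last cardinality tends to infinity, so dividing by $D$ shows that the average Mordell--Weil rank of $E_d(K)$ is unbounded.
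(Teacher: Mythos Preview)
Your central quantitative claim is false, and this breaks the argument for the average (and, as written, for the $\limsup$ too). You assert that $r_e:=\ord_{T=q^{-1}}M_e(T)\gg e$ for $e$ in an infinite set~$S$. But the ``new part at level~$e$'' is a product over the $\phi(e)$ characters of exact order~$e$, grouped into $\phi(e)/o_q(e)$ Frobenius orbits each contributing a factor of degree~$o_q(e)$; hence $\deg M_e=\phi(e)$ and necessarily $r_e\le\phi(e)<e$. What the explicit Jacobi-sum computation actually gives --- via the elementary Shafarevich--Tate lemma, not Stickelberger --- is that when $e$ is \emph{supersingular} (i.e.\ $e\mid q^a+1$ for some $a$) \emph{every} orbit contributes a central zero, so $r_e=\phi(e)/o_q(e)$. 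This can perfectly well equal~$1$, e.g.\ whenever $q$ generates $(\Z/e\Z)^\times$, so no bound of the shape $r_e\gg e$ exists.

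With the true value $r_e=\phi(e)/o_q(e)$, your $\limsup$ argument survives in weakened form (each supersingular $e>2$ gives $r_e\ge1$, so a product of $k$ pairwise coprime supersingular $e_i$'s has rank $\ge k$); the paper takes a different route, choosing a \emph{single} supersingular $d$ of special shape such as $d_n=q^n+1$, for which $o_q(e)\mid o_q(2d_n)=2n$ for all $e\mid 2d_n$ and hence $\rk E_{d_n}(K)=\sum_{e\mid 2d_n}\phi(e)/o_q(e)\ge d_n/n\gg d_n/\log d_n$. The point is that the rank is large not because any individual $r_e$ is large, but because $o_q(2d)$ is small. Your average argument, however, does not survive: plugging in the correct bound, the divisor-switching step yields at best $\tfrac{1}{D}\sum_{d\le D}\rk E_d(K)\gg\sum_{e\in S,\,e\le\sqrt{D}}\phi(e)/(e\, o_q(e))$, and showing this diverges --- let alone at rate $D^{\alpha}$ --- requires controlling simultaneously the density of supersingular $e$ and the size of $o_q(e)$. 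The paper does not attempt this directly; it invokes the Pomerance--Shparlinski construction of $\gg x^{\alpha}$ supersingular integers $d\le x$ with $o_q(d)=x^{o(1)}$, which is a genuinely nontrivial input from analytic number theory that your outline does not supply.
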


\noindent
We prove the two assertions of Theorem \ref{itheo.unbounded} separately (see Theorems \ref{theo.unbounded} and \ref{theo.unbounded.redux}).  
First, 
we will construct several sequences of `special' integers $(d_n)_{n\geq 1}$ for which we show that 
\[\rk\big(E_{d_n}(K)\big)\gg_q d_n/\log d_n.\]
More specifically, we exhibit sequences of even (resp. odd) integers $d_n$'s such that this lower bound holds (see Theorem \ref{theo.unbounded}). 
Secondly, 
we use a result from \cite{PomeShpa} to prove that the average rank of $E_d(K)$ is  unbounded: 
more precisely, we will show (in Theorem \ref{theo.unbounded.redux}) that
\[ \exists \alpha>1/2, \qquad  \frac{1}{x}\sum_{d\leq x}\rk\big(E_{d}(K)\big) \geq x^\alpha \qquad (\text{as } x\to\infty).\]
Of course, the second assertion implies the first one, and our treatment might appear redundant.
However,  the two proofs shed different lights on the behaviour of the sequence $d\mapsto \rk\big(E_{d}(K)\big)$.
Indeed, for the explicitly constructed $(d_n)_{n\geq 1}$ in Theorem \ref{theo.unbounded}, the corresponding curves $E_{d_n}$ have `very large' ranks (as large as allowed by Brumer's bound, \cf{} \cite{Brumer} and our Remark \ref{rema.unbdd}\eqref{item.brumer}).
However, these integers~$d_n$ are very far apart from each other, and they cannot be deemed representative of the `typical' size of $\rk\big(E_{d}(K)\big)$.
The average result  fills in that gap by showing that the rank of $E_d$ is `usually large'; but one then loses the explicit and precise character of the first construction.

The second proof also reveals that the sequence~$\{E_d\}_d$ at hand is quite special: indeed, Brumer has proved that the average rank of elliptic curves over $\F_q(t)$ is bounded (see \cite{Brumer}).

\paragraph{}
Let us now explain the strategy of the proof of Theorem \ref{itheo.unbounded}  as we give the plan of the paper.
In section~\ref{sec.1}, we start by introducing the   elliptic curves $E_d$ and by computing their relevant invariants. 
We then describe the torsion subgroup $E_d(K)\tors$ (Theorem~\ref{theo.tors}), and provide a point of infinite order in~$E_d(K)$ (Corollary~\ref{coro.positive.rk}).
We also explain why the results in \cite{Ulmer_largeLrank} cannot be used here.

Our first step towards the proof of Theorem \ref{itheo.unbounded} will be to give an explicit formula for the $L$-function of~$E_d/K$.
To avoid introducing too many notations here, let us only state the following special case of our result (see Theorem \ref{theo.Lfunc} for the general version):

\begin{itheo}\label{itheo.Lfunc}
Let $\F_q$ be a finite field of odd characteristic. 
For any integer $d\geq 1$ such that $2d\mid q-1$, choose a character $\cchi:\F_q^\times\to\Qbar^\times$ of exact order $2d$.
The $L$-function of the elliptic curve $E_{d}/K$ is given by
\[L(E_d/K, T) = (1-qT)\cdot \prod_{\substack{ 1\leq n \leq 2d-1 \\ n\neq d/2, 3d/2} }\left(1 -  B(n)\cdot T\right), \]
 where, for all $n\in\{1, \dots, 2d-1\}$ with $n\neq d/2, 3d/2$, we let
\[ B(n) := \cchi^{2n}(4)\cdot  \sum_{u\in\F_q}\sum_{v\in\F_q} \cchi^n(u(1-u)) \cchi^{2n + d}(v)\cchi^{-n}(1-v).\]
\end{itheo}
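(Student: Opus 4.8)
The plan is to compute $L(E_d/K,T)$ by first passing to a model where the Kummer/fibre-product structure underlying Berger's construction is visible, then applying the Grothendieck–Lefschetz trace formula fibrewise, and finally expressing the resulting exponential sums as twisted Jacobi sums. First I would put $E_d$ into the short Weierstrass form quoted in the abstract, $y^2 = x(x^2 + t^{2d}x - 4t^{2d})$, via the standard completion of the square; this is the model on which the dependence on $d$ enters purely through the substitution $t \mapsto t^{2d}$, so $E_d$ is the pullback of the single curve $E_1$ (with parameter $s = t^{2d}$) along the degree-$2d$ Kummer cover $t \mapsto t^{2d} = s$. Under the hypothesis $2d \mid q-1$, this cover is tamely and totally ramified over $s=0$ and $s=\infty$ and étale elsewhere, and its Galois group $\mu_{2d}(\F_q)$ acts; decomposing the relevant cohomology into $\cchi^n$-isotypic pieces for $n = 0, 1, \dots, 2d-1$ is what will produce the product over $n$.

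Concretely, I would work with the $L$-function as an alternating product of characteristic polynomials of Frobenius on $H^i_c$ of the open subvariety of $\mathbb{A}^1_t$ over which $E_d$ has good reduction, twisted by the sheaf $\mathcal{F}$ attached to $E_d$; equivalently, write $L(E_d/K,T) = \prod_{v} \det(1 - \mathrm{Frob}_v\, T^{\deg v} \mid \ldots)^{-1}$ and use that only $H^1$ contributes (the curve is non-isotrivial, so $H^0$ and $H^2$ of the open part vanish after accounting for the bad fibres). The bad fibres are at $t=0$, $t=\infty$, and the zeros of the discriminant of $E_1$ at $s = t^{2d}$; I would compute the local conductor exponents and Tamagawa-type local factors at each, and in particular verify that the extra factor $(1-qT)$ and the omission of the indices $n = d/2, 3d/2$ come from the reduction types over $t=0$ and $t=\infty$ (the excluded $n$ being exactly those for which $\cchi^n$ or $\cchi^{2n+d}$ behaves specially there — I would pin this down via the local behaviour of the Kummer sheaves $\mathcal{L}_{\cchi^n}$). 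Summing the fibral traces and applying Grothendieck–Lefschetz, the count of points on $E_d$ over $\F_{q^m}$ splits, via the character decomposition of the Kummer cover, into a main term giving $(1-qT)$ and a sum over $n$ of two-variable character sums; substituting the Weierstrass equation and the cover relation $v = t^{2d}$, diagonalising, these sums become exactly $\cchi^{2n}(4)\sum_{u,v} \cchi^n(u(1-u))\,\cchi^{2n+d}(v)\,\cchi^{-n}(1-v)$, i.e. $B(n)$.

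The main obstacle, and where I would spend the most care, is the bookkeeping at the two wildly-or-tamely special fibres $t = 0$ and $t = \infty$: getting the Euler factor there exactly right is what determines the degree of $L$, the spurious factor $(1-qT)$, and precisely which two values of $n$ to drop. This requires a careful Tate-algorithm analysis of the reduction of $E_1$ at $s=0$ and $s=\infty$, then tracking how that reduction type pulls back under a totally ramified degree-$2d$ cover (using that $2d \mid q-1$ so everything is tame), and matching the local Galois representation with the local monodromy of $\bigoplus_n \mathcal{L}_{\cchi^n}$. The global skeleton — Kummer pullback, character decomposition, Grothendieck–Lefschetz, identification with Jacobi-type sums — is routine once the local picture is settled; I would also double-check the final formula against the degree predicted by the Grothendieck–Ogg–Shafarevich formula for $\deg L(E_d/K,T)$ in terms of the conductor of $E_d$, as a consistency test.
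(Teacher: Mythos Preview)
Your proposal outlines a valid and conceptually coherent route, but it is genuinely different from the paper's. You reach for the cohomological toolkit (Grothendieck--Lefschetz, Kummer sheaves $\mathcal{L}_{\cchi^n}$, local monodromy at the bad fibres), whereas the paper proceeds entirely by elementary manipulation of character sums: it writes $\log L(E_d/K,T)$ as a power series whose $m$th coefficient is $\sum_{\tau\in\P^1(\F_{q^m})} A_d(\tau,q^m)$, evaluates $A_d(0,q^m)$ and $A_d(\infty,q^m)$ directly from the reduction types, and for $\tau\in\F_{q^m}^\times$ replaces the sum over $\tau$ by a sum over characters $\chi$ with $\chi^{2d}=\trivcar$ via the count $|\{\tau:\tau^{2d}=z\}|=\sum_{\chi^{2d}=\trivcar}\chi(z)$. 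The resulting double sum $B(\F,\chi)=\sum_{x,z}\chi(z)\lambda(x^3+x^2z-4xz)$ is then reduced by hand (Proposition~2.1) to a product of two Jacobi sums, and a separate identity (Proposition~2.2) shows that the characters of order dividing~$4$ cancel exactly against $A_d(0,q^m)+A_d(\infty,q^m)$ to leave the stray $-q^m$ that produces the $(1-qT)$ factor and removes $n=d/2,3d/2$. The Hasse--Davenport relation then packages the power series into a polynomial. No \'etale cohomology, no sheaf formalism, no Grothendieck--Ogg--Shafarevich.

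What each approach buys: yours is more structural and would generalise more readily to other Kummer families, but it front-loads machinery and leaves the hardest concrete step (turning the fibral trace into the specific two-variable sum $B(n)$) as a black box labelled ``diagonalising''; that step is exactly the content of the paper's Proposition~2.1 and is where the real work lies in either approach. The paper's route is self-contained and makes the cancellation at the bad fibres completely explicit --- the excluded indices arise not from monodromy considerations but from a direct character-sum identity (Proposition~2.2 equates $A_d(0,q^m)$ with minus the sum of $\ja(\theta,\theta)$ over order-$4$ characters~$\theta$). If you pursue your route, be aware that your diagnosis of \emph{why} $n=d/2,3d/2$ drop out (``$\cchi^n$ or $\cchi^{2n+d}$ behaves specially'') is not quite the mechanism: it is that $\cchi^n$ has order exactly~$4$ for those $n$, and the corresponding Jacobi-sum contribution cancels against the good-reduction fibre at $t=0$.
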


\noindent
The relevant objects are introduced in section \ref{sec.prelim} and the proof of Theorem \ref{theo.Lfunc} is given in section \ref{sec.Lfunc}.
It is based on direct manipulations of character sums related to `counting points' on the various reductions of~$E_d$.
Given that there are very few elliptic curves over $K$ for which the $L$-function is explicitly known, this Theorem may be of independent interest. 

The  expression of $L(E_d/K, T)$ in Theorem \ref{itheo.Lfunc} is sufficiently explicit that one can study its order of vanishing at $T=q^{-1}$ (see Corollary \ref{coro.expr.rk}). 
Our main interest in doing so is that the BSD conjecture is known to hold for $E_d/K$ \ie{}, one has:
\[\ord_{T=q^{-1}}L(E_d/K, T) = \rk\big(E_d(K)\big).\]
This fact has been proved by Berger in \cite{Berger}, and we 
sketch her proof in section \ref{sec.construction}.
To this effect, we also briefly recall there how the curves $E_d$ are constructed. 

The expression for $\ord_{T=q^{-1}}L(E_d/K, T)$ obtained in Corollary \ref{coro.expr.rk} becomes more tractable for certain values of $d$: 
specifically, for those $d\geq 1$ such that $2d$ is \emph{supersingular}. 
We give more details about this in section \ref{sec.supersing}, recalling results from \cite{ShaTate_Rk} and \cite{Ulmer_LargeRk}. 
Our main result (Theorem \ref{itheo.unbounded}) is proved in section \ref{sec.ubnbdrk}: 
the unboundedness of the rank is first proved in \S\ref{sec.unbounded}, 
we then discuss the unboundedness of the average rank in \S\ref{sec.unbounded.redux}
and, finally, we make an observation in \S\ref{sec.amusing}   (Theorem \ref{theo.amusing})   providing a third proof of Theorem~\ref{itheo.unbounded} which is conditional to a certain hypothesis about primes.

\numberwithin{equation}{section}    


\section[The curves]{The curves $E_d$ and their invariants}
\label{sec.1}

\begin{center}
\emph{Throughout this article, we fix a finite field $\F_q$ of characteristic $p\geq 3$,
and we denote by $K=\F_q(t)$.}
\end{center}

\paragraph{}
For any integer $d\geq 1$, consider the elliptic curve $E_d/K$ defined by the affine Weierstrass model: 
 \begin{equation*}
E_d: \qquad 
Y^2+2t^d\cdot XY -4t^{2d}\cdot Y= X^3 -6t^d\cdot X^2+8t^{2d} \cdot X.	
\end{equation*}
The sequence $\{E_d\}_{d\geq 1}$ is an example of a 
\emph{Kummer family} of elliptic curves over $K$ (see \S\ref{sec.Ulmer}). 
This sequence corresponds to the special case `$a=1/2$' of Berger's example%
\footnote{Note that Berger assumes that $d$ is coprime to $p$, which we don't. 
We also point out a small typo in the Weierstrass model of $E_1$ given p.\,3029 of \cite{Berger}: in standard notations, the coefficient $a_2$ should be $-(a+1) t$ instead of $-(a+1)$.} 
 (6) in~\S4.3 of \cite{Berger}, further studied in~\S4.4 \emph{loc.\,cit.} as example (2).
The family $\{E_d\}_{{d\geq 1, (d,q)=1}}$ was also studied in \cite[Chap. 7]{Griffon_PHD}, where $E_d$ is denoted by $B_{1/2, d}$.
In section \ref{sec.construction} below, we will recall how these curves $E_d$ are constructed in~\cite{Berger}.  

\paragraph{}
From the above model of $E_d$, a straightforward computation shows that the $j$-invariant  of $E_d$ is
\begin{equation*}
j(E_d) =	\frac{2^4\cdot (t^{2d}+12)^3}{t^{2d}+16}.
\end{equation*}
We observe that $j(E_d)\in K$ is clearly not constant (so that $E_d/K$ is nonisotrivial) and not a $p$th power (\ie{} the extension $K/\F_q(j(E_d))$ is separable).
One can easily transform the model above into a short Weierstrass model (since $K$ has characteristic $\neq 2$) and obtain that $E_d$ is also given by  
\begin{equation}\label{eq.Wmodel} 
E_d:\qquad y^2=x\cdot\big(x^2+t^{2d}\cdot x-4t^{2d}\big).
\end{equation}
From now on, we work with this model of $E_d$, unless otherwise specified. 
The discriminant of \eqref{eq.Wmodel} 
is easily seen to be $\Delta_d = 2^8\cdot  t^{6d}(t^{2d} +16)$.

\begin{rema}\label{rema.kummer} 
We point out that our point of view on Kummer families is slightly different from that of Ulmer's and Berger's.
In their articles \cite{Berger, Ulmer_largeLrank, Ulmer_MWJacobians}, 
they fix an elliptic curve $E/\F_q(t)$ and consider the variation of the rank of $E(\F_{q}(t^{1/d}))$ as $d$ varies. 
In the present paper, the base field $\F_q(t)$ is fixed and, for all $d$, we study the ranks of $E_d(\F_q(t))$ as $d$ varies, where the elliptic curve $E_d/\F_q(t)$ is obtained from $E_1$ by `replacing each occurence of $t$ in a Weierstrass model of $E_1$ by $t^d$'. 
These two points of view are formally equivalent; we chose the latter because it is better suited to our purpose.
\end{rema}

\begin{rema}\label{rema.alld}
Let $d\geq 1$ be an integer. One can write $d=d' p^e$ for some integers $e\geq 0$ and $d'\geq 1$ such that $\gcd(d',p)=1$.
The elliptic curves $E_d$ and $E_{d'}$ are then isogenous over $K$. Indeed, the $p^e$th power Frobenius isogeny $F_{p^e}$ provides a (purely inseparable) $K$-isogeny $F_{p^e}: E_{d'}\to E_d$.

Being $K$-isogenous, the elliptic curves $E_{d'}$ and $E_d$ have the same conductor (see \cite[App. C]{Gross_bsd}, or \cite[Coro. VII.7.2]{AEC} and \cite[Ex. IV.40]{ATAEC}), 
the same $L$-function (see \cite[Chap. I, Lemma 7.1]{Milne_ArithDual_2nd}) 
and the same Mordell--Weil rank (because the kernel of an isogeny is finite).
\end{rema}

\subsection{Conductor of $E_d$}

Let us first describe the bad reduction of $E_d$ and deduce the degree of the conductor of $E_d$. 
We identify, as we may, the finite places of $K$ to monic irreducible polynomials in $\F_q[t]$.

For any integer $d\geq 1$ coprime to $q$, let $M_d$ be the set of (finite) places of $K$ that divide $t^{2d}+16$; 
the set $M_d$ can be also be viewed as the set of closed points of $\P^1_{/\F_q}$ corresponding to $2d$th roots of $-16$.

\begin{prop}\label{prop.badred}
For any $d\geq 1$ coprime to $q$, the elliptic curve $E_d$ given by \eqref{eq.Wmodel} has good reduction outside ${S=\{0\}\cup M_d\cup\{\infty\}}$, and 
\begin{itemize}
\item At $v=0$, $E_d$ has good (resp.  additive) reduction when $d$ is even (resp. $d$ odd),
\item At $v\in M_d$, $E_d$ has split multiplicative reduction,
\item At $v=\infty$, $E_d$ has split multiplicative reduction.
\end{itemize}
\end{prop}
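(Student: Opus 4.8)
The plan is to compute the reduction type of $E_d$ at each place of $K$ by exhibiting a Weierstrass model that is minimal at that place and inspecting its special fibre (equivalently, by running Tate's algorithm). Starting from the model \eqref{eq.Wmodel}, whose discriminant is $\Delta_d=2^8\,t^{6d}(t^{2d}+16)$, and using that $p\neq 2$ makes $2^8$ a unit: at every finite place $v\notin\{0\}\cup M_d$ the model \eqref{eq.Wmodel} is integral with $\Delta_d\in\O_v^\times$, so $E_d$ has good reduction there. It remains to analyse the three cases $v\in M_d$, $v=\infty$ and $v=0$. Note that the bad place at infinity is not visible in $\Delta_d$, because \eqref{eq.Wmodel} is not even integral at $\infty$.

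For $v\in M_d$: since $2d$ is coprime to $q$, the polynomial $t^{2d}+16$ is separable, hence $\ord_v(\Delta_d)=1$, which forces \eqref{eq.Wmodel} to be minimal at $v$; reducing it modulo $v$ and using $t^{2d}\equiv -16\pmod v$ turns the cubic into $x\,(x-8)^2$, a nodal cubic with node at $(8,0)$, so $E_d$ has multiplicative reduction there. Whether this reduction is split is then decided by whether the two tangent directions at the node are rational over $k_v$, which I would settle by a short computation using that same congruence, obtaining the split case. For $v=\infty$: set $s:=1/t$ and substitute $x=s^{-2d}x_1$, $y=s^{-3d}y_1$ in \eqref{eq.Wmodel}; this produces the model $y_1^2=x_1^3+x_1^2-4s^{2d}x_1$, which is integral at $\infty$ with $c_4$-invariant a unit there, hence minimal, and whose special fibre $y_1^2=x_1^2(x_1+1)$ is a nodal cubic whose node at the origin has $k_\infty$-rational tangent directions $y_1=\pm x_1$; thus $E_d$ has split multiplicative reduction at $\infty$.

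Finally, for $v=0$ — the only place where the parity of $d$ matters — I would clear the powers of $t$ in \eqref{eq.Wmodel} by the substitution $x=t^{2m}x_1$, $y=t^{3m}y_1$ with $m=\lfloor d/2\rfloor$, the largest $m$ that keeps the coefficients integral at $0$. If $d=2e$ is even, $m=e$ gives $y_1^2=x_1^3+t^d x_1^2-4x_1$ with transformed discriminant $2^8(t^{2d}+16)$, a unit at $0$; this is the minimal model and its special fibre $y_1^2=x_1(x_1-2)(x_1+2)$ is smooth, the roots $0,\pm 2$ being distinct since $p\neq 2$ — so $E_d$ has good reduction at $0$. If $d=2e+1$ is odd, $m=e$ gives $y_1^2=x_1^3+t^{d+1}x_1^2-4t^2 x_1$ with transformed discriminant $2^8 t^6(t^{2d}+16)$, of valuation $6<12$ and hence minimal, and with cuspidal special fibre $y_1^2=x_1^3$ — so $E_d$ has additive reduction at $0$. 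Putting the four cases together proves the proposition. I expect the place at infinity to be the main obstacle: it is the bad place not flagged by the naive discriminant, and the whole analysis there depends on choosing the right change of variables; the only other delicate point is the valuation bookkeeping at $v=0$ (choosing the optimal $m$ and checking the transformed models are genuinely minimal, so that their reductions are the N\'eron special fibres), while the computations at the places $v\in M_d$ and at the remaining good places are immediate.
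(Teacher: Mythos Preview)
Your approach is exactly the paper's: apply Tate's algorithm place by place. The paper states this in one line and simply records the resulting Kodaira types; you supply the explicit changes of variable and the minimality checks, and those are all correct at $v=0$ and at $v=\infty$.

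There is, however, one step that does not go through. At $v\in M_d$ you correctly reduce to $y^2=x(x-8)^2$ and locate the node at $(8,0)$, but you then assert that ``a short computation'' yields the split case. Carrying that computation out gives the tangent cone $y^2=8X^2$ (with $X=x-8$), which splits over $k_v$ if and only if $2\in(k_v^\times)^2$. The congruence $\tau^{2d}\equiv -16$ forces $-1=(\tau^d/4)^2$ to be a square in $k_v$, but says nothing about $2$. Concretely, for $q=5$ and $d=1$ one has $t^{2}+16=(t-2)(t+2)$ in $\F_5[t]$, so $k_v=\F_5$ at each $v\in M_1$; the reduced curve $y^2=x(x+2)^2$ has $7=q+2$ rational points, whence $a_v=-1$ and the reduction is \emph{nonsplit} multiplicative. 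So the ``split'' clause at $v\in M_d$ cannot be established as written --- this is in fact a small inaccuracy in the stated proposition rather than a defect in your method, and it is harmless downstream: Corollary~\ref{coro.deg.cond} only needs semistability at $v\in M_d$, and the $L$-function computation in \S\ref{sec.Lfunc} handles those places directly via character sums without ever invoking the split/nonsplit distinction.
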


\begin{proof}
This can be proved directly 
by applying Tate's algorithm to $E_d$ 
(see \cite[Chap. IV, \S9]{ATAEC}). This algorithm 
 actually yields    more   information about the bad reduction of $E_d$:
at $v=0$, $E_d$ has either good reduction (type $\mathbf{I}_0$) or additive reduction of type $\mathbf{I}_0^\ast$ depending on whether $d$ is even or odd, respectively. 
At $v\in M_d$ (resp. at $v=\infty$), $E_d$ has split multiplicative reduction of type $\mathbf{I}_1$ (resp. of type $\mathbf{I}_{4d}$).
\ProofEnd\end{proof}

\begin{coro}\label{coro.deg.cond}
Let $d\geq 1$ be an integer, and $E_d/K$ the elliptic curve defined by \eqref{eq.Wmodel}. 
We denote by $\cond_d$   the conductor divisor of $E_d$, and by $\cond^{f}_d$   the part of $\cond_d$ which is prime to $0$ and $\infty$.

Writing $d=d'\cdot p^e$ where $e\geq 0$ and $\gcd(d', p)=1$, one has 
\begin{equation}\label{eq.deg.cond}
\deg\cond_d =\deg\cond_{d'}= \begin{cases}
 2d'+1 & \text{if $d'$ is even}, \\
 2d'+3 & \text{if $d'$ is odd},
 \end{cases}
\quad\text{and} \quad
\deg\cond^{f}_d = \deg \cond^{f}_{d'} = 2d'.
\end{equation}
\end{coro}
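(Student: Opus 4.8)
The plan is to read off the degree of the conductor directly from the local reduction data supplied by Proposition \ref{prop.badred} and its proof, using the standard formula $\deg\cond_d=\sum_v f_v(E_d)\cdot\deg v$, where $f_v$ denotes the exponent of the conductor at $v$. Since $K$ has characteristic $p\geq 3$, there is no wild ramification at the places of multiplicative reduction, and at a place of additive reduction of type $\mathbf{I}_0^\ast$ in residue characteristic $\neq 2,3$ one again has tame conductor exponent $f_v=2$; so in every case $f_v$ equals $0$ (good reduction), $1$ (multiplicative reduction) or $2$ (the additive type $\mathbf{I}_0^\ast$ that occurs here). This is what makes the computation clean.

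First I would dispose of the general $d$ via Remark \ref{rema.alld}: writing $d=d'p^e$ with $\gcd(d',p)=1$, the curves $E_d$ and $E_{d'}$ are $K$-isogenous, hence have the same conductor, so it suffices to treat the case $\gcd(d,q)=1$ and then substitute $d'$ for $d$ in the final formulas. Next I would assemble the contributions place by place, in the two parity cases. At $v=0$: contribution $0$ if $d$ is even, and $f_0=2$ (type $\mathbf{I}_0^\ast$), i.e.\ contribution $2\cdot\deg(0)=2$, if $d$ is odd. At the places $v\in M_d$: each has $f_v=1$ (type $\mathbf{I}_1$), and $\sum_{v\in M_d}\deg v=\deg(t^{2d}+16)=2d$ — here I would note that $t^{2d}+16$ is separable, since its derivative $2d\,t^{2d-1}$ shares no root with it (as $d$ is prime to $p$ and $-16\neq 0$), so $M_d$ really accounts for degree exactly $2d$ with each place counted once. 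At $v=\infty$: $f_\infty=1$ (type $\mathbf{I}_{4d}$, multiplicative), contributing $1$. Summing: $\deg\cond^f_d=2d$ regardless of parity, while $\deg\cond_d=2d+1$ when $d$ is even and $2d+(2+1)=2d+3$ when $d$ is odd. Replacing $d$ by $d'$ throughout gives \eqref{eq.deg.cond}.

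The only genuine point requiring care — and the one I would flag as the main obstacle, modest as it is — is justifying that the conductor exponents are the tame ones, i.e.\ that there is no wild contribution. For the multiplicative places this is automatic ($f_v=1$ always). For the additive place $v=0$ when $d$ is odd, I would invoke the fact that the reduction type is $\mathbf{I}_0^\ast$ and that $p\geq 3$, together with Ogg's formula $f_v=v(\Delta_{\min})-m_v+1$ (where $m_v$ is the number of components of the special fibre) or the Néron--Ogg--Shafarevich criterion in the tame range, to conclude $f_v=2$. Everything else is bookkeeping with the list in Proposition \ref{prop.badred} and the separability of $t^{2d}+16$; the $K$-isogeny invariance from Remark \ref{rema.alld} then reduces the apparent dependence on $d$ to a dependence on $d'$.
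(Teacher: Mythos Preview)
Your proposal is correct and follows essentially the same approach as the paper's proof: reduce to $d$ coprime to $p$ via the isogeny in Remark~\ref{rema.alld}, then read off the conductor place by place from Proposition~\ref{prop.badred}, using that $t^{2d}+16$ is squarefree to get $\sum_{v\in M_d}\deg v=2d$. The paper's argument is more terse and does not spell out the conductor exponents; your added justification via Ogg's formula is welcome and, since that formula is valid for $p\geq 3$ (not only $p\neq 2,3$), it covers the residue characteristic~$3$ case that your phrasing momentarily sidesteps.
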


\begin{proof} When $d$ is coprime to $p$, 
the result directly follows from the Proposition above and from the observation that one has  
$\sum_{v\in M_d} \deg v = \deg(t^{2d}+16) = 2d$ because $t^{2d}+16$ is squarefree in $\F_q[t]$.
The case of a general~$d$ reduces to this first case: indeed, $E_d$ and $E_{d'}$ have the same conductor since they are isogenous \emph{via} the $p^e$th Frobenius isogeny (see Remark \ref{rema.alld}).
\ProofEnd\end{proof}

\begin{rema}\label{rem.minimod} 
The application of Tate's algorithm at each place $v$ of $K$ also provides us with a minimal $v$-integral model of $E_d$. 
  From the proof of the above Proposition and the expression of the discriminant of~\eqref{eq.Wmodel}, one already deduces that the model \eqref{eq.Wmodel} is integral and minimal at all places $v\neq 0, \infty$ of $K$.

At $v=0$, the model \eqref{eq.Wmodel} is integral but is never minimal: Tate's algorithm yields that 
\begin{itemize}
\item When $d$ is even, a minimal model of $E_d$ at $v=0$ is given by $y^2=x(x^2+t^d\cdot x-4)$.
\item When $d$ is odd, a minimal model of $E_d$ at $v=0$ is given by $y^2=x(x^2+t^{d+1}\cdot x - 4 t^2)$.
\end{itemize}
\end{rema}

\subsection{Some rational points on $E_d$}
When one looks for nontrivial $K$-rational points on $E_d$, one quickly finds at least two: $P_0=(0,0)$ and $P_d=(2t^d, 2t^{2d})$ on the model \eqref{eq.Wmodel}.
In this subsection, we show that $P_0$ is a $2$-torsion point (and is the only nontrivial such one) and that $P_d$ has infinite order.

\begin{theo} \label{theo.tors}
Let $d\geq 1$ be an integer. 
Then 
$ E_d(K)\tors \simeq \Z/2\Z$.
More precisely, $E_d(K)\tors$ is generated by $P_0=(0,0)$.
\end{theo}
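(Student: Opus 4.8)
The plan is to determine the full torsion subgroup by combining a reduction argument with control of the $2$-torsion. First I would observe that $P_0 = (0,0)$ is visibly a point of order $2$ on the model \eqref{eq.Wmodel}, since its $y$-coordinate vanishes. To see it is the \emph{only} nontrivial $2$-torsion point rational over $K$, note that the other two points of order $2$ have $x$-coordinates equal to the roots of $x^2 + t^{2d}x - 4t^{2d}$; this quadratic has discriminant $t^{4d} + 16 t^{2d} = t^{2d}(t^{2d}+16)$, which is not a square in $K = \F_q(t)$ because $t^{2d}+16$ is squarefree of even degree and $t^{2d}$ already contributes the square factor, so $t^{2d}(t^{2d}+16)$ is squarefree up to a square, hence a nonsquare in $\F_q(t)$ (one checks the parity/squarefree part carefully, possibly splitting on whether $-16$ is a square in $\F_q$). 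Therefore $E_d(K)[2] = \{O, P_0\} \simeq \Z/2\Z$, and in particular $E_d(K)\tors$ has even order with a cyclic $2$-part of order exactly $2$ (there is no full $2$-torsion, so no $\Z/2\times\Z/2$).

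Next I would bound the prime-to-$2$ part. Fix any place $v$ of good reduction for $E_d$ — by Proposition \ref{prop.badred} there are infinitely many, e.g. any $v \notin S = \{0\}\cup M_d\cup\{\infty\}$ — with residue field $\F_{q^{\deg v}}$. Since $E_d$ is defined over $K$ and $\mathrm{char}\,\F_q = p \geq 3$, the reduction map $E_d(K)\tors \to \tilde E_{d,v}(\F_{q^{\deg v}})$ is injective on the prime-to-$p$ part of the torsion (this is the standard good-reduction injectivity, \cite[Chap. VII]{AEC}). Choosing two places $v_1, v_2$ of good reduction with $\deg v_1 = 1$ and coprime point counts, or more simply invoking that $\#\tilde E_{d,v}(\F_{q^{\deg v}}) \leq q^{\deg v} + 1 + 2\sqrt{q^{\deg v}}$ by Hasse and taking $v$ of small degree, pins down the possible order of the prime-to-$p$ torsion to a small set; comparing the reductions at two independent such places forces the prime-to-$p$ torsion to be trivial except for the factor $\Z/2\Z$ already found.

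The one remaining subtlety is the $p$-part of the torsion. Here I would argue that $E_d$ is ordinary at some (in fact all but finitely many) places — or alternatively invoke that a nontrivial $p$-torsion point over $K = \F_q(t)$ would have to be defined over the constant field after an unramified base change, contradicting nonisotriviality of $E_d$ (which was noted in the excerpt: $j(E_d)$ is nonconstant). More concretely, $E_d(K)[p^\infty]$ injects into $E_d(\bar K)[p^\infty] \simeq \Z/p^\infty\Z$ or $0$; if it were nonzero, the curve would acquire a rational $p$-torsion point, and pairing this with the behaviour of the formal group at a place of additive/multiplicative reduction, or with Igusa's theorem on the $p$-torsion field, yields a contradiction with the nonconstancy of the $j$-invariant. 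Assembling the three parts: the $2$-part is exactly $\Z/2\Z$, the prime-to-$2p$ part is trivial, and the $p$-part is trivial, so $E_d(K)\tors \simeq \Z/2\Z$ generated by $P_0$.

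The main obstacle I anticipate is the clean verification that $t^{2d}(t^{2d}+16)$ is a nonsquare in $\F_q(t)$ uniformly in $d$ and $q$ — one must handle the case $q \equiv 1 \pmod{\cdot}$ where $-16$ is a square in $\F_q$, so that $t^{2d}+16$ factors, and confirm the squarefree part is still nontrivial — together with making sure the good-reduction reduction argument is set up at a place whose degree one can control independently of $d$. Both are elementary but require care; everything else is routine.
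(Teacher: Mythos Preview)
Your three-way split ($2$-part, $p$-part, prime-to-$2p$ part) is reasonable, and the $p$-part argument via nonconstancy of $j$ is essentially what the paper does (it cites the fact that $j(E_d)$ is not a $p$th power to kill $E_d(K)[p^\infty]$). But the other two pieces have genuine gaps.

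First, computing $E_d(K)[2]=\{\O,P_0\}$ only tells you that the $2$-Sylow subgroup of $E_d(K)\tors$ is \emph{cyclic}; it does not tell you it has order exactly~$2$. Your sentence ``a cyclic $2$-part of order exactly $2$'' is an overclaim: nothing you have written excludes a $K$-rational point of order~$4$. The paper deals with this explicitly. For $d$ odd it uses the embedding of the prime-to-$p$ torsion into the component group $(\Z/2\Z)^2$ at the place of additive reduction (via \cite{SchShio}), which kills $4$-torsion at once. For $d$ even there is no additive place, so the paper instead writes down the division polynomial $\psi_4$ on a minimal integral model and checks by an elementary degree argument in $\F_q[t]$ that it has no $K$-rational zero.

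Second, your reduction argument for the odd prime-to-$p$ part is not a proof as stated. ``Choose two good places with coprime point counts'' is exactly the thing that needs to be established: for every odd prime $\ell\neq p$ and every $d$ you would need a good place $v$ with $\ell\nmid|\widetilde{E}_{d,v}(\F_v)|$, and the degree-$1$ reductions available to you are curves $y^2=x(x^2+zx-4z)$ with $z$ ranging only over the image of $\tau\mapsto\tau^{2d}$ in $\F_q^\times$, a set that depends on $d$. Calling this ``elementary but requires care'' is optimistic. The paper takes a different route: it embeds the torsion into the product of component groups of the N\'eron model (again \cite{SchShio}), deduces that $E_d(K)\tors$ is cyclic of order dividing $4d$, combines this with a uniform bound $|E_d(K)\tors|\leq 12$, and then for $d$ even finishes by checking the division polynomials $\psi_3,\psi_4,\psi_5$ have no $K$-rational zeros via the same integrality-plus-degree trick. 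This is more structural than your proposed approach and, crucially, is uniform in $d$.
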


\begin{proof} 
For the duration of the proof, we denote the torsion subgroup of $E_d(K)$ by $G$. 
Since the $j$-invariant of $E_d$ is not a~$p$th power in $K$, 
$E_d(K)$ has no torsion points of order a power of~$p$  by Proposition~7.1 in \cite[Lect. 1]{UlmerParkCity}; in other words, the $p$-primary part $G[p^\infty]$ of $G$ is trivial.
We can also describe the $2$-torsion in $G$ explicitly.
In the homogenised version of the model \eqref{eq.Wmodel}, the four $\bar{K}$-rational $2$-torsion points on $E_d$ are given by 
$[0:1:0]=\O$, $[0:0:1]=P_0$, 
$[-t^d(t^{d}+\sqrt{t^d+16})/{2}:0:1]$ and $[-t^d(t^{d}-\sqrt{t^d+16})/{2}:0:1]$. 
Clearly, only the first two are $K$-rational (since $t^d+16\in\F_q[t]$ is squarefree), hence $G[2] = \{\O, P_0\}$.

For any place $v$ of $K$, we let $\Gamma_v$ denote the group of components of the special fibre at $v$ of the N\'eron model of $E_d$ (so that $\Gamma_v=\{1\}$ for all but finitely many places). 
Our proof of Proposition \ref{prop.badred} and the table in \cite[\S 7.2]{SchShio} provide explicit descriptions of the groups $\Gamma_v$ for $E_d$.

\medskip

When $d$ is odd, the curve $E_d$ has additive reduction at $\infty$: Lemma 7.8 in \cite{SchShio} shows that the prime-to-$p$ part of $G$ embeds into $\Gamma_\infty\simeq(\Z/2\Z)^2$. 
We thus infer that the whole of $G$ is $2$-torsion, so that $G=G[2]=\{\O, P_0\}$ by the computation in the previous paragraph. 
This complete the proof in this case.

 \medskip

We now turn to the case when $d$ is even. 
Corollary 7.5 in \cite{SchShio} shows that $G$ embeds as a subgroup of $\Gamma:=\prod_{v} \Gamma_v$. 
More explicitly, this result yields that $G$ is isomorphic to a subgroup of $\Z/4d\Z$. 
In particular, $G$ has to be cyclic, and we let $g$ denote its order. 
Upon carrying out an explicit version of the proof of Proposition 7.1 in \cite[Lect. 1]{UlmerParkCity}, one can show that $1\leq g\leq 12$ (see the proof of \cite[Prop. 1.5]{Griffon_Hessian}).
Given the previous constraints on $g$, we obtain that $g\in\{2, 4, 6, 8, 10, 12\}$: hence we will be done once we have checked that $E_d(K)$ contains no nontrivial $3$-torsion,  $4$-torsion or $5$-torsion. 

We carry out this last step by considering \emph{division polynomials}: for any $n\geq 1$, there is a rational map $\psi_n$ on $E_d$ (which can be explicitly given  in terms of Weierstrass coefficients of $E_d$) with the following property. 
The zeroes of $\psi_n$ are simple and they are exactly the nontrivial $n$-torsion points on $E_d$, and $\psi_n$ has a pole of the appropriate order at the neutral element $\O$. 
The reader is referred to \cite[Chap. III, Ex. 3.7]{AEC} for more details.

First we note that, when~$d$ is even, the Weierstrass equation $W' : y^2=x(x^2+t^dx-4)$ is a minimal integral model of $E_d$ at  all finite places of $K$ (by `scaling' \eqref{eq.Wmodel} \emph{via} $(x,y)\mapsto (t^dx, t^{3d/2}y)$). 
The formulae in \cite[Chap. III, Ex. 3.7]{AEC} here read: 
\begin{align*}
\psi_3 
&= 3x^4+4t^{d}\cdot x^3-24\cdot x^2-16, \\
\psi_4 
&= 4y\big(x^6 +2t^d\cdot x^5 -20\cdot x^4 -80\cdot x^2 -32t^d\cdot x+64\big), \\
\psi_5&= 
5x^{12} +20t^{d}\cdot x^{11} +8t^{d}(2t^{d}-31) \cdot x^{10} -320t^{d}\cdot x^9 -1680\cdot x^8 -5760t^{d}\cdot x^7 -3840(t^{2d}-5)\cdot x^6	\\
&\qquad\qquad\qquad
-1024t^{d}(t^{2d}-23)\cdot x^5 +1280(t^{2d}-3125)\cdot x^4 -35840t^{d}\cdot x^3 +51200\cdot x^2 +4096.
\end{align*}
Therefore, to conclude our proof, it remains to show that, for all $P=(x_P, y_P)\in E_d(K)\smallsetminus\{\O\}$ 
one has $\psi_n(x_P, y_P)\neq 0$ for $n\in\{3, 4, 5\}$.

Assume that there exists a nontrivial $3$-torsion point $P=(x_P, y_P)\in E_d({K})$. 
By construction of the polynomial $\psi_3$, we have $\psi_3(P)=0$ \ie{}, we have
\[x_P^4 +\tfrac{4}{3}t^{d}\cdot x_P^3 - 8\cdot x_P^2 - \tfrac{16}{3}=0.\]
In particular, $x_P\in{\F_q(t)}$ is integral over $\F_q[t]$ and we obtain that $x_P\in\F_q[t]$. 
Now, the equation above can be rewritten as:
$ x_P^2\cdot \left(3x_P^2+4t^{d}\cdot x_P - 24\right) = 16$.
For degree reasons, the polynomials $x_P$ and $x_P^2+4t^{d}x_P-24$ must both be constant and nonzero, which is impossible. 
This contradiction shows that $E_d(K)$ has no nontrivial $3$-torsion.
The same argument also excludes $5$-torsion: 
if there were a nontrivial $5$-torsion point $P=(x_P, y_P)\in E_d(K)$, then $x_P$ would be in $\F_q[t]$ and would satisfy 
\[ \big(5x_P^{10}
+20t^{d}\cdot x_P^{9}
+8t^{d}(2t^{d}-31) \cdot x_P^8
+\dots 
-35840t^{d}\cdot x_P
+51200\big)\cdot x_P^2=-4096.\]
Such an equality is never satisfied, so that $E_d(K)[5]=\{\O\}$.
Finally, assume that there is a point $P=(x_P, y_P)\in E_d({K})$ of exact order $4$. 
By construction $\psi_4(P)=0$ but, since $P$ is not $2$-torsion, one has $y_P\neq 0$. 
By the same argument as above, $x_P$ is an element of $\F_q[t]$ and is a solution of 
\[\big(x_P^5 +2t^d\cdot x_P^4 -20\cdot x_P^3 -80\cdot x_P -32t^d \big)\cdot x_P = -64.\]
Since no element of $\F_q[t]$ can satisfy such a relation, $P$ cannot exist and we have shown that $G[4]=\{\O\}$.
This concludes the proof that $G=\{\O, P_0\}$ in the case when $d$ is even. 
\ProofEnd\end{proof}

\begin{coro}\label{coro.positive.rk}
For any integer $d\geq1$, the point $P_d=(2t^d, 2t^{2d})\in E_d(K)$ has infinite order.
In particular, the elliptic curve $E_d$ has positive rank. 
\end{coro}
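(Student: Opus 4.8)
The plan is to read this off immediately from the description of the torsion subgroup established in Theorem \ref{theo.tors}. First I would check, by direct substitution into the model \eqref{eq.Wmodel}, that $P_d=(2t^d,2t^{2d})$ really is a point of $E_d(K)$: plugging $x=2t^d$ into the right-hand side gives
\[ x\cdot\big(x^2+t^{2d}\cdot x-4t^{2d}\big) = 2t^d\big(4t^{2d}+2t^{3d}-4t^{2d}\big) = 4t^{4d} = (2t^{2d})^2, \]
so indeed $P_d\in E_d(K)$, and it is plainly an affine point, hence $P_d\neq\O$.

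Next I would observe that $P_d$ is distinct from the unique nontrivial torsion point exhibited earlier: the $x$-coordinate of $P_d$ is $2t^d$, which is nonzero in $K$ since $p\geq 3$ and $d\geq 1$, whereas the $x$-coordinate of $P_0=(0,0)$ is $0$; thus $P_d\neq P_0$. By Theorem \ref{theo.tors} we have $E_d(K)\tors=\{\O,P_0\}$, so $P_d\notin E_d(K)\tors$, which means $P_d$ has infinite order. In particular $\rk\big(E_d(K)\big)\geq 1>0$, which is the assertion of the corollary.

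There is essentially no obstacle here: the entire content of the statement is carried by the computation of the torsion subgroup in Theorem \ref{theo.tors}, and the rest is a one-line verification. For completeness I would note that one could instead establish infinite order of $P_d$ without the full torsion computation --- for instance by specializing $t$ to a well-chosen element of $\overline{\F_q}$ and checking that the image of $P_d$ on the resulting elliptic curve over a finite field has large order, or by computing that the canonical height $\hat h(P_d)$ is strictly positive --- but the argument above via Theorem \ref{theo.tors} is by far the most economical and is the one I would present.
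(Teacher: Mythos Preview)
Your proof is correct and follows exactly the same approach as the paper: use Theorem~\ref{theo.tors} to reduce to checking $P_d\neq P_0$ (and $P_d\neq\O$). Your version is simply more detailed, spelling out the verification that $P_d$ lies on $E_d$ and the comparison of $x$-coordinates, whereas the paper dismisses this as ``obvious''.
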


\begin{proof} 
Given the previous Theorem, it suffices to check that $P_d\neq P_0$, which is obvious. 
\ProofEnd\end{proof}

 \subsection{Ulmer's theorem on unbounded ranks}
 \label{sec.Ulmer}

Let $\F_q$ be a finite field of characteristic $p\geq 3$ and $K=\F_q(t)=\F_q(\P^1)$. 
For any elliptic curve $E/K$, we denote by $\cond^f_{E/K}$ be the part of the conductor $\cond_{E/K}\in\mathrm{Div}(\P^1)$ that is prime to $0$ and $\infty$. 

Given a nonisotrivial elliptic curve $E'$ over $K$, one can construct as follows a sequence $\{E'_d\}_{d\geq 1}$ of elliptic curves over $K$, 
which we call the \emph{Kummer family built from $E'$}. 
For any integer $d\geq 1$, let $E'_d/K$ be the pullback of $E'/K$ under the Kummer map $\P^1\to\P^1$ given by $t\mapsto t^d$. 
In down-to-earth words: given a Weierstrass model $\mathcal{W}$ of $E'/K$, define $E'_d$ by the Weiestrass model  obtained by replacing each occurence of $t$ in $\mathcal{W}$ by $t^d$.

 \begin{theo}[Ulmer]\label{theo.Ulmer}
 Let $E'/K$ be a nonisotrivial elliptic curve: 
we assume that the degree of $\cond^f_{E'/K}$ is \emph{odd} and we denote by $\{E'_d\}_{d\geq 1}$ be the Kummer family built from $E'$. 
Then the sequence of analytic ranks $(\ord_{T=q^{-1}} L(E'_d/K, T))_{d\geq 1}$ is unbounded. 
Moreover,  for $d_n:=q^n+1$ where $n\geq 1$ is an integer, one has
 \[\ord_{T=q^{-1}}L(E'_{d_n}/K, T) 
 \geq \frac{q^n+1}{2n} - C, \]
 where the constant $C$ depends at most on $q$ and $E'$.
  \end{theo}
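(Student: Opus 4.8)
The plan is to deduce Ulmer's theorem from the foundational results in \cite{Ulmer_largeLrank} (or \cite{Ulmer_largeLrank} via \cite{Ulmer_MWJacobians}) about $L$-functions of Kummer pullbacks. First I would recall the behaviour of the $L$-function under base change by $t\mapsto t^d$: if $K_d:=\F_q(u)$ with $u^d=t$, then $E'_d/K$ and $E'/K_d$ are closely related, and the $L$-function $L(E'_d/K,T)$ factors as a product of twists $L(E'\otimes\chi/K,T)$ over characters $\chi$ of $\mu_d$ (equivalently, over the Galois group of $K_d/K$). This is the standard Kummer-family decomposition; I would cite the relevant statement in \cite{Ulmer_largeLrank} rather than reprove it. The order of vanishing at the central point $T=q^{-1}$ is then the sum of the orders of vanishing of these twisted $L$-functions, so it suffices to produce many twists whose $L$-function vanishes at the center.

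The key mechanism is a \emph{parity} argument: each twisted $L$-function $L(E'\otimes\chi/K,T)$ is (by the Grothendieck--Lefschetz trace formula and Deligne's purity) a polynomial in $T$ satisfying a functional equation relating $T$ and $1/(q^2T)$, with a sign $\varepsilon(\chi)\in\{\pm 1\}$; when $\varepsilon(\chi)=-1$, the polynomial vanishes at $T=q^{-1}$ and contributes at least $1$ to the analytic rank. So the heart of the proof is to count the characters $\chi$ of order dividing $d$ for which the local root numbers multiply to $-1$. The local root numbers are controlled by the reduction type of $E'$ at $0$, at $\infty$, and at the places in $\cond^f_{E'/K}$; the hypothesis that $\deg\cond^f_{E'/K}$ is \emph{odd} is exactly what forces, for a positive proportion of $\chi$, the global sign to be $-1$. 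Taking $d=d_n:=q^n+1$ makes the set of relevant characters especially large and symmetric (the multiplicative group $\mu_{d_n}$ interacts nicely with $\F_{q^{2}}$, since $d_n\mid q^{2n}-1$), and a direct count of the bad-sign characters yields at least $(q^n+1)/(2n)-C$ of them, with $C$ absorbing the bounded contribution of the finitely many ramified places and the finitely many $\chi$ where the local analysis degenerates.

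Concretely, the steps in order are: (1) set up the factorisation $L(E'_d/K,T)=\prod_{\chi}L(E'\otimes\chi/K,T)$ over characters $\chi\colon\mu_d\to\Qbar^\times$, citing \cite{Ulmer_largeLrank}; (2) record the functional equation and purity for each factor, so that $\varepsilon(\chi)=-1$ implies $\ord_{T=q^{-1}}L(E'\otimes\chi/K,T)\ge 1$; (3) compute $\varepsilon(\chi)$ as a product of local root numbers, identifying the contribution of each bad place of $E'$ and isolating the parity coming from $\deg\cond^f_{E'/K}$; (4) specialise to $d=d_n=q^n+1$ and count the $\chi$ with $\varepsilon(\chi)=-1$, getting $\ge (q^n+1)/(2n)-C$; (5) sum over those $\chi$ to conclude. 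I expect step (3)–(4), the explicit local root number computation and the counting of bad-sign characters, to be the main obstacle: the local $\varepsilon$-factors at wildly or additively ramified places, and at the place above $0$ and $\infty$ where the Kummer cover itself ramifies, require care, and one must check that all but $O_{q,E'}(1)$ characters fall into the clean case so that the error is a genuine constant. Since the statement is attributed to Ulmer, in practice I would present this as a citation-driven deduction, reproducing only the parity bookkeeping needed to see where $d_n=q^n+1$ and the odd-degree hypothesis enter.
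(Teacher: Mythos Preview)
The paper does not actually prove this theorem: it is stated as a result of Ulmer, attributed to \cite{Ulmer_largeLrank}, and invoked only to explain why it does \emph{not} apply to the family $\{E_d\}_{d\geq 1}$ under study (since $\deg\cond^f_{E_1/K}$ is even). There is therefore no ``paper's own proof'' to compare your proposal against.

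That said, your sketch is a reasonable outline of the argument Ulmer gives in \cite{Ulmer_largeLrank}: the decomposition of $L(E'_d/K,T)$ into twisted factors, the functional equation and the parity of the root number, and the counting of characters $\chi$ with $\varepsilon(\chi)=-1$ when $d=q^n+1$. You correctly identify that the odd-degree hypothesis on $\cond^f_{E'/K}$ is what forces the sign to be $-1$ for many $\chi$, and that the constant $C$ absorbs the finitely many degenerate characters. One minor point: the factor $1/(2n)$ in the lower bound arises because characters $\chi$ of order dividing $q^n+1$ are grouped into $q$-orbits of length dividing $2n$ (since $q$ has order $2n$ modulo $q^n+1$), and the functional-equation sign is constant along each orbit; you might make that orbit structure explicit in step~(4). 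For the purposes of this paper, however, a bare citation to \cite{Ulmer_largeLrank} is all that is used.
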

 
The sequence $\{E_d\}_{d\geq 1}$ of elliptic curves $E_d$ defined by \eqref{eq.Wmodel} is an example of a Kummer family: it is the one built from $E_1 : y^2=x(x^2+t^2x-4t^2)$.
As was noted in \cite[\S4.4]{Berger}, the Theorem above \emph{does not} apply to the Kummer family at hand, because $\deg\cond^f_1=\deg\cond^f_{E_1/K}$ is even (see Corollary \ref{coro.deg.cond}).


\section{Characters of order dividing $2d$ and some character sums}
\label{sec.prelim}

The goal of the next section will be to give an explicit expression for the $L$-function of $E_d$: the result will be expressed  in terms of certain character sums, which we start by introducing and relating to products of some Jacobi sums.

\subsection{Jacobi sums} 
\label{sec.jacobi}

Let $\F$ be a finite field of odd characteristic. 
There is a unique nontrivial character of order $2$ on $\F^\times$, which we denote by%
\footnote{or simply by $\lambda$ when no confusion is likely.}
 $\lambda_\F:\F^\times\to\{\pm1\}$.
We follow the convention that nontrivial multiplicative characters $\chi:\F^\times\to\Qbar^\times$ are extended to the whole of $\F$ by $\chi(0):=0$, 
and the trivial character $\trivcar$ by $\trivcar(0):=1$.
To any pair of multiplicative characters $\chi_1, \chi_2$ on $\F^\times$, we associate a Jacobi sum
\[\ja_{\F}(\chi_1, \chi_2) := -\sum_{\substack{ x_1, x_2 \in\F \\ x_1+ x_2=1}} \chi_1(x_1)\chi_2(x_2).\]
We refer to \cite[Chap. II, \S5.3 - \S5.4]{Cohen} for more details about these sums. For the convenience of the reader, we recall the following facts: 
\newcommand{\refJac}[1]{\hyperref[#1]{\textsf{(Jac\,\ref*{#1})}}}
\begin{enumerate}[\sf({Jac}\,1)]
\item\label{eq.jac.3} 
	For any nontrivial character $\chi$, 
	one has $	\ja_\F(\trivcar, \chi)=0$.
\item\label{eq.jac.2}
	For any nontrivial character $\chi$ such that $\chi\neq\lambda$, 
	 one has $\ja_\F(\chi, \lambda)=\chi(4)\cdot\ja_\F(\chi, \chi)$.
\item\label{eq.jac.1}
	One has $	\ja_\F(\lambda, \lambda)=\lambda(-1)$.
\item\label{eq.HD.jac}	
	For any characters $\chi_1, \chi_2$ on $\F^\times$ and any finite extension $\F'/\F$, one has 
	\[\ja_{\F'} \big(\chi_1\circ\norm_{\F'/\F}, \chi_2\circ\norm_{\F'/\F} \big) =  \ja_\F(\chi_1, \chi_2) ^{[\F':\F]},\]
	where $\norm_{\F'/\F}:\F'\to\F$ denotes the relative norm of the extension $\F'/\F$.
	This identity is often called the Hasse--Davenport relation for Jacobi sums (see \cite[Coro III.5.7]{Cohen}).
\item\label{eq.jac.absval}
	 For any characters $\chi_1, \chi_2$ on $\F^\times$  such that none of $\chi_1$, $\chi_2$,  and $\chi_1\chi_2$ is trivial, one has 
	 \[|\ja_{\F}(\chi_1, \chi_2)| = |\F|^{1/2},\] 
	 in any complex embedding of $\Qbar$.
\end{enumerate}
 
 \subsection{Two identities with character sums}
\label{sec.identity}

In this subsection, $\F$ denotes a finite field of odd characteristic. 
For any multiplicative character $\chi:\F^\times\to\Qbar^\times$, we consider the double character sum
\begin{equation}\label{eq.doublecharsum}
B(\F,\chi) := \sum_{x\in\F}\sum_{z\in\F^\times}\chi(z)\cdot \lambda\left(x^3+x^2z-4xz\right).	
\end{equation}

Let us first prove the following:
\begin{prop}\label{prop.charsum.id}
Let $\chi:\F^\times\to\Qbar^\times$ be a multiplicative character. Then
\begin{equation}\notag{}
B(\F,\chi) = \begin{cases}
|\F| & \text{ if } \chi=\trivcar \text{ is trivial},\\
1 & \text{ if } \chi=\lambda \text{ has order }2,\\
\ja_\F(\chi, \chi) & \text{ if } \chi  \text{ has order $4$},\\
\chi^2(4)\cdot \ja_\F(\chi,\chi)\cdot\ja_\F(\lambda\chi^2, \bar{\chi})& \text{ otherwise.}
\end{cases}
\end{equation}
\end{prop}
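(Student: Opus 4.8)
The plan is to evaluate the double sum $B(\F,\chi)=\sum_{x\in\F}\sum_{z\in\F^\times}\chi(z)\lambda(x^3+x^2z-4xz)$ by massaging the inner cubic into a shape where the sum over $x$ becomes recognizable. First I would factor out $x$: writing $x^3+x^2z-4xz = x\cdot(x^2+xz-4z)$, the term $x=0$ contributes nothing, so we may restrict to $x\in\F^\times$ and use multiplicativity of $\lambda$ to get $\lambda(x)\lambda(x^2+xz-4z)$. The natural move is then the substitution $x\mapsto xz$ (valid since $z\neq0$), which turns the argument into $z^2(x^2+x-4/z)$... actually a cleaner normalization is $x\mapsto zx$ giving $x^3 z^3 + x^2 z^3 - 4 x z^2 = z^2\cdot x\cdot(z x^2 + z x - 4)$; I would play with such scalings until the $z$-dependence is isolated as a single monomial times $\chi$ and a factor depending on $\lambda(z)$, leaving an inner sum over $x$ of the form $\sum_x \lambda(\text{quadratic in }x)$ or a sum that telescopes into a Jacobi sum after a further change of variable.

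**Key steps.**
Concretely I expect the computation to proceed as follows. After the scaling and pulling out $\lambda$-factors, the inner sum over $x$ of $\lambda$ of a quadratic $ax^2+bx+c$ is classical: it equals $-\lambda(a)$ if the discriminant is nonzero and $(|\F|-1)\lambda(a)$ if it vanishes. The discriminant will be a polynomial in $z$; its vanishing picks out finitely many special values of $z$, which is exactly what produces the split into cases. For the generic contribution, one is left with a single sum $\sum_z \chi(z)\lambda(\text{linear or quadratic in }z)$, which I would recognize as (a multiple of) a Jacobi sum $\ja_\F(\chi,\chi)$ or $\ja_\F(\lambda\chi^2,\bar\chi)$ after an affine change of variable bringing it to the defining shape $\sum_{u+v=1}\chi_1(u)\chi_2(v)$. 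At that point the facts \refJac{eq.jac.3}, \refJac{eq.jac.2}, \refJac{eq.jac.1} handle the degenerate characters: $\chi=\trivcar$ forces the $z$-sum to count something of size $|\F|$; $\chi=\lambda$ collapses via $\ja_\F(\lambda,\lambda)=\lambda(-1)$ together with the exceptional $z$-contributions to give $1$; and $\chi$ of order $4$ is the borderline case where $\lambda\chi^2=\trivcar$, so the second Jacobi factor degenerates by \refJac{eq.jac.3} and only $\ja_\F(\chi,\chi)$ survives.

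**Main obstacle.**
The routine part is the quadratic Gauss-sum evaluation; the delicate part is bookkeeping the exceptional values of $z$ (where the quadratic in $x$ degenerates) and checking that their contributions combine correctly with the generic term to yield exactly the clean closed forms — in particular getting the constant $1$ in the $\chi=\lambda$ case and confirming that for $\chi$ of order $4$ the would-be extra Jacobi factor is precisely the one killed by triviality of $\lambda\chi^2$. I would also need to be careful that the substitutions used (scaling by $z$, completing the square, the affine shift into Jacobi-sum form) remain bijective on $\F^\times$ or $\F$ as appropriate, and track the resulting $\chi$- and $\lambda$-prefactors — this is where the $\chi^2(4)$ factor in the general case will emerge, presumably via an application of \refJac{eq.jac.2} of the form $\ja_\F(\chi,\lambda)=\chi(4)\ja_\F(\chi,\chi)$ used in reverse. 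Once the prefactors are pinned down, assembling the four cases is immediate.
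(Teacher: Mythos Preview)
Your plan has a genuine gap at the step where you invoke the classical evaluation of $\sum_{x}\lambda(ax^2+bx+c)$. After factoring $x^3+x^2z-4xz=x\cdot(x^2+zx-4z)$ and pulling out $\lambda(x)$, the inner $x$-sum is $\sum_{x}\lambda(x)\,\lambda(x^2+zx-4z)$, which is $\lambda$ of a \emph{cubic} in $x$ --- essentially a point count on the elliptic curve $y^2=x(x^2+zx-4z)$ --- and no rescaling $x\mapsto xz$, $x\mapsto x/z$, etc., removes that extra $\lambda(x)$ factor. The quadratic formula you want simply does not apply here, and this is precisely the sum one is trying to understand in the first place.

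The observation that makes the computation work is different, and in a sense dual to yours: after factoring out $x$, the remaining factor $x^2+zx-4z=(x-4)z+x^2$ is \emph{linear in $z$}. So one keeps the $z$-sum as the inner sum and evaluates, for each fixed $x\neq 0,4$,
\[
\sum_{z\neq 0}\chi(z)\,\lambda\big((x-4)z+x^2\big)
= -\chi(-1)\,\lambda(A_x)\chi(A_x)\cdot\ja_\F(\chi,\lambda),
\qquad A_x:=\frac{x^2}{x-4},
\]
by an affine change of variable in $z$ (the case $x=4$ is handled separately and only contributes when $\chi=\trivcar$). This already produces one Jacobi factor $\ja_\F(\chi,\lambda)$, and leaves a \emph{single} remaining $x$-sum $B'(\F,\chi)=\sum_{x\neq 0,4}(\lambda\chi^2)(x)\,\bar\chi(x-4)$, which is then recognized (after the shift $x=4x'$) as $-\chi(-4)\cdot\ja_\F(\lambda\chi^2,\bar\chi)$ in the generic case. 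The factor $\chi^2(4)$ comes from combining the $\chi(4)$ produced by this shift with the conversion $\ja_\F(\chi,\lambda)=\chi(4)\ja_\F(\chi,\chi)$ from \refJac{eq.jac.2}. The four cases then fall out exactly according to whether $\chi$, $\chi^2$, or $\lambda\chi^2$ is trivial, with the identity $\sum_x\lambda(x(x-4))=-1$ used to close the $\chi=\trivcar$ and $\chi=\lambda$ cases. In short: sum over $z$ first (linearity), not over $x$ (cubic).
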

Note that characters $\chi:\F^\times\to\Qbar^\times$ of exact order $4$ exist if and only if $|\F|\equiv 1\bmod{4}$ (by cyclicity of the group $\F^\times$). 
If this is the case, there are exactly two such characters. 

\begin{proof} 
Notice that the terms with $x=0$ do not contribute to the sum \eqref{eq.doublecharsum} because $\lambda(0)=0$, therefore
\[B(\F,\chi) = \sum_{x\in\F^\times} \lambda(x)\left(\sum_{z\in\F^\times}\chi(z)\cdot \lambda\left((x-4)z+x^2\right)\right).\]
For a given $x\neq 0$, consider the inner sum in the above equality: if $x=4$, then
\[\sum_{z\in\F^\times}\chi(z)\cdot \lambda\left((x-4)z+x^2\right) 
= \lambda(4)^2\cdot \sum_{z\neq 0} \chi(z) = \begin{cases}
 |\F|-1 & \text{ if } \chi=\trivcar,	 \\
0 & \text{ otherwise.}
 \end{cases} \]	
 If $x\neq 4$, we let $A_x:=x^2/(x-4)$ and we obtain that
 \[\sum_{z\in\F^\times}\chi(z)\cdot \lambda\left((x-4)z+x^2\right)  = \lambda(x-4)\cdot \sum_{z\neq 0} \chi(z)\lambda(z+A_x).\]
If $\chi=\trivcar$, it is not hard to check that $\sum_{z\neq 0} \chi(z)\lambda(z+A_x) = -1$. 
If $\chi$ is nontrivial, we can add the term $z=0$ (which does not contribute) and reindex the sum by letting $(u_1, u_2) = (-z/A_x, 1+z/A_x)$. 
This leads~to
\[\sum_{z\neq 0} \chi(z)\lambda(z+A_x) = \chi(-A_x)\lambda(A_x)\cdot\sum_{\substack{u_i\in\F \\ u_1+u_2=1}} \chi(u_1)\lambda(u_2)
= - \chi(-1)\lambda(A_x)\chi\left(A_x\right)
\cdot\ja_{\F}(\chi, \lambda). \]
Besides, one can show that $\sum_{x}\lambda(x(x-4))=-1$ (see \cite[Theorem 5.48]{LidlN}). 
If $\chi$ is trivial, this identity and what we have just proved yield
\[B(\F,\trivcar) = -\sum_{x\neq 0, 4} \lambda(x)\lambda(x-4) + \lambda(4)\cdot(|\F|-1) 
= 1 + |\F|-1 = |\F|,\]
as was to be shown. 
We now assume that $\chi\neq\trivcar$: using the expression of $A_x$ and the fact that $\lambda(x^2)=\lambda(x-4)^2=1$ for all $x\neq 0, 4$,  we have obtained thus far that 
\begin{equation}\label{eq.char.id.inter1}
B(\F,\chi) = -\chi(-1)\cdot\ja_\F(\chi, \lambda)\cdot B'(\F,\chi), \quad \text{ with }B'(\F,\chi):=\sum_{x\neq 0, 4} \lambda(x)\chi(x^2)\bar{\chi}(x-4).
\end{equation}
It remains to `compute' $B'(\F,\chi)$, and we distinguish several cases. 
We first assume that $\chi=\lambda$ has order exactly $2$: in this case, one has $\lambda(x)\chi(x^2)\bar{\chi}(x-4)=\lambda(x)\lambda(x-4)$ for all $x\in\F\smallsetminus\{0, 4\}$. 
Hence, $B'(\F, \chi) = \sum_{x\in\F}\lambda\big(x(x-4)\big)=-1$ by the same identity as before.
Moreover, \refJac{eq.jac.1} yields that $\ja_\F(\lambda, \lambda)=\lambda(-1)$ 
so that $B(\F,\chi)=\lambda(-1)^2=1$. 
Next, we assume that $\chi\notin\{\trivcar, \lambda\}$ and that $\lambda\chi^2$ is the trivial character (this happens exactly when $\chi$ has order $4$): 
for all $x\in\F\smallsetminus\{0,4\}$, we now have $\lambda(x)\chi(x^2)\bar{\chi}(x-4)=\bar{\chi}(x-4)$. 
Hence $B'(\F,\chi)=-\bar{\chi}(-4)$ and we have $B(\F,\chi)=\bar{\chi}(4)\cdot \ja_\F(\chi, \lambda)$ which, by \refJac{eq.jac.2}, can be rewritten as $B(\F,\chi)= \ja_\F(\chi, \chi)$.

Finally, we deal with the case where $\chi^4\neq \trivcar$. 
Then $\lambda\chi^2$ is nontrivial and, by setting $x=4x'$, we obtain that
\[B'(\F,\chi)  = \sum_{x\in\F} (\lambda\chi^2)(x)\bar{\chi}(x-4)
= \lambda(4)\cdot\chi(-4)\cdot \sum_{x'\in\F} (\lambda\chi^2)(x')\bar{\chi}(1-x')
= -\chi(-4)\cdot \ja_\F(\lambda\chi^2, \bar{\chi}). \]
Appealing to \refJac{eq.jac.2} once more, we conclude that 
\[B(\F,\chi) = \chi(-1)^2\chi(4)\cdot\ja_\F(\chi, \lambda)\cdot\ja_\F(\lambda\chi^2, \bar{\chi}) 
= \chi(16)\cdot\ja_\F(\chi, \chi)\cdot\ja_\F(\lambda\chi^2, \bar{\chi}). \eqno \Box\]
\end{proof}
 
 From \refJac{eq.HD.jac} and the above Proposition, one directly obtains the following:
 \begin{coro}\label{coro.HD.B}
 Let $\chi:\F^\times\to\Qbar^\times$ be a multiplicative character. For any finite extension $\F'/\F$, one has
 \[B(\F', \chi\circ\norm_{\F'/\F}) =  B(\F, \chi)^{[\F':\F]},\]
 where $\norm_{\F'/\F}:\F'\to\F$ denotes the relative norm of the extension.
\end{coro}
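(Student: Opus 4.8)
The statement to prove is that $B(\F', \chi\circ\norm_{\F'/\F}) = B(\F, \chi)^{[\F':\F]}$ for any finite extension $\F'/\F$ and any multiplicative character $\chi$ on $\F^\times$. The plan is to proceed purely by invoking Proposition \ref{prop.charsum.id} and the Hasse--Davenport relation \refJac{eq.HD.jac}, handling each of the four cases of the piecewise formula for $B$ in turn. The key observations that make this mechanical are: first, the order of $\chi\circ\norm_{\F'/\F}$ equals the order of $\chi$ (the norm map $\norm_{\F'/\F}\colon \F'^\times\to\F^\times$ is surjective, so composing with it does not collapse any part of the character), so the case-split for $\chi$ on $\F$ matches the case-split for $\chi\circ\norm_{\F'/\F}$ on $\F'$; and second, for a constant $c\in\F^\times$ one has $(\chi\circ\norm_{\F'/\F})(c) = \chi\big(\norm_{\F'/\F}(c)\big) = \chi\big(c^{[\F':\F]}\big) = \chi(c)^{[\F':\F]}$, since $\norm_{\F'/\F}(c)=c^{[\F':\F]}$ for $c\in\F$.

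Concretely I would write: set $m=[\F':\F]$ and $\chi'=\chi\circ\norm_{\F'/\F}$. If $\chi=\trivcar$ then $\chi'=\trivcar$ and $B(\F',\trivcar)=|\F'|=|\F|^m=B(\F,\trivcar)^m$. If $\chi=\lambda_\F$ has order $2$, then $\chi'=\lambda_{\F'}$ has order $2$, and $B(\F',\chi')=1=1^m=B(\F,\chi)^m$. If $\chi$ has order $4$, then $\chi'$ has order $4$, and $B(\F',\chi')=\ja_{\F'}(\chi',\chi')=\ja_\F(\chi,\chi)^m=B(\F,\chi)^m$ by \refJac{eq.HD.jac}. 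In the remaining case, $\chi'$ also falls in the ``otherwise'' case, and
\[
B(\F',\chi') = \chi'^2(4)\cdot\ja_{\F'}(\chi',\chi')\cdot\ja_{\F'}(\lambda_{\F'}\chi'^2,\bar{\chi'}).
\]
Here $\chi'^2(4)=\chi^2(4)^m$ by the constant-norm remark, $\ja_{\F'}(\chi',\chi')=\ja_\F(\chi,\chi)^m$ by \refJac{eq.HD.jac}, and for the last factor one notes $\lambda_{\F'}=\lambda_\F\circ\norm_{\F'/\F}$ and $\chi'^2=\chi^2\circ\norm_{\F'/\F}$ and $\bar{\chi'}=\bar\chi\circ\norm_{\F'/\F}$, so that $\lambda_{\F'}\chi'^2=(\lambda_\F\chi^2)\circ\norm_{\F'/\F}$ and \refJac{eq.HD.jac} gives $\ja_{\F'}(\lambda_{\F'}\chi'^2,\bar{\chi'})=\ja_\F(\lambda_\F\chi^2,\bar\chi)^m$. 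Multiplying these three identities yields $B(\F',\chi')=B(\F,\chi)^m$.

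The only genuine point requiring a word of justification — and hence the ``main obstacle'', though it is minor — is the claim that $\lambda_{\F'}$ equals $\lambda_\F\circ\norm_{\F'/\F}$ and, more generally, that the order of $\chi\circ\norm_{\F'/\F}$ equals the order of $\chi$. Both follow from surjectivity of $\norm_{\F'/\F}\colon\F'^\times\to\F^\times$ for finite fields (equivalently, from the fact that this norm map is $x\mapsto x^{1+q+\dots+q^{m-1}}$ and the exponent is coprime to $|\F|-1$ divided by $\gcd$... more simply, surjectivity is standard). Given that, each case is a one-line verification, so the write-up is short.
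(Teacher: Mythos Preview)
Your proposal is correct and follows precisely the approach the paper intends: the paper states only that the corollary follows directly from Proposition~\ref{prop.charsum.id} together with the Hasse--Davenport relation \refJac{eq.HD.jac}, and your case-by-case verification is exactly the way to unpack that sentence. Your remarks on the surjectivity of $\norm_{\F'/\F}$ (ensuring $\chi\circ\norm_{\F'/\F}$ has the same order as $\chi$, and that $\lambda_{\F'}=\lambda_\F\circ\norm_{\F'/\F}$) and on $\chi'(c)=\chi(c)^{[\F':\F]}$ for $c\in\F^\times$ are the right details to make the argument complete.
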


We end this subsection by showing that:

\begin{prop} \label{prop.charsum.id2}
Let $\F$ be a finite field of odd characteristic. 
Then one has
\begin{equation}\label{eq.char.id2}
\sum_{x\in\F}	\lambda\big(x(x^2-4))\big) = - \sum_{\substack{\chi^4=\trivcar \\ \chi^2\neq \trivcar}}   \ja_\F(\chi, \chi),
\end{equation}
where the sum on the right-hand side is over all multiplicative characters of exact order $4$ (hence it  vanishes 
if $|\F|\not\equiv 1\bmod{4}$).
\end{prop}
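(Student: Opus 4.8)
The plan is to evaluate the quadratic character sum $\sum_{x\in\F}\lambda(x(x^2-4))$ directly and to recognize the answer as a sum of Jacobi sums of order-$4$ characters. First I would dispose of the trivial case: if $|\F|\not\equiv 1\bmod 4$, there are no characters of exact order $4$, so the right-hand side is an empty sum and equals $0$; I then need to check that the left-hand side also vanishes in that case. This can be done by the substitution $x\mapsto -x$, which sends $x(x^2-4)$ to $-x(x^2-4)$, combined with the fact that $\lambda(-1)=-1$ when $|\F|\equiv 3\bmod 4$: the sum is equal to its own negative, hence zero. So from now on assume $|\F|\equiv 1\bmod 4$, so that $\lambda(-1)=1$ and exactly two characters $\chi$ of exact order $4$ exist, which are complex conjugates of one another; note $\overline{\ja_\F(\chi,\chi)}=\ja_\F(\bar\chi,\bar\chi)$, so the right-hand side is $-2\operatorname{Re}\ja_\F(\chi,\chi)$, a real number.

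Next I would factor $x(x^2-4)=x(x-2)(x+2)$ and count: $\sum_{x\in\F}\lambda\big(x(x-2)(x+2)\big)$ is, up to a small correction at the bad points $x\in\{0,2,-2\}$ (where the summand is $0$), the number of affine points on the curve $y^2=x(x-2)(x+2)$ minus $|\F|$. That curve is an elliptic curve $E$ over $\F$ (it has full $2$-torsion over $\F$), and the sum in question is exactly $-a_E$, the negative of the trace of Frobenius, i.e. $\sum_{x\in\F}\lambda(x(x^2-4)) = -a_E$ with $|\F|+1-\#E(\F) = a_E$. The key structural fact is that $E$ has complex multiplication by $\Z[i]$: indeed $y^2=x^3-4x$ is the CM curve with $j=1728$, and its point count is governed by a Jacobi sum of an order-$4$ character. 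Concretely, I would invoke (or reprove in two lines) the classical formula $\#\{y^2=x^3-Dx\} $ expressed via $\ja_\F(\chi,\chi)$ with $\chi$ of order $4$ — this is the standard evaluation of the number of points on $y^2=x^3-Dx$ over $\F$ in terms of quartic Jacobi/Gauss sums, and with $D=4$ the twisting factor $\chi(D)=\chi(4)$ can be absorbed or shown trivial.

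Alternatively — and this is probably the cleaner route to write — I would avoid elliptic curves altogether and compute directly by character orthogonality, mirroring the style of Proposition \ref{prop.charsum.id}. Write $\lambda(x(x^2-4)) = \lambda(x)\lambda(x-2)\lambda(x+2)$; after removing $x=0$ and rescaling $x\mapsto 2x$ (using $\lambda(2^3)=\lambda(2)$ and $\lambda(8)=\lambda(2)$, so $\lambda(8)\lambda(\cdot)$-bookkeeping is harmless), reduce to $\sum_{x\ne 0}\lambda(x)\lambda(x-1)\lambda(x+1) = \sum_{x\ne 0}\lambda(x)\lambda(x^2-1)$. Then split off the sum over squares: $\sum_{x}\lambda(x)\lambda(x^2-1)$ picks up, via $\sum_{a:a^2=u}1 = 1+\lambda(u)$, a combination of $\sum_u \lambda(u^2-1)$-type sums and $\sum_u \lambda(u)\lambda(u^2-1)$; expanding $\lambda = $ (sum over order-$4$ characters $\chi$ with $\chi^2=\lambda$... ) — more precisely, using that for $|\F|\equiv1\bmod4$ one has, for the relevant substitution, the identity $\sum_{a\in\F}\lambda(a^2+b) = -1 - $ (Jacobi-sum terms) — I would expand everything in terms of Jacobi sums $\ja_\F(\chi,\chi)$ and collect. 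The main obstacle, and the only place requiring care, is the bookkeeping of the order-$4$ characters and the factors $\chi(2)$, $\chi(4)$, $\chi(-1)$ that appear: since $\chi$ has order $4$, $\chi(-1)=\chi^2(\,\cdot\,)$-value $=\lambda(-1)=1$ here, and one must check that the quartic-character factor $\chi(4)=\chi(2)^2=\lambda(2)$ combines with the other $\lambda(2)$'s to cancel, leaving precisely $-\sum_{\chi:\chi^4=\trivcar,\ \chi^2\ne\trivcar}\ja_\F(\chi,\chi)$ with no stray twist. I expect this sign/twist reconciliation to be the crux; everything else is orthogonality of characters over $\F^\times$.
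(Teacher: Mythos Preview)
Your handling of the case $|\F|\not\equiv 1\bmod 4$ is correct and is exactly what the paper does. For the main case, your two proposed routes diverge from the paper in different ways.

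The elliptic-curve approach (recognising $y^2=x^3-4x$ as the $j=1728$ CM curve and invoking the classical expression of its Frobenius trace via quartic Jacobi sums) is a legitimate alternative and would work. The paper does not take this route at all; it stays purely within character-sum manipulations. Your approach is conceptually heavier but has the advantage of immediately explaining \emph{why} order-$4$ Jacobi sums appear.

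Your ``cleaner direct route'' is in the same spirit as the paper's proof, but as written it has a gap. You want to apply the counting identity $|\{a:a^2=u\}|=1+\lambda(u)$ to the sum $\sum_x\lambda(x)\lambda(x^2-1)$, but the summand is \emph{not} a function of $x^2$ alone because of the factor $\lambda(x)$; so the substitution $u=x^2$ cannot be made directly, and your description of what comes out (``a combination of $\sum_u\lambda(u^2-1)$-type sums and $\sum_u\lambda(u)\lambda(u^2-1)$'') does not follow. The paper's key trick, which you are missing, is to first choose an order-$4$ character $\theta$ with $\theta^2=\lambda$ and rewrite
\[
\lambda(x)=\theta^2(x)=\theta(x^2),
\]
so that $S_\F=\sum_x\theta(x^2)\lambda(x^2-4)$ is now genuinely a function of $x^2$. \emph{Then} the counting trick gives
\[
S_\F=\sum_{z\in\F}\bigl(1+\lambda(z)\bigr)\theta(z)\lambda(z-4)
=\sum_{\chi\in\{\theta,\lambda\theta\}}\sum_{z}\chi(z)\lambda(z-4),
\]
and each inner sum is a Jacobi sum after an affine change of variable; the identity \refJac{eq.jac.2} absorbs the stray $\chi(4)$ factors you were worried about. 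Your phrase ``expanding $\lambda$ as a sum over order-$4$ characters'' is not quite the right picture: one does not decompose $\lambda$, one replaces $\lambda(x)$ by $\theta(x^2)$.
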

\begin{proof} 
We denote by $S_\F$ the sum on the left-hand side of \eqref{eq.char.id2}. 
We note that $\lambda(-1) = (-1)^{(|\F|-1)/2}$, so that $\lambda(-1) =1$ if and only if $|\F|\equiv1\bmod{4}$.
This happens if and only if there exist characters $\chi:\F^\times\to\Qbar^\times$ of order $4$. 
Reindexing the sum $S_\F$ by setting $x'=-x$, we obtain that $S_\F = \lambda(-1)\cdot S_\F$. 
Hence, $S_\F=0$ when $\lambda(-1)=-1$; and \eqref{eq.char.id2} is proved in this case, since the right-hand side is then an empty sum. 

We now assume that $|\F|\equiv 1\bmod{4}$ and denote by $\theta:\F^\times\to\Qbar^\times$ one of the two characters of order $4$ (the other one being $\lambda\theta$). 
Since $\lambda = \theta^2$, the sum $S_\F$ can be rewritten as
\[S_\F = \sum_{x\in\F}\lambda(x)\lambda(x^2-4) = \sum_{x\in\F}\theta(x^2)\lambda(x^2-4) = \sum_{z\in\F}(1+\lambda(z))\cdot\theta(z)\lambda(z-4)
= \sum_{ \chi\in\{\theta, \lambda\theta\}} \left(\sum_{z\in\F} \chi(z)\lambda(z-4)\right).\]
Indeed, for any $z\in\F$, the number of $x\in\F$ such that $x^2=z$ equals $1+\lambda(z)$.
For a nontrivial character $\chi:\F^\times\to\Qbar^\times$, setting $(u_1, u_2)=(z/4, 1-z/4)$ allows to identify the following sum to a Jacobi sum:
\[\sum_{z\in\F} \chi(z)\lambda(z-4) = {\chi}(4){\lambda}(-4) \cdot \sum_{\substack{u_i\in\F \\ u_1+u_2=1}} \chi(u_1)\lambda(u_2)
= - {\chi}(4){\lambda}(-4)\cdot\ja_\F(\chi, \lambda).\]
Hence, using \refJac{eq.jac.2} and noting that $\lambda(-4)=1$ and $\theta^2(4)=\lambda(4)=1$, the above equality applied successively to $\chi=\theta$ and $\chi=\lambda\theta$ implies that 
\[-S_\F 
= {\theta}(4)\lambda(-4)\cdot \ja_\F(\theta, \lambda) + \theta(4)\lambda(4)\lambda(-4)\cdot \ja_\F(\lambda\theta, \lambda)
=  \ja_\F(\theta, \theta) +   \ja_\F(\lambda\theta, \lambda\theta).  \eqno\square\]
\end{proof}

\subsection{Action of $q$ on $\Z/2d\Z$}
\label{sec.q.act}

For any integer $D\geq 1$ coprime to $q$, there is a natural action of the subgroup $\langle q\rangle_d\subset(\Z/D\Z)^\times$ generated by $q$ on $\Z/D\Z$ by multiplication%
\footnote{We will simply say that `$q$ acts on $\Z/D\Z$ by multiplication'.}. 
For any subset $Z\subset \Z/D\Z$ which is stable under this action, we denote by $\O_q(Z)$ the set of orbits of $Z$. 
Given such a $Z\subset\Z/D\Z$ and an orbit $\Mm\in\O_q(Z)$, we will often need to make a choice of representative $m\in Z$ of this orbit: we make the convention that orbits in $\O_q(Z)$ are always denoted by a bold letter ($\Mm$, $\mathbf{n}$,~...) and that the corresponding normal letter ($m$, $n$,~...) designates any choice of representative in $Z$ of this orbit.

For any integer $n\geq 1$ coprime to $q$, we denote the (multiplicative) order of $q$ modulo $n$ by $o_q(n)$ \ie{}, the order of the subgroup $\langle q\rangle_n\subset(\Z/n\Z)^\times$ generated by $q$.
For any orbit $\Mn\in\O_q(Z)$, its length $|\Mn| = \left| \left\{ n, qn, q^2n, \dots\right\}\right|$ is equal to
\[|\Mn| = \min\left\{ \nu\in\Z_{\geq 1} \ \big| \ q^\nu n \equiv n \bmod{D}\right\},\]
which, in turn, equals 
$|\Mn|= o_q(D/\gcd(D, n))$ 
for any~$n\in\Mn$.
For any power $q^v$ of $q$, note that $q^v n\equiv n \bmod{D}$ if and only if $|\Mn|$ divides $v$, \ie{} if and only if $\F_{q^v}$ is an extension of $\F_{q^{|\Mn|}}$ (by construction of the order).

For any integer $d\geq 2$ which is coprime to $q$, we will be particularly interested in the subset  
\[Z_{2d} := \begin{cases}
 \Z/2d\Z\smallsetminus\{0, d/2, d, 3d/2\} &\text{ if $d$ is even},\\
 \Z/2d\Z\smallsetminus\{0, d\} &\text{ if $d$ is odd}
 \end{cases}
\]
of $\Z/2d\Z$ (which is stable under multiplication by $q$ because $\gcd(2d,q)= 1$) and in the corresponding set of orbits $\O_q(Z_{2d})$. 

\begin{rema}\label{rema.trivq.act}
 In the special case when $2d$ divides $q-1$ (\ie{} when $q\equiv 1\bmod{2d}$), the action of $q$ on $Z_{2d}$ is trivial and there is a bijection between $\O_q(Z_{2d})$ and $Z_{2d}$.
\end{rema}

\subsection{Characters of order dividing $2d$}
\label{sec.char}

Let us fix an algebraic closure $\bar{\Q}$ of $\Q$ and a prime ideal~$\gP$ above $p$ in the ring of algebraic integers $\bar{\Z}\subset \bar{\Q}$.
The residue field $\bar{\Z}/\gP$ can be viewed as an algebraic closure $\bar{\F_p}$ of $\F_p$. 
Moreover, the reduction map $\bar{\Z}\to\bar{\Z}/\gP$ induces an isomorphism between the group $\mu_{\infty, p'}\subset\bar{\Z}^\times$ of roots of unity of order prime to $p$ and the multiplicative group $\bar{\F_p}^\times$. 
We let $\tt:\bar{\F_p}^\times\to\mu_{\infty, p'}$ be the inverse of this isomorphism, and we denote by the same letter the restriction of $\tt$ to any finite field $\F_q$ (which we view as as a subfield of $\bar{\Z}/\gP$). 
Note that any nontrivial multiplicative character on any finite extension of $\F_q$ is then a power of $\tt$.

Suppose we are given a prime-to-$p$ integer $d\geq 1$; for any $n \in \Z/{2d}\Z\smallsetminus\{0\}$, let $\Mn\in \O_q(\Z/2d\Z)$ be the corresponding orbit and  define a multiplicative character $\tt_n : \F_{q^{|\Mn|}}^\times\to \Qbar^\times$ by
\[\forall x\in\F_{q^{|\Mn|}}^\times, \quad  \tt_n(x) = \tt(x)^{(q^{|\Mn|}-1)n/2d} \quad \text{ and put } \tt_n(0):=0. 
\]
Further, if $s\geq 1$, we  `lift' $\tt_n$ to a character $\tt_n^{(s)}:\F_{q^{s|\Mn|}}^\times\to\Qbar^\times$ via the relative norm $\F_{q^{s|\Mn|}}\to\F_{q^{|\Mn|}}$, \ie{} we let $\tt_n^{(s)}:=\tt_n\circ \norm_{\F_{q^{s|\Mn|}}/\F_{q^{|\Mn|}}}$.

The order of $\tt_n$ obviously divides $2d$ and a more careful computation shows that $\tt_n$ has exact order  $2d/\gcd(2d, n)$. 
Note that the order of $\tt_n$ does not divide $4$ when $n\in Z_{2d}\subset\Z/2d\Z$.
Since the norm is surjective, $\tt_n^{(s)}$ has the same order as $\tt_n$ for all $s\geq 1$.

The interest of the characters $\tt_n^{(s)}$ (with $n\in\Z/2d\Z\smallsetminus\{0\}$, $s\geq 1$) lies in the following result:
\begin{lemm}\label{lemm.reindex}
Let $d\geq 2$ be coprime to $q$, and $\F$ be a finite extension of $\F_q$. 
Denote by $X_{2d}(\F)$ the set of nontrivial characters $\chi$ on $\F^\times$ such that $\chi^{2d}=\trivcar$ and $\chi^4\neq \trivcar$. Then
\[ X_{2d}(\F) 
= \left\{ \tt_n^{(s)}, \ n\in Z_{2d} \text{ and } s\geq 1 \text{ such that } s\cdot |\Mn| = [\F:\F_q]\right\}.\]
\end{lemm}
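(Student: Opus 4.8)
The plan is to prove the asserted set equality by establishing the two inclusions separately, the heart of the matter being a counting argument (or an equivalent bijectivity argument) once the two obvious facts — that every $\tt_n^{(s)}$ lies in $X_{2d}(\F)$, and that the listed characters are pairwise distinct — are in place. Write $r := [\F:\F_q]$, so that $\F = \F_{q^r}$ and $\F^\times$ is cyclic of order $q^r - 1$; since $2d \mid q^{|\Mn|} - 1$ divides $q^r - 1$ whenever $|\Mn| \mid r$, the characters in question are well-defined on $\F^\times$.

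First I would check the inclusion "$\supseteq$". Fix $n \in Z_{2d}$ and $s \geq 1$ with $s\cdot|\Mn| = r$. By construction $\tt_n$ has exact order $2d/\gcd(2d,n)$, and since the norm $\norm_{\F/\F_{q^{|\Mn|}}}$ is surjective, $\tt_n^{(s)} = \tt_n \circ \norm_{\F/\F_{q^{|\Mn|}}}$ has the same exact order. As already noted in the text, for $n \in Z_{2d}$ this order does not divide $4$ (one checks $n \neq 0, d/2, d, 3d/2$ forces $2d/\gcd(2d,n) \notin \{1,2,4\}$), and it obviously divides $2d$; hence $(\tt_n^{(s)})^{2d} = \trivcar$, $(\tt_n^{(s)})^4 \neq \trivcar$, and $\tt_n^{(s)}$ is nontrivial, i.e. $\tt_n^{(s)} \in X_{2d}(\F)$.

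Next I would argue that the map $(n,s) \mapsto \tt_n^{(s)}$ is injective on the indexing set. Given $n \in Z_{2d}$, the length $|\Mn| = o_q(2d/\gcd(2d,n))$ is determined by $n$, and then $s = r/|\Mn|$ is forced; so it suffices to show that $n \mapsto \tt_n^{(s)}$ is injective modulo the $q$-action, i.e. that $\tt_n^{(s)} = \tt_{n'}^{(s')}$ (with the respective lengths dividing $r$) implies $n \equiv q^a n' \bmod{2d}$ for some $a$. Unwinding the definitions: $\tt_n^{(s)}(x) = \tt(x)^{(q^r - 1)n/2d}$ for $x \in \F^\times$ (using the formula $\tt \circ \norm_{\F/\F'} = \tt|_{\F}^{\,(q^r-1)/(q^{[\F':\F_q]}-1)}$ on roots of unity), so $\tt_n^{(s)} = \tt_{n'}^{(s')}$ forces $(q^r-1)n/2d \equiv (q^r-1)n'/2d \bmod{q^r-1}$, i.e. $n \equiv n' \bmod{2d}$ — in fact this shows the restriction of $\tt_n^{(s)}$ to $\F^\times$ already pins down $n$ exactly, so the map is genuinely injective on $\{(n,s) : n \in \Z/2d\Z\smallsetminus\{0\},\ s|\Mn| \mid \cdots\}$, and a fortiori on $Z_{2d}$.

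Finally, for the inclusion "$\subseteq$" I would use a counting argument. By definition $X_{2d}(\F)$ consists of the characters of $\F^\times$ whose order divides $2d$ but not $4$; since $\F^\times$ is cyclic, the number of characters of order dividing $2d$ is $\gcd(2d, q^r-1) = 2d$ (as $2d \mid q^r - 1$ — this holds because $2d \mid q^{|\Mn|}-1$ for the orbit of $1$... more carefully: $Z_{2d}$ is nonempty only when the relevant orbit lengths divide $r$, and one checks $2d \mid q^r-1$ precisely under the running hypotheses; alternatively, note every character listed has order a divisor of $2d$ and these orders occur, forcing $2d \mid q^r-1$). So $|X_{2d}(\F)| = 2d - \#\{n \in \Z/2d\Z : \gcd(2d,n)\ \text{s.t. order}\mid 4\} = 2d - \#(\{0,d/2,d,3d/2\}\text{ or }\{0,d\}) = |Z_{2d}|$ — exactly the cardinality of $\{n \in Z_{2d} : |\Mn| \mid r\}/\langle q\rangle$... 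Here I must be slightly careful: the right-hand set of the lemma is indexed by pairs $(n,s)$, not orbits, but $\tt_n^{(s)} = \tt_{qn}^{(s)}$ since $n$ and $qn$ have the same orbit and the norm-compatibility gives equal characters; so the right-hand side has exactly $\#\{n \in Z_{2d}: o_q(2d/\gcd(2d,n)) \mid r\}$ elements when counted without repetition, which for the present purposes (and certainly in the case $2d \mid q-1$ of Remark \ref{rema.trivq.act}) equals $|X_{2d}(\F)|$. Combining the reverse inclusion just proved with the matching cardinalities yields equality.

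\textbf{Main obstacle.} The genuinely delicate point is the bookkeeping around the $q$-action: ensuring the count on the right-hand side is done "per character" rather than "per pair $(n,s)$", i.e. that distinct $n$ in the same $q$-orbit yield the same character (which is exactly the Hasse–Davenport-type compatibility $\tt_{qn}^{(s)} = \tt_n^{(s)}$ built into the norm construction), and conversely that $n$ ranging over a full set of orbit representatives in $Z_{2d}$ with $|\Mn| \mid r$ produces each element of $X_{2d}(\F)$ exactly once. Everything else — membership, exact-order computations, the cardinality of $Z_{2d}$ — is routine given the preliminaries in \S\ref{sec.q.act} and \S\ref{sec.char}.
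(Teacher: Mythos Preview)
Your overall strategy (check $\supseteq$, prove injectivity of $n\mapsto\tt_n^{(s)}$, then match cardinalities) is the same one the paper sketches, and your inclusion step and your injectivity argument via the formula $\tt_n^{(s)}(x)=\tt(x)^{(q^r-1)n/2d}$ are correct. However, the execution breaks down at two places.

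First, the counting step tacitly assumes $2d\mid q^r-1$. The lemma is stated for an arbitrary finite extension $\F=\F_{q^r}$ of $\F_q$, with no such hypothesis; and it is easy to produce counterexamples (e.g.\ $q=5$, $d=3$, $r=1$). In general the cyclic group of characters on $\F^\times$ of order dividing $2d$ has size $E:=\gcd(2d,q^r-1)$, so $|X_{2d}(\F)|=E-\gcd(4,E)$, not $|Z_{2d}|$. On the other side, the right-hand set has exactly $\#\{n\in Z_{2d}:|\Mn|\mid r\}$ elements (by your injectivity), and since $|\Mn|\mid r$ is equivalent to $2d/\gcd(2d,n)\mid q^r-1$, a short computation with $\sum_{e\mid E}\phi(e)=E$ gives
\[
\#\{n\in Z_{2d}:|\Mn|\mid r\}=\sum_{\substack{e\mid E\\ e\nmid 4}}\phi(e)=E-\gcd(4,E).
\]
So the counts do match, but not for the reason you gave; your argument as written only covers the special case $E=2d$.

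Second, and more seriously, your ``Main obstacle'' paragraph asserts $\tt_n^{(s)}=\tt_{qn}^{(s)}$. This is false and directly contradicts the injectivity you just proved: your own formula gives $\tt_{qn}^{(s)}=(\tt_n^{(s)})^q$, which equals $\tt_n^{(s)}$ only when the order of $\tt_n^{(s)}$ divides $q-1$, i.e.\ only when $|\Mn|=1$. The right-hand side of the lemma is genuinely indexed by elements $n\in Z_{2d}$ (subject to $|\Mn|\mid r$), not by $q$-orbits; distinct $n$ in the same orbit give distinct characters. Once you drop this erroneous identification, the whole ``bookkeeping around the $q$-action'' obstacle evaporates: injectivity plus the corrected count finishes the proof cleanly.
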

In particular, as $n$ runs through $Z_{2d}$ and $s$ runs through the positive integers, $\tt_n^{(s)}$ runs through all characters  on all finite extensions of $\F_q$ whose order divides $2d$ and whose 
$4$th power is nontrivial. 
 
\begin{proof}
The detailed proof of a very similar result can be found in \cite[\S2.2]{Griffon_Hessian}. 
Let us write $\F=\F_{q^n}$, denote by $C$ the (cyclic) group $C$ of multiplicative characters on $\F_{q^n}^\times$ 
whose order divides $2d$ and 
 ${e_n:=\gcd(2d, q^n -1)}=|C|$.
The main point is to note that the character $\cchi:=\tt^{(q^n-1)/e_n}$ on $\F_{q^n}^\times$ has  exact order $e_n$. Therefore $\cchi$ generates $C$: any element of $C$ is of the form $\cchi^k$ for $k\in\{0, \dots, 2d-1\}$. 
To obtain the enumeration of $X_{2d}(\F_{q^n})\subset C$, it only remains to exclude  the characters in $C$ whose order divide $4$: these correspond to $\cchi^k$ with $k\in\{0, d/2, d, 3d/2\}$. 
\ProofEnd\end{proof}

\begin{rema}\label{rema.trivq.act2} 
In the special case when $2d$ divides $q-1$ (\ie{} when $q\equiv 1\bmod{2d}$), all the characters $\tt_m$ (with $m\in Z_d$) are characters on $\F_q^\times$ because $|\Mm|=1$ (see Remark \ref{rema.trivq.act}). 
Moreover we remark that $\tt_m = (\tt_1)^m$ for all $m\in Z_d$, and that $\tt_1$ has exact order $2d$. 
Hence, by the Lemma above, as $m$ runs through $Z_{2d}$, the characters $(\tt_1)^m$ enumerate all possible nontrivial characters $\chi$ on $\F_q^\times$ such that $\chi^{2d}=\trivcar$ and $\chi^4\neq\trivcar$.
\end{rema}

 \subsection[Character sums]{The sums $\bbeta(n)$}
 \label{sec.bbeta}
 
We can finally introduce the following notation: 
\begin{defi}\label{defi.jacobi} Let $d\geq 1$ be coprime to $q$. For any $n\in Z_{2d}$, we let $Q:=q^{|\Mn|}$ and set 
\begin{equation}\label{eq.defi.jacobi}
\bbeta(n) :=  
\tt_n^{2}(4) \cdot \ja_{\F_Q}(\tt_n, \tt_n)\cdot \ja_{\F_Q}(\lambda_{\F_Q}\cdot\tt_n^2, \tt_n^{-1}).
\end{equation}
 For any orbit $\Mn\in\O_q(Z_{2d})$, we let  $\bbeta(\Mn):=\bbeta(n)$ for any choice of $n\in\Mn$. 
\end{defi}
 
We compile a few results about these numbers $\bbeta(n)$:

\begin{prop}\label{prop.beta}
Let $d\geq 1$ be coprime to $q$. 
The following statements hold:
\begin{enumerate}[(i)]
\item\label{item.beta.defined}
For any $\Mn\in\O_q(Z_{2d})$, $\bbeta(\Mn)$ is well-defined (\ie{} $\bbeta(q\cdot n) = \bbeta(n)$ for all $n\in Z_{2d}$),
\item\label{item.beta.B}
 In the notations of \S\ref{sec.identity},  $\bbeta(n)= B(\F_{q^{|\Mn|}},\tt_n)$ for all $n\in Z_{2d}$, 
\item\label{item.beta.HD}
 For all $n\in Z_{2d}$ and for any $s\geq 1$, we have $B\big(\F_{q^{s\cdot|\Mn|}},  \tt_n^{(s)}\big)=\bbeta(n)^{s}$,
 \item\label{item.beta.magnitude}
 One has $|\bbeta(n)|=q^{|\Mn|}$ for all $n\in Z_{2d}$, 
\end{enumerate}
\end{prop}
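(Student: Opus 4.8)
The plan is to read off all four assertions from Proposition~\ref{prop.charsum.id}, Corollary~\ref{coro.HD.B}, and the Jacobi-sum estimate \refJac{eq.jac.absval}; the arguments are all short, the only real care being to keep track of which multiplicative characters are trivial. I would begin with \eqref{item.beta.B}. Fix $n\in Z_{2d}$ and put $Q:=q^{|\Mn|}$, so that $\tt_n$ is a character on $\F_Q^\times$. By the discussion in \S\ref{sec.char}, $\tt_n$ is nontrivial of exact order $2d/\gcd(2d,n)$, an order which does not divide $4$ because $n\in Z_{2d}$; in particular $\tt_n\neq\trivcar$ and $\tt_n$ has order $\neq 2,4$. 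Hence the character $\chi=\tt_n$ falls into the last (``otherwise'') case of Proposition~\ref{prop.charsum.id}, which --- using $\bar{\tt_n}=\tt_n^{-1}$ --- reads
\[B(\F_Q,\tt_n)=\tt_n^{2}(4)\cdot \ja_{\F_Q}(\tt_n,\tt_n)\cdot \ja_{\F_Q}(\lambda_{\F_Q}\tt_n^{2},\tt_n^{-1})=\bbeta(n),\]
which is precisely \eqref{item.beta.B}.

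Assertions \eqref{item.beta.HD} and \eqref{item.beta.defined} then follow quickly. For \eqref{item.beta.HD}: by definition $\tt_n^{(s)}=\tt_n\circ\norm_{\F_{q^{s|\Mn|}}/\F_Q}$, so Corollary~\ref{coro.HD.B} combined with \eqref{item.beta.B} gives $B(\F_{q^{s|\Mn|}},\tt_n^{(s)})=B(\F_Q,\tt_n)^{s}=\bbeta(n)^{s}$. For \eqref{item.beta.defined}: the element $qn$ lies in the same orbit $\Mn$, so the field attached to it is again $\F_Q$, and \eqref{item.beta.B} gives $\bbeta(qn)=B(\F_Q,\tt_{qn})$. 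From the formula $\tt_n(x)=\tt(x)^{(Q-1)n/2d}$ and the multiplicativity of $\tt$ one reads off $\tt_{qn}=\tt_n\circ\mathrm{Frob}_q$, where $\mathrm{Frob}_q\colon x\mapsto x^{q}$ is the $q$-power Frobenius of $\F_Q$. Substituting $(x,z)\mapsto(x^{q},z^{q})$ --- a bijection of $\F_Q\times\F_Q^{\times}$ --- in the double sum \eqref{eq.doublecharsum} defining $B(\F_Q,\tt_n)$, and using that $\mathrm{Frob}_q$ is a ring homomorphism and that $\lambda_{\F_Q}(w^{q})=\lambda_{\F_Q}(w)^{q}=\lambda_{\F_Q}(w)$ (since $q$ is odd and $\lambda_{\F_Q}$ is $\{0,\pm1\}$-valued), one obtains $B(\F_Q,\tt_n)=B(\F_Q,\tt_n\circ\mathrm{Frob}_q)=B(\F_Q,\tt_{qn})$. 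Therefore $\bbeta(n)=\bbeta(qn)$, so $\bbeta(\Mn)$ is well defined.

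For \eqref{item.beta.magnitude} I would bound each of the three factors in \eqref{eq.defi.jacobi}. The factor $\tt_n^{2}(4)$ is a root of unity (as $4\neq 0$ in $\F_Q$), hence has absolute value $1$ in every complex embedding of $\Qbar$. For the two Jacobi sums I would apply \refJac{eq.jac.absval}, checking its hypotheses as follows: writing $g$ for the order of $\tt_n$, we noted above that $g\nmid 4$ (so $g\geq 3$). This forces $\tt_n\neq\trivcar$ and $\tt_n^{2}\neq\trivcar$, whence $|\ja_{\F_Q}(\tt_n,\tt_n)|=Q^{1/2}$; it also forces $\lambda_{\F_Q}\tt_n^{2}\neq\trivcar$ (otherwise $g\mid 4$), $\tt_n^{-1}\neq\trivcar$, and $\lambda_{\F_Q}\tt_n^{2}\cdot\tt_n^{-1}=\lambda_{\F_Q}\tt_n\neq\trivcar$ (otherwise $\tt_n=\lambda_{\F_Q}$, hence $g=2$), whence $|\ja_{\F_Q}(\lambda_{\F_Q}\tt_n^{2},\tt_n^{-1})|=Q^{1/2}$ as well. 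Multiplying the three absolute values gives $|\bbeta(n)|=Q=q^{|\Mn|}$.

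I do not expect any genuine obstacle here: the proposition is essentially bookkeeping, and every verification ultimately rests on the single input recorded in \S\ref{sec.char}, namely that the order of $\tt_n$ does not divide $4$ when $n\in Z_{2d}$. The one step I would flag is the Frobenius-reindexing used for \eqref{item.beta.defined} --- this is precisely what legitimizes indexing the quantities $\bbeta(\cdot)$ by the $q$-orbits of $Z_{2d}$ rather than by individual elements --- together with the routine but mildly fiddly non-triviality checks needed in \eqref{item.beta.magnitude}.
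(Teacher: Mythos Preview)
Your proof is correct and matches the paper's argument essentially line for line: both use Proposition~\ref{prop.charsum.id} for \eqref{item.beta.B}, Corollary~\ref{coro.HD.B} for \eqref{item.beta.HD}, and \refJac{eq.jac.absval} with the non-triviality checks for \eqref{item.beta.magnitude}. The only cosmetic difference is in \eqref{item.beta.defined}: the paper applies the Frobenius reindexing $x\mapsto x^q$ directly to each of the three factors in the Jacobi-sum definition \eqref{eq.defi.jacobi}, whereas you first invoke \eqref{item.beta.B} and then reindex the double sum $B(\F_Q,\cdot)$ --- the same idea in a slightly different wrapper.
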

 
\begin{proof}
Let $n\in Z_d$ and set $Q:=q^{|\Mn|}$. 
The map $x\mapsto x^q$ is a bijection of $\F_{Q}$ and $\lambda_{\F_Q}= \lambda_{\F_Q}^q$, 
thus $\ja_{\F_Q}(\tt_n, \tt_n)=\ja_{\F_Q}(\tt_{q\cdot n}, \tt_{q\cdot n})$ and $\ja_{\F_Q}(\lambda_{\F_Q}\tt_n^2, \tt_n^{-1})=\ja_{\F_Q}(\lambda_{\F_Q}\tt_{q\cdot n}^2, \tt_{q\cdot n}^{-1})$. 
Moreover, $\tt_n(4) = \tt_n(4)^q = \tt_{q\cdot n}(4)$ since $4\in\F_q^\times$,
hence $\bbeta(n)=\bbeta(q\cdot n)$; 
a repeated application of this identity implies that the value of $\bbeta(n)$ is constant along the orbit $\Mn$, which shows that \eqref{item.beta.defined} holds. 
Item \eqref{item.beta.B} is a direct consequence of Proposition \ref{prop.charsum.id}
upon remarking that the order of $\tt_n$ does not divide $4$. 
Again by construction of $Z_{2d}$, none  of $\tt_n$, $\tt_n^2$ and $\tt_n^4$ is trivial, so that $|\bbeta(n)|=q^{|\Mn|}$ by combining Proposition \ref{prop.charsum.id} and \refJac{eq.jac.absval}; thus \eqref{item.beta.magnitude} is proved.
Finally, we deduce from Corollary~\ref{coro.HD.B} that 
$B(\F_{Q^s}, \tt_n^{(s)}\big) = B(\F_Q, \tt_n)^s$ for all $s\geq 1$; item \eqref{item.beta.B} above then yields \eqref{item.beta.HD}.
 \ProofEnd\end{proof}


\section[The L-function]{The $L$-function}
\label{sec.Lfunc}   

In this section, we give an explicit expression for the $L$-function of $E_d/K$. 
Before we do so, let 
us first recall the definition of $L(E_d/K, T)$. 

Let $d\geq 1$ be an integer. For any place $v$ of $K$, let $q_v$ be the cardinality of the residue field $\F_{v}$ of $K$ at $v$. 
For such a $v$, we denote by $(\widetilde{E_d})_v$ the plane cubic curve over $\F_v$ obtained by reducing modulo $v$  a minimal $v$-integral model of $E_d$, and we put $a_v := |\F_v| +1 - |(\widetilde{E_d})_v(\F_{v})|$.
If $v$ is a place of bad reduction for $E_d$, notice that $a_v$ is $0, +1$ or~${-1}$ depending on whether the reduction of $E_d$ at $v$ is additive, split multiplicative or nonsplit multiplicative, respectively. 
Recall that the $L$-function of $E_d/K$ is  the power series given by
\begin{equation}\label{eq.def.Lfunc}
 L(E_d/K, T) = \prod_{v\text{ good }} \left(1 - a_v \cdot T^{\deg v} + q_v \cdot  T^{2\deg v}\right)^{-1} \cdot
\prod_{v\text{ bad }} \left(1 - a_v \cdot  T^{\deg v}  \right)^{-1},
\end{equation}
where the products are over places of $K$ of good (resp. bad) reduction for $E_d$.
The reader may consult \cite[Lect.\,1, \S9]{UlmerParkCity} or \cite[Lect. 2, \S2]{Gross_bsd} for more details. 

The curve $E_d/K$ being nonisotrivial, a theorem of Grothendieck shows that $L(E_d/K, T)$ is actually a polynomial which has integral coefficients (see \cite[App. D]{Gross_bsd}).
In particular, it makes sense to consider the multiplicity $\ord_{T=q^{-1}}L(E_d/K, T)\in\Z_{\geq 0}$ of $q^{-1}$ as a zero of this polynomial (see Corollary \ref{coro.expr.rk}).

\subsection{Explicit expression for $L(E_d/K, T)$}  
	Our first main result is the following: 
	
	\begin{theo}\label{theo.Lfunc} For any integer $d\geq 1$ coprime to $q$, consider the elliptic curve $E_d/K$ defined by \eqref{eq.Wmodel}. Set 
	\[Z_{2d} := \begin{cases}
	 \Z/2d\Z\smallsetminus\{0, d/2, d, 3d/2\} &\text{ if $d$ is even},\\
	 \Z/2d\Z\smallsetminus\{0, d\} &\text{ if $d$ is odd}.
	 \end{cases} \]
	The $L$-function of $E_d$ admits the expression
	\begin{equation}\label{eq.Lfunc}
	L(E_d/K, T) = (1-qT)\cdot \prod_{\Mn\in\O_q(Z_{2d})}\left( 1 - \bbeta(\Mn)\cdot T^{|\Mn|}\right),
	\end{equation}
	where $\bbeta(\Mn)$ has been defined in \eqref{eq.defi.jacobi}.
	 \end{theo}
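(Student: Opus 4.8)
The plan is to compute $L(E_d/K,T)$ directly from its Euler product definition \eqref{eq.def.Lfunc}, grouping the local factors according to the bad-reduction data of Proposition \ref{prop.badred} and then expressing the ``good-reduction part'' as a product over characters of order dividing $2d$ via the character sum manipulations of section \ref{sec.prelim}. First I would reduce to the case $\gcd(d,q)=1$: by Remark \ref{rema.alld} the curve $E_d$ is $K$-isogenous to $E_{d'}$ with $d=d'p^e$, hence has the same $L$-function, and the sets $Z_{2d}$, $\O_q(Z_{2d})$ and the numbers $\bbeta(\cdot)$ depend only on $d'$ once one is careful about $p$-divisibility; so assume $\gcd(2d,q)=1$ throughout.

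Next I would isolate the bad places. By Proposition \ref{prop.badred} the bad places are $v=0$ (only when $d$ is odd, additive, so $a_0=0$ and it contributes a trivial factor), the places $v\in M_d$ dividing $t^{2d}+16$ (all split multiplicative, $a_v=+1$), and $v=\infty$ (split multiplicative, $a_\infty=+1$). So the bad Euler factors are $\prod_{v\in M_d}(1-T^{\deg v})^{-1}\cdot(1-T)^{-1}$. I would then write the full product $L(E_d/K,T)=\prod_{v}(\text{local factor})$ and take the logarithmic derivative / pass to the ``zeta-like'' bookkeeping: for each $n\geq 1$, the coefficient governing $T^n$ is a weighted sum of $a_v$ over places of degree dividing $n$, which by the standard dictionary equals $\sum_{x\in\P^1(\F_{q^n})} a_x$-type data; concretely I would count $\F_{q^n}$-points on an affine model of $E_d$ uniformly over all $t\in\F_{q^n}$ and then correct for the (finitely many) bad $t$ and for $t=\infty$. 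Using the short model \eqref{eq.Wmodel}, for fixed $t=\tau\in\F_{q^n}^\times$ the affine point count of $y^2=x(x^2+\tau^{2d}x-4\tau^{2d})$ is $q^n+\sum_{x\in\F_{q^n}}\lambda(x^3+\tau^{2d}x^2-4\tau^{2d}x)$; summing over $\tau$ and substituting $z=\tau^{2d}$ (which hits each square-class appropriately) produces exactly the double sum $B(\F_{q^n},\chi)$-machinery after expanding the indicator of ``$z$ is a $2d$-th power'' as $\frac{1}{?}\sum_{\chi^{2d}=\trivcar}\chi(z)$ — here I would invoke Lemma \ref{lemm.reindex} to reorganize the sum over such $\chi$ on all extensions $\F_{q^n}$ into a sum over $n\in Z_{2d}$ (orbits), Proposition \ref{prop.beta}\eqref{item.beta.B}--\eqref{item.beta.HD} to identify the resulting quantities with $\bbeta(\Mn)^{s}$, and Proposition \ref{prop.charsum.id2} to absorb the contribution of the order-$\leq 4$ characters (which are precisely the ones excluded from $Z_{2d}$) together with the $t=0$ and $t=\infty$ corrections into the single clean factor $(1-qT)$.

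Assembling: I would show that $-\log L(E_d/K,T)=\sum_{n\geq 1}\frac{T^n}{n}\big(q^n - \sum_{\Mn:\,|\Mn|\mid n}|\Mn|\,\bbeta(\Mn)^{n/|\Mn|}\big)$, where the $q^n$ term comes from the ``main term'' $q^n$ in the point count (this is the geometric/non-interesting part and yields the $(1-qT)$ factor), and the $\bbeta$ terms come from the character sums; exponentiating gives \eqref{eq.Lfunc}. The degree check — that the right-hand side has degree $1+\sum_{\Mn}|\Mn| = 1 + |Z_{2d}| = \deg\cond_d - 2$ matching the Grothendieck–Ogg–Shafarevich formula $\deg L = \deg\cond_d + 4(g-1) = \deg\cond_d - 4$... — wait, I should double-check the constant: for $E/\F_q(t)$ one has $\deg L(E/K,T)=\deg\cond_{E/K}-4$, and by Corollary \ref{coro.deg.cond} this is $2d'-3$ if $d'$ odd and $2d'-3$... so I would verify $|Z_{2d}| = \deg\cond_d - 5$ when $d$ even and the analogous count when odd, confirming that the factor $(1-qT)$ plus the $\O_q(Z_{2d})$ product have the right total degree; this sanity check catches sign/indexing errors.

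The main obstacle I anticipate is the bookkeeping at the ``boundary'': correctly accounting for the places $v=0$ (whose local contribution differs according to the parity of $d$, reflected in the different definitions of $Z_{2d}$), the place $v=\infty$, and the places in $M_d$, and checking that when one expands $\mathbf{1}_{z\in(\F_{q^n}^\times)^{2d}}=\frac{\gcd(2d,q^n-1)}{q^n-1}\sum_{\chi^{2d}=\trivcar}\chi(z)$ the normalization constants, the contribution of $\chi=\trivcar$ (giving the ``$q^n$'' main term), and the contribution of the characters of order $2$ and $4$ (handled by \refJac{eq.jac.1} and Proposition \ref{prop.charsum.id2}) all combine to leave precisely $(1-qT)$ and nothing else — no stray factors at $T=1$ or $T=-1$. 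Getting these finitely many low-order terms to cancel exactly against the bad-reduction Euler factors is where the computation is delicate; everything else is a mechanical application of Proposition \ref{prop.charsum.id}, Corollary \ref{coro.HD.B}, Lemma \ref{lemm.reindex} and Proposition \ref{prop.beta}.
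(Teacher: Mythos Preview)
Your strategy is exactly the paper's: pass to the logarithmic expansion (Lemma~\ref{lemm.expr.Lfunc}), treat $\tau=0$ and $\tau=\infty$ by hand, reindex the sum over $\tau\in\F_{q^m}^\times$ via $z=\tau^{2d}$ and the character expansion $|\{\tau:\tau^{2d}=z\}|=\sum_{\chi^{2d}=\trivcar}\chi(z)$, recognise the sums $B(\F_{q^m},\chi)$, peel off the characters of order dividing $4$ using Propositions~\ref{prop.charsum.id} and~\ref{prop.charsum.id2} (they collapse with the $0$ and $\infty$ terms to the single $-q^m$), and then reorganise via Lemma~\ref{lemm.reindex} and Proposition~\ref{prop.beta} into the product over $\O_q(Z_{2d})$.

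A few small points to tidy up: the character identity needs no normalising constant (it counts preimages, not the indicator), so the ``$?$'' and the later $\gcd$-weighted formula should just be $\sum_{\chi^{2d}=\trivcar}\chi(z)$; the places $v\in M_d$ require no separate correction since \eqref{eq.Wmodel} is already minimal there and the point count of the nodal cubic automatically gives $a_v=+1$; and in your assembled formula the sign in front of $\sum_{\Mn}|\Mn|\,\bbeta(\Mn)^{n/|\Mn|}$ should be $+$, not $-$ (check: $-\log(1-qT)=\sum q^nT^n/n$ and likewise for each $\bbeta$-factor). Your degree check is also salvageable: $|Z_{2d}|=2d-4$ for $d$ even and $2d-2$ for $d$ odd, so $1+|Z_{2d}|=\deg\cond_d-4$ in both cases, matching Grothendieck--Ogg--Shafarevich.
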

	
	 \begin{rema}
	\begin{enumerate}[(a)] 
	
	\item One can actually deduce from Theorem \ref{theo.Lfunc} an expression of $L(E_d/K,T)$ for \emph{all}  $d\geq 1$, as follows. 
	For any integer $d\geq 1$, write $d=d'p^{e}$ where $e\geq 0$ and $d'\geq 1$ is coprime to $p$. 
	As was pointed out earlier (Remark \ref{rema.alld}) the elliptic curves $E_{d'}$ and $E_{d}$,  being isogenous, share the same $L$-function. 
	Therefore, $L(E_d/K, T)=L(E_{d'}/K, T)$ can also be expressed with the help of Theorem \ref{theo.Lfunc}.
	
	\item In the special case when $2d\mid q-1$, one can choose a character $\cchi:\F_q^\times\to\Qbar^\times$ of exact order $2d$. 
	By Remarks \ref{rema.trivq.act} and \ref{rema.trivq.act2}, the expression \eqref{eq.Lfunc} then simplifies to
	\[L(E_d/K, T) 
	= (1-qT) \cdot \prod_{\substack{1\leq n\leq 2d-1 \\ n\neq d/2, 3d/2}}\left(1- B({\F_q}, \cchi^n)\cdot T\right).\]
	This special case of Theorem \ref{theo.Lfunc} was announced as Theorem \ref{itheo.Lfunc} in the introduction.	
	\end{enumerate}
	\end{rema}
	
	Before we start the proof of Theorem \ref{theo.Lfunc}, we remark that it directly leads to the following `combinatorial' expression of the analytic rank of $E_d/K$:
	\begin{coro}\label{coro.expr.rk}
	For any integer $d\geq 2$ coprime to $q$, 
	\begin{equation}\label{eq.rank}
	\ord_{T=q^{-1}}L(E_d/K, T) 
	= 1+\big|\big\{\Mn\in\O_q(Z_{2d}) \ : \ \bbeta(\Mn) = q^{|\Mn|} \big\}\big|
	\end{equation}
	\end{coro}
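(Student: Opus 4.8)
The plan is to deduce this immediately from Theorem \ref{theo.Lfunc} by using that the order of vanishing of a product of polynomials at a point is the sum of the orders of vanishing of the factors. Since Theorem \ref{theo.Lfunc} gives
\[L(E_d/K, T) = (1-qT)\cdot \prod_{\Mn\in\O_q(Z_{2d})}\left( 1 - \bbeta(\Mn)\cdot T^{|\Mn|}\right),\]
I would compute $\ord_{T=q^{-1}}$ of each factor on the right and add the results. The linear factor $1-qT$ contributes exactly $1$ to $\ord_{T=q^{-1}}$.

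Next I would analyse a typical factor $1-\bbeta(\Mn)\,T^{|\Mn|}$ for a fixed orbit $\Mn\in\O_q(Z_{2d})$. Evaluating at $T=q^{-1}$ yields $1-\bbeta(\Mn)\,q^{-|\Mn|}$, so $q^{-1}$ is a root of this factor precisely when $\bbeta(\Mn)=q^{|\Mn|}$. (By Proposition \ref{prop.beta}\eqref{item.beta.magnitude} one has $|\bbeta(\Mn)|=q^{|\Mn|}$, so $\bbeta(\Mn)q^{-|\Mn|}$ is a complex number of modulus $1$, and $\bbeta(\Mn)=q^{|\Mn|}$ is exactly the case where it equals $1$.) When this equality holds, the factor becomes $1-(qT)^{|\Mn|}=\prod_{\zeta^{|\Mn|}=1}(1-\zeta qT)$, a product of distinct linear factors of which only $1-qT$ vanishes at $T=q^{-1}$; equivalently, its derivative with respect to $T$ at $T=q^{-1}$ equals $-|\Mn|\,q\neq 0$. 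Hence $q^{-1}$ is then a \emph{simple} zero of that factor, and it is not a zero at all if $\bbeta(\Mn)\neq q^{|\Mn|}$.

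Putting these together, each orbit $\Mn$ with $\bbeta(\Mn)=q^{|\Mn|}$ contributes exactly $1$ to $\ord_{T=q^{-1}}L(E_d/K,T)$, every other orbit contributes $0$, and the factor $1-qT$ contributes $1$, giving
\[\ord_{T=q^{-1}}L(E_d/K, T) = 1+\big|\big\{\Mn\in\O_q(Z_{2d}) \ : \ \bbeta(\Mn) = q^{|\Mn|} \big\}\big|\]
as claimed. There is no real obstacle: the only point needing a moment's care is checking that a factor $1-\bbeta(\Mn)T^{|\Mn|}$ with $|\Mn|>1$ cannot contribute more than $1$ to the order of vanishing, which is exactly the simple-zero computation above, so that the contributions of the various factors simply add without interference.
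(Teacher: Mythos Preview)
Your proof is correct and follows exactly the same approach as the paper's own argument: factor $L(E_d/K,T)$ via Theorem~\ref{theo.Lfunc}, observe that $1-qT$ contributes $1$, and that each factor $1-\bbeta(\Mn)T^{|\Mn|}$ contributes $1$ or $0$ according as $\bbeta(\Mn)=q^{|\Mn|}$ or not. Your write-up is simply more explicit than the paper's about why the zero (when present) is simple.
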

	 
		\begin{proof} 
		Let us take a closer look at the factorisation \eqref{eq.Lfunc} of $L(E_d/K, T)$. 
		The factor $1-qT$ clearly contributes for $1$ to the order of vanishing of  $L(E_d/K, T)$ at $T=q^{-1}$. 
		For an orbit $\Mn\in\O_q(Z_{2d})$, the factor $1-\bbeta(\Mn)\cdot T^{|\Mn|}$ vanishes at order $1$  at $T=q^{-1}$ if and only if $\bbeta(\Mn)=q^{|\Mn|}$, and does not vanish otherwise. 
		Summing up these contributions yields \eqref{eq.rank}.
		\ProofEnd \end{proof}
	
	
\subsection{Proof of Theorem \ref{theo.Lfunc}}

In order to prove Theorem  \ref{theo.Lfunc}, it will be useful to have an alternative definition of $L(E_d/K, T)$ at hand.
For an integer $m\geq 1$ and a point $\tau\in\P^1(\F_{q^m})\smallsetminus\{\infty\}$, let $v_\tau$ be the place of $K$ corresponding to $\tau$. 
One may choose  a   polynomial $f_{d,\tau}\in\F_q[t, x]$, monic of degree $3$ in $x$,  such that $y^2=f_{d,\tau}(t, x)$ provides a minimal integral Weierstrass model of $E_d$ at $v_\tau$. 
One can then form the character sum
\[A_d(\tau, q^m) := -  \sum_{x\in\F_{q^m}}  \lambda_{\F_{q^m}}\!\left( f_{d,\tau}(\tau, x)\right),\]
where $\lambda_{\F_{q^m}}$ is the unique character of order $2$ on $\F_{q^m}^\times$. 
By a classical computation, one has
\begin{equation}\label{eq.Atau}
q^m+1 - |\widetilde{(E_d)}_\tau(\F_{q^m})| = q^m - \sum_{x\in\F_{q^m}} \left(1 + \lambda_{\F_{q^m}}\left( f_\tau(x)\right) \right) 
=- \sum_{x\in\F_{q^m}}  \lambda_{\F_{q^m}}\left( f_\tau(x)\right) =A_d(\tau, q^m).  
\end{equation}
Since $E_d$ has split multiplicative reduction at $v=\infty$, we have $q^m+1 - |\widetilde{(E_d)}_\infty(\F_{q^m})|  = q^m+1-|\F_{q^m}|=1$ for all~$m\geq 1$. 
Hence we are led to put $A_{d}(\infty, q^m):=1$ for all~$m\geq 1$. 

\begin{lemm}\label{lemm.expr.Lfunc}
The $L$-function of $E_d/K$ is given by 
\begin{equation}\label{eq.expr.Lfunc}
\log L(E_d/K, T) = \sum_{m=1}^{\infty} \left(\sum_{\tau\in\P^1(\F_{q^m})}A_d(\tau, q^m)\right) \cdot \frac{T^m}{m}.
\end{equation}
\end{lemm}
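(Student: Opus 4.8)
The plan is to start from the Euler product definition \eqref{eq.def.Lfunc} of $L(E_d/K,T)$ and take its logarithm. Recall the standard power-series identities $-\log(1-u) = \sum_{k\geq 1} u^k/k$ and $-\log\big(1 - a_v T^{\deg v} + q_v T^{2\deg v}\big) = -\log(1-\alpha_v T^{\deg v}) - \log(1-\beta_v T^{\deg v})$ where $\alpha_v,\beta_v$ are the inverse roots of the local factor (so $\alpha_v+\beta_v = a_v$, $\alpha_v\beta_v = q_v$). For a place $v$ of good reduction, one has the well-known identity $\alpha_v^k + \beta_v^k = q_v^k + 1 - |\widetilde{(E_d)}_v(\F_{q_v^k})|$ coming from the Weil conjectures for the elliptic curve $\widetilde{(E_d)}_v$ over $\F_{q_v}$; for a place of bad (here always multiplicative) reduction, the single local inverse root is $a_v = \pm 1$, and $a_v^k = q_v^k + 1 - |\widetilde{(E_d)}_v(\F_{q_v^k})|$ still holds since the nodal cubic has $q_v^k + 1 - a_v^k$ points over $\F_{q_v^k}$. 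In all cases, writing $N_{d,v}(q_v^k) := q_v^k + 1 - |\widetilde{(E_d)}_v(\F_{q_v^k})|$, the contribution of $v$ to $\log L(E_d/K,T)$ is $\sum_{k\geq 1} N_{d,v}(q_v^k)\, T^{k\deg v}/k$.

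Next I would reorganise this double sum (over places $v$ and over $k\geq 1$) into a sum over $m\geq 1$, collecting the coefficient of $T^m/m$. A place $v$ of degree $e$ contributes to the $T^m$-coefficient precisely when $e \mid m$, namely through the term with $k = m/e$, and its contribution to the coefficient of $T^m/m$ is $e\cdot N_{d,v}(q_v^{m/e}) = e\cdot N_{d,v}(q^{m})$ (since $q_v = q^e$, so $q_v^{m/e} = q^m$). Now invoke the standard bijection between the closed points $v$ of $\P^1_{/\F_q}$ of degree $e$ with $e \mid m$ and the $\F_{q^m}$-points $\tau$ of $\P^1$: each such $v$ corresponds to exactly $e$ points $\tau\in\P^1(\F_{q^m})$, all lying in one Galois orbit, and for each of them the reduction $\widetilde{(E_d)}_\tau$ over $\F_{q^m}$ is the base change to $\F_{q^m}$ of $\widetilde{(E_d)}_v$ over $\F_v = \F_{q^e}$, whence $N_{d,v}(q^m) = q^m + 1 - |\widetilde{(E_d)}_\tau(\F_{q^m})| = A_d(\tau,q^m)$ by \eqref{eq.Atau}. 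Therefore $\sum_{\substack{v:\ \deg v\mid m}} \deg v\cdot N_{d,v}(q^m) = \sum_{\tau\in\P^1(\F_{q^m})} A_d(\tau,q^m)$, where the contribution of $\tau = \infty$ is handled by the convention $A_d(\infty,q^m) := 1$ matching the split multiplicative reduction there. Assembling the coefficients over all $m\geq 1$ yields \eqref{eq.expr.Lfunc}.

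One point that needs a little care: the definition of $A_d(\tau,q^m)$ uses a minimal integral Weierstrass model $y^2 = f_{d,\tau}(t,x)$ of $E_d$ at $v_\tau$, so I should note that for a finite place $\tau$ the number $q^m + 1 - |\widetilde{(E_d)}_\tau(\F_{q^m})|$ computed from the \emph{reduced} cubic $y^2 = f_{d,\tau}(\tau,x)$ agrees with the $a_v$ (or $\alpha_v^k+\beta_v^k$) appearing in \eqref{eq.def.Lfunc}; this is exactly the content of \eqref{eq.Atau} together with the fact that the local factor in \eqref{eq.def.Lfunc} is by definition built from a minimal model. The value at $x$ with $f_{d,\tau}(\tau,x)=0$ contributes one point (not $1+\lambda(0)=1$, which is the same thing since $\lambda(0)=0$), and the point at infinity on the cubic contributes the extra $+1$, so the bookkeeping in \eqref{eq.Atau} is correct.

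\textbf{Main obstacle.} The only genuinely delicate step is the reindexing: keeping straight the relation $q_v = q^{\deg v}$, the factor $\deg v$ that appears when passing from $\sum_k \cdot/k$ to $\sum_m\cdot/m$, and the fact that each degree-$e$ place with $e\mid m$ accounts for exactly $e$ points of $\P^1(\F_{q^m})$ with the \emph{same} point count over $\F_{q^m}$. Once this combinatorial matching is set up cleanly, the identity \eqref{eq.expr.Lfunc} follows by comparing coefficients of $T^m/m$ on both sides.
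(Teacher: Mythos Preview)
Your approach is correct and matches the paper's own (very brief) proof, which simply says to expand $\log L(E_d/K,T)$ from the Euler product \eqref{eq.def.Lfunc}, rearrange terms, and invoke \eqref{eq.Atau}; the paper defers the details to \cite[\S2.2]{BaigHall}, and you have essentially reproduced those details.

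One small inaccuracy worth fixing: you write that the bad reduction is ``here always multiplicative,'' but Proposition~\ref{prop.badred} shows that $E_d$ has \emph{additive} reduction at $v=0$ when $d$ is odd. This does not break your argument --- in the additive case the local Euler factor is $1$ (so it contributes nothing to $\log L$), while the reduced cuspidal cubic over $\F_{q^m}$ has exactly $q^m+1$ points, so $A_d(0,q^m)=0$ and the two sides still match --- but the parenthetical claim and the accompanying justification (``the single local inverse root is $a_v=\pm 1$'') should be amended to cover this case as well.
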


	\begin{proof} 
	We refer the reader to \cite[\S2.2]{BaigHall} 
	for a detailed proof. 
	The result follows from expanding $\log L(E_d/K, T)$ as a power series in $T$ (from its definition \eqref{eq.def.Lfunc}), rearranging terms and using \eqref{eq.Atau}.
	\ProofEnd\end{proof}

	\begin{proof}[of Theorem \ref{theo.Lfunc}]
	Let $d\geq 1$ be coprime to $q$, and $E_d/K$ be the  elliptic curve defined by \eqref{eq.Wmodel}.
	We deduce the identity \eqref{eq.Lfunc} from the expression for $L(E_d/K, T)$ displayed in Lemma \ref{lemm.expr.Lfunc} by elucidating the double sum on the right-hand side. 
	From the preceding discussion, we already know that, for any integer~$m\geq 1$, we have
	\[ \sum_{\tau\in\P^1(\F_{q^m})} A_d(\tau, q^m)
	= A_d(\infty, q^m) - \sum_{\tau\in\F_{q^m}}\sum_{x\in\F_{q^m}}\lambda_{\F_{q^m}}\big(f_{d, \tau}(x)\big), \] 
	where $A_d(\infty, q^m)=1$. In fact, we can also give a straightforward expression of $A_d(0, q^m)$: 
	if $d$ is odd, then~$E_d$ has additive reduction at $0$ so that $A_d(0, q^n)=0$; 
	whereas when $d$ is even $E_d$ has good reduction, and $y^2=x(x^2-4)$ is a model of the reduction of $E_d$ modulo $v=0$. 
	Hence we have
	\begin{equation}\label{eq.ad.0}
	 A_d(0, q^m)= \begin{cases}
	-\sum_{x\in\F_{q^m}} \lambda_{\F_{q^m}}\big(x(x^2-4)\big) 
	& \text{if $d$ is even, } \\
	0 & \text{if $d$ is odd.}\end{cases}
	\end{equation}
	Now, for any $\tau\in\P^1(\F_{q^m})\smallsetminus\{0, \infty\} = \F_{q^m}^\times$, 
	we have seen in Remark \ref{rem.minimod} that a minimal integral model for the curve $E_d/K$ at $v_\tau$ is 	${y^2= x^3+t^{2d}x^2-4t^{2d}x}$.
	Thus, we have
	 \[\sum_{\tau \in\F_{q^m}^\times} A_d(\tau, q^m) 
	 = -\sum_{\tau\in\F_{q^m}^\times} \sum_{x\in\F_{q^m}} \lambda_{\F_{q^m}}\big(x^3 + \tau^{2d} \cdot x^2 - 4\tau^{2d}\cdot x\big).\]
	 For any $z\in\F_{q^m}^\times$, recall  from \cite[Lemma 2.5.21]{Cohen} that $\big|\{\tau\in\F_{q^m}^\times : \tau^{2d}=z\}\big|$	equals  $\sum_{\chi^{2d}=\trivcar} \chi(z)$, where the sum is over the multiplicative characters $\chi:\F_{q^m}^\times\to\Qbar^\times$ whose order divides $2d$. 
	 Using this identity, one can reindex the outer sum in the last displayed equality: after changing the order of summation, we obtain that
	 \[\sum_{\tau \in\F_{q^m}^\times} A_d(\tau, q^m) 
	 = - \sum_{\chi^{2d}=\trivcar}\left( \sum_{z\in\F_{q^m}^\times} \sum_{x\in\F_{q^m}} \chi(z)\cdot\lambda_{\F_{q^m}} \big(x^3 + z \cdot x^2 - 4z\cdot x\big)\right) = -\sum_{\chi^{2d}=\trivcar} B(\F_{q^m}, \chi),\] 
	 where $B({\F_{q^m}},\chi)$ is the character sum studied in \S\ref{sec.identity}. 
	 To avoid multiple subscripts, for any character $\chi$ on~$\F_{q^m}^\times$ we will denote  in the present proof  the   sum $B(\F_{q^m},\chi)$ by $B(q^m, \chi)$.
	
	 The last displayed identity and the previous remarks about $A_{d}(\infty, q^m)$, $A_{d}(0, q^m)$ lead to:
	\begin{align*} 
	\sum_{\tau\in\P^1(\F_{q^m})} A_d(\tau, q^m) 
	 &= A_d(\infty, q^m) + A_d(0, q^m) - \sum_{\chi^{2d}=\trivcar} B(q^m, \chi) \\
	 &= 1 + A_d(0, q^m) -  \sum_{\substack{\chi^{2d}=\trivcar \\ \chi^4=\trivcar }} B(q^m, \chi) 
	 -\sum_{\substack{\chi^{2d}=\trivcar \\ \chi^4\neq\trivcar }}  B(q^m, \chi). 	
	\end{align*}
	Now, by the first three cases of Proposition \ref{prop.charsum.id}, we can write 	 
	\[ \sum_{\substack{\chi^{2d}=\trivcar \\ \chi^4=\trivcar }} B( {q^m},\chi) 
	 = |\F_{q^m}| + 1 +\sum_{\substack{\chi^{2d}=\trivcar \\ \chi^2\neq \trivcar, \chi^4=\trivcar}} \ja_{\F_{q^m}}(\chi, \chi).\]
	We note that the second sum is empty if $d$ is odd.
	From \eqref{eq.ad.0} and Proposition \ref{prop.charsum.id2}, we then obtain that
	\begin{align*}
	1+A_d(0, q^m) - \sum_{\substack{\chi^{2d}=\trivcar \\ \chi^4=\trivcar }} B({q^m}, \chi)
	&= 	 \begin{cases}
	1 + \sum_{\theta} \ja_{\F_{q^m}}(\theta, \theta) - \left( q^m +1 + \sum_{\theta} \ja_{\F_{q^m}}(\theta, \theta)\right) &\text{if $d$ is even,} \\
	1 + 0 - \left(q^m+1 - 0 \right) & \text{if $d$ is odd.}
	\end{cases} \\
	&= -q^m \text{ in both cases,}
	\end{align*}
	 where the sums on the right-hand side are over characters $\theta$ on $\F_{q^m}^\times$ of exact order $4$.
	
	Regrouping our computations thus far, we have proved that: 
	 \begin{equation*}
	- \sum_{\tau\in\P^1(\F_{q^m})} A_d(\tau, q^m) 
	 = q^m + \sum_{\chi \in X_{2d}(\F_{q^m})} B(q^m, \chi),
	 \end{equation*}
	where the sum on the right-hand side is over the set $X_{2d}(q^m)$ consisting of characters $\chi$ of $\F_{q^m}^\times$ such that $\chi^{2d}$ is trivial and $\chi^4$ is nontrivial.
	Plugging this equality in \eqref{eq.expr.Lfunc}, we obtain that 
	\begin{align*}
	-\log L(E_d/K, T) 
	&= \sum_{m\geq 1} \frac{(qT)^m}{m} 
	+ \sum_{m\geq 1} \left( \sum_{\chi \in X_{2d}(\F_{q^m})} B(q^m, \chi)\right) \cdot\frac{T^m}{m}. \\
	&= -\log\big(1-qT\big)
	+ \sum_{m\geq 1} \left( \sum_{\chi \in X_{2d}(\F_{q^m})} B(q^m, \chi)\right) \cdot\frac{T^m}{m}.
	\end{align*}
	The first sum on the right-hand side clearly leads to the factor $1-qT$ on the right-hand side of \eqref{eq.Lfunc}. 
	There remains to handle the second sum; using Lemma \ref{lemm.reindex}, we can `reindex' the double sum to get
	\[ \sum_{m\geq 1} \left( \sum_{\chi \in X_{2d}(\F_{q^m})} B(q^m, \chi) \right) \cdot\frac{T^m}{m}
	=\sum_{n\in Z_{2d}}  \sum_{s\geq 1} B\big(q^{s\cdot|\Mn|}, \tt_n^{(s)}\big)\cdot\frac{T^{s\cdot |\Mn|}}{{s\cdot |\Mn|}}.\]
	For any $n\in Z_{2d}$ and any $s\geq 1$, Proposition \ref{prop.beta}\eqref{item.beta.HD} yields that $B\big(q^{s\cdot|\Mn|}, \tt_n^{(s)}\big) = \bbeta(n)^s$:
	hence, we have
	\begin{align*}
	 - \log\frac{L(E_d/K, T)}{1-qT} 
	 &=\sum_{n\in Z_{2d}}\sum_{s\geq 1} \bbeta(n)^s \cdot \frac{T^{s\cdot |\Mn|}}{s\cdot|\Mn|}	
	=\sum_{n\in Z_{2d}} \frac{1}{|\Mn|} \sum_{s\geq 1}   \frac{\big(\bbeta(n) \cdot T^{|\Mn|}\big)^s}{s}	\\
	&= - \sum_{n\in Z_{2d}} \frac{\log\big( 1 - \bbeta(n)\cdot T^{|\Mn|}\big)}{|\Mn|}
	= - \sum_{\Mn\in \O_q(Z_{2d})} \sum_{n\in|\Mn|} \frac{\log\big( 1 - \bbeta(n)\cdot T^{|\Mn|}\big)}{|\Mn|} \\
	&= - \sum_{\Mn\in \O_q(Z_{2d})}  \log\big( 1 - \bbeta(\Mn)\cdot T^{|\Mn|}\big).
	\end{align*}
	From which the desired expression for $L(E_d/K, T)$ follows immediately.
	\ProofEnd\end{proof}

 
\section{Berger's construction and the BSD conjecture}
\label{sec.construction}

Let $k=\F_q$ be a finite field of odd characteristic $p$. 
Consider the two rational functions $f,g:\P^1_{/k}\to\P^1_{/k}$ given by:
\[f(u) = \frac{2u-1}{2u(u-1)}, 		\qquad 		g(v) = v(v-1).\]
For any integer $d\geq 1$ coprime to $q$, let $C_d/k$ (resp. $D_d/k$) be a projective smooth model of the curve given affinely by $z^d=f(u)$ (resp. $w^d=g(v)$).
In the notations of \cite{Berger}, the pair $(f,g)$ is of type $[1,1][1,1][1,1][2]$ and we have chosen the value `$a=1/2$' for the parameter in $f$;
see example (6) in~\S4.3 and example (2) in~\S4.4 of \cite{Berger}.
Let us summarise the main results of \cite{Berger} in this case 
(see also \cite{Ulmer_MWJacobians} for a more detailed account). 

By a direct computation, Berger \cite[\S3]{Berger} shows that the smooth projective curve $X_d$ over $K=k(t)$ which is a model of the curve given in affine coordinates by 
\begin{equation}\label{eq.berg0}
f(u)=t^d\cdot g(v)
\end{equation}
has genus $1$.
Clearing denominators, one obtains that $X_d/K$ is given by
\begin{equation}\label{eq.berg1}
\big(u-\tfrac{1}{2}\big)	=t^d\cdot v(v-1) u(u-1)
\end{equation}
The curve $X_d$ obviously admits a $K$-rational point -- namely, $(u,v)=(1/2,0)$ -- so that it is actually an elliptic curve over $K$.

\paragraph{} 
Let us first find a Weierstrass model for this elliptic curve. 
Changing coordinates in~\eqref{eq.berg1} by letting  
\[(x,y)= \left({2t^d(u^{-1}-1)}, \ { 2t^d(u^{-1}-1)}(2v-1)\right):=H(u,v),\]
we find that $X_d/K$ admits the Weierstrass model
\begin{equation}\notag{} 
X_d:\qquad y^2=x\cdot\big(x^2+t^{2d}\cdot x-4t^{2d}\big).
\end{equation}
Moreover, the `change of coordinates' map $H: X_d\to E_d$ 
is birational,  
with birational inverse given by $(x,y)\mapsto (u,v) =\left( {2t^d}\cdot({x+2t^d})^{-1}, \ {(x+y)}\cdot({2x})^{-1}\right)$.
Therefore the elliptic curves $X_d$ and~$E_d$  are actually $K$-isomorphic. 


\paragraph{} 
We now describe in more details the geometry of the situation. 
For any $d\geq 1$ coprime to $q$,  consider the projective surface\footnote{The surface $S_d$ may be singular.} $S_d$ over $k$ defined affinely by  \eqref{eq.berg0}. 
There is a dominant rational map $\rho: C_d\times D_d\dashrightarrow S_d$, given in affine coordinates by $ \rho: \big((u,z),(v,w)\big) \mapsto (t =z/w ,u,v)$. 
The surface $S_d$ also admits a natural morphism $\pi_0:S_d\to\P^1$ extending the projection $(t,u,v)\mapsto t$. 
The generic fiber of $\pi_0:S_d\to\P^1$ is clearly the elliptic curve $X_d/K$, \emph{a.k.a.} $E_d/K$ by the previous paragraph.

Besides, the group $\mu_d$ of $d$th roots of unity in $\bar{k}$ acts on $C_d\times D_d$, \emph{via} $\zeta \cdot (u,z,v,w) := (u,\zeta z, v, \zeta w)$. 
We denote the quotient surface by $(C_d\times D_d)/\mu_d$ and by $\sigma: C_d\times D_d \to (C_d\times D_d)/\mu_d$ the quotient morphism%
\footnote{The quotient surface $(C_d\times D_d)/\mu_d$ may be singular.}.
Given its definition, it is clear that the map $\rho$ above factors through 
$\sigma$. 
Since both $\rho$ and $\sigma$ have degree~$d$, the induced rational map $
\tilde{\rho}: (C_d\times D_d)/\mu_d \dashrightarrow S_d$ is  birational. 
By blowing up the singular points of $(C_d\times D_d)/\mu_d$, one can resolve it into a regular surface (in a minimal way). 
We denote by $\Ecal_d/k$ that surface: by construction,  the successive blow-ups provide a birational map $b: \Ecal_d\dashrightarrow (C_d\times D_d)/\mu_d$. 
Composing this map with $\pi_0\circ\tilde{\rho}$, one endows $\Ecal_d$ with a morphism $\pi:\Ecal_d\to \P^1$ which `extends' $\pi_0:S_d\to\P^1$. 
In particular, the generic fiber of $\pi$ is the same as that of $\pi_0$, \ie{} it is $E_d/K$. 
Hence, $\pi: \Ecal_d\to\P^1$ is the minimal regular model of $E_d/K$. 
On the other hand, note that composing the birational inverse of  $b$ with~$\sigma$ yields a dominant rational map $w_d: C_d\times D_d \dashrightarrow \Ecal_d$.
Here is a diagram describing the situation:
\begin{center}
\begin{tikzcd}
&&&&&&C_d\times D_d 
\ar[ddllllll, bend right=10, dashed, color=myemph, "\text{(dominant)}" description, "w_d"' near end]
\ar[ddll, "\sigma"'  near start, "\text{(degree $d$)}" description, bend right=10]
\ar[ddrr, bend left=10, dashed, "\rho" near start, "\text{(degree $d$)}" description]&&&\\
\\\
\mathcal{E}_d 
\ar[rrrr, dashed, "b" near end,  "\text{(birational)}" 
description]
\ar[drrrrrrrr, bend right=9, color=myemph, "\pi"] &&&& 
(C_d\times D_d)/\mu_d 
\ar[rrrr, dashed, "\tilde{\rho}" near start, "\text{(birational)}" description] &&& & 
{S}_d
\ar[d, "\pi_0"] \\ 
&&&&&&&& \mathbb{P}^1 \\
\end{tikzcd}
\end{center}
Summarising the discussion above, $\pi : \mathcal{E}_d\to \mathbb{P}^1$ is an elliptic surface (over $k$) with generic fiber $E_d/K$, and the surface $\Ecal_d$ is dominated by a product of curves. 
 
The most striking feature of Berger's construction is the following: 

\begin{theo}[Berger]\label{theo.BSD}
 Let $d\geq 1$ be an integer. 
The elliptic curve $E_d/K$ given by \eqref{eq.Wmodel} satisfies the BSD conjecture. 
In particular, one has
\begin{equation*}
	\rk E_d(K) = \ord_{T=q^{-1}}L(E_d/K, T).
\end{equation*}
\end{theo}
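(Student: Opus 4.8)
The plan is to deduce the BSD conjecture for $E_d/K$ from the fact, established in the geometric discussion above, that the elliptic surface $\pi:\Ecal_d\to\P^1$ with generic fibre $E_d/K$ is dominated by a product of two curves $C_d\times D_d$. The strategy rests on the well-known principle --- going back to Tate, Artin--Swinnerton-Dyer, and made precise by Shioda, and more recently exploited systematically by Berger and Ulmer --- that the BSD conjecture for the generic fibre of an elliptic surface $\mathcal{X}\to\P^1$ over a finite field is equivalent to the Tate conjecture for the surface $\mathcal{X}$, and that the Tate conjecture is known (by a theorem of Tate, using work of Artin) for any surface dominated by a product of curves. So the skeleton is: (1) reduce BSD for $E_d/K$ to the Tate conjecture for $\Ecal_d$; (2) invoke the product-of-curves structure to conclude that $\Ecal_d$ satisfies the Tate conjecture; (3) read off the rank equality, which is the $\ord_{T=q^{-1}}$-part of BSD, together with the finiteness of the Brauer group / Tate--Shafarevich group which comes along for the ride.

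In more detail: first I would recall (citing \cite{Berger}, or \cite{Ulmer_MWJacobians}, and ultimately the foundational references such as \cite{Gross_bsd} App. D, Tate's Bourbaki seminar, or Grothendieck's \emph{Dix exposés}) the equivalence, for a nonisotrivial elliptic curve $E/K$ with minimal regular model $\pi:\mathcal{X}\to\P^1$ over $\F_q$, between: the Tate conjecture for the surface $\mathcal{X}$ over $\F_q$; the finiteness of the Brauer group $\mathrm{Br}(\mathcal{X})$; the finiteness of the Tate--Shafarevich group $\Sha(E/K)$; and the full BSD conjecture for $E/K$ (including the rank formula $\rk E(K)=\ord_{T=q^{-1}}L(E/K,T)$ and the leading-coefficient formula). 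Since $E_d/K$ was shown above to be nonisotrivial and to have minimal regular model $\pi:\Ecal_d\to\P^1$, it suffices to verify the Tate conjecture for $\Ecal_d$.

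Second, I would use the diagram built in the preceding paragraphs: there is a dominant rational map $w_d:C_d\times D_d\dashrightarrow\Ecal_d$ of smooth projective surfaces over $k=\F_q$. By a theorem of Tate (\cite{Tate_Conjectures}, resolved using Artin's work, and as used in this exact form by Berger and Ulmer), the Tate conjecture holds for any smooth projective surface over a finite field that is dominated by a product of two smooth projective curves --- this is because the Tate conjecture is known for abelian varieties over finite fields (Tate), hence for products of curves via their Jacobians, and it descends along dominant maps between smooth projective surfaces (a dominant map induces a surjection on the relevant piece of $H^2_{\mathrm{\acute et}}$ compatible with cycle classes, so surjectivity of the cycle class map is inherited by the target). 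Applying this with $\mathcal{X}=\Ecal_d$ yields the Tate conjecture for $\Ecal_d$. Combined with step (1), this gives BSD for $E_d/K$, and in particular the displayed rank equality.

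The main obstacle --- or rather the main point requiring care rather than genuine difficulty, since all the heavy machinery is imported --- is the bookkeeping in step (1): one must make sure $\Ecal_d$ really is the \emph{minimal regular} model (which was argued above via resolving the singularities of $(C_d\times D_d)/\mu_d$) and that the birational map $b$ and the dominant map $w_d$ are between \emph{smooth projective} surfaces, so that the descent of the Tate conjecture along $w_d$ is legitimate and so that the translation between the Tate conjecture for $\Ecal_d$ and BSD for its generic fibre applies verbatim. A secondary subtlety is the coprimality hypothesis: the construction of $C_d$, $D_d$ and the $\mu_d$-action required $\gcd(d,q)=1$, so strictly this argument proves BSD for $E_d/K$ when $p\nmid d$; the general case $d=d'p^e$ then follows because $E_d$ and $E_{d'}$ are $K$-isogenous (Remark~\ref{rema.alld}), hence have the same $L$-function and the same Mordell--Weil rank, and BSD is isogeny-invariant. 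I would state this reduction explicitly at the start of the proof so that the bulk of the argument may assume $\gcd(d,q)=1$.
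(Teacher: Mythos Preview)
Your proposal is correct and follows essentially the same route as the paper's proof: reduce to $\gcd(d,q)=1$ by isogeny invariance of BSD, then use the equivalence between BSD for $E_d/K$ and the Tate conjecture for its minimal regular model $\Ecal_d$, and finally invoke the known case of the Tate conjecture for surfaces dominated by a product of curves, applied via the map $w_d:C_d\times D_d\dashrightarrow\Ecal_d$ constructed above. The paper's proof is terser and cites \cite{Tate_BSD}, \cite{Tate_conj}, and \cite{Milne_ArithDual_2nd} for the three ingredients, but the logical skeleton is the same as yours.
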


\begin{proof} 
We only sketch the argument and refer the interested reader to \cite[\S2]{Berger} or~\cite{Ulmer_MWJacobians} for a detailed proof. 
As before, we write $d=d'p^{e}$ where $e\geq 0$ and $d'\geq 1$ is coprime to $q$.
The truth of the BSD conjecture is invariant under isogeny (see \cite[Chap. I, Thm. 7.3]{Milne_ArithDual_2nd}): 
since $E_d$ and $E_{d'}$ are isogenous (see Remark~\ref{rema.alld}), it  suffices to prove the Theorem in the case where $d=d'$ is coprime to $q$.

Hence we assume that $d\geq 1$ is coprime to $q$ and we denote by $\pi:\Ecal_d\to\P^1$ the minimal regular model of~$E_d/K$. 
Proving the `rank part' of the BSD conjecture for $E_d/K$ is equivalent to proving the Tate conjecture for the surface $\Ecal_d/\F_q$ (see \cite{Tate_BSD}).  
That conjecture is known to hold for surfaces which admit a dominant rational map from a product of curves (see \cite{Tate_conj}).
By the discussion above, $\Ecal_d$ admits such a map $w_d:C_d\times D_d\dashrightarrow \Ecal_d$, 
hence the Theorem. 
\ProofEnd\end{proof}

More generally, the construction described by Berger in \cite[\S2-\S3]{Berger}  provides many examples of Kummer families of elliptic surfaces which are dominated by products of curves.
For all these elliptic surfaces, the Tate conjecture holds.
Therefore, for all the corresponding elliptic curves over $K$, the BSD conjecture is known to hold. 
Berger's construction thus provides a large range of examples where Ulmer's theorem about unbounded analytic ranks (see Theorem \ref{theo.Ulmer}) can be unconditionally translated into an `unbounded algebraic rank' result.


\section{Supersingular integers and ranks}
\label{sec.supersing}

Let $\F_q$ be a finite field of characteristic $p\geq 3$ and $K=\F_q(t)$. 
For any integer $d\geq 1$, consider the elliptic curve $E_d/K$ defined by \eqref{eq.Wmodel}. For the remainder of the article, we let
\begin{equation}
\rank(d):=\rk\big(E_d(K)\big).
\end{equation}
The goal of the next section will be to study the behaviour of the sequence $(\rank(d))_{d\geq 1}$.
In the present section, we first use the above results to give a combinatorial expression of $\rank(d)$ for certain values of $d$.

\subsection{Supersingular integers and Jacobi sums}

	Recall that an integer $D\geq 1$ is called \emph{supersingular}
	if $D$ divides $q^a +1$ for some integer $a\geq 1$. 
	We denote by $\Ss_q$ the set of supersingular integers.  
	Since $q$ is odd, note that $2d\in\Ss_q$ for any odd supersingular $d$. 
	
	Supersingular integers are of interest to us because of the following results:
	
	\begin{lemm}[Ulmer]\label{lemm.ulmer}
	Let $D\geq 1$ be an \emph{even} supersingular integer. 
	Then the order of $q$ modulo $D$ is even. 
	For any orbit $\Mn\in\O_q(\Z/D\Z\smallsetminus\{0, D/2\})$, its length $|\Mn|$ is even and $D$ divides $n(q^{|\Mn|/2}+1)$.
	
	Moreover, if $D=q^a+1$ for some $a\geq 1$, the order of $q$ modulo $D$ equals $2a$.
	\end{lemm}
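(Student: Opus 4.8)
The plan is to work throughout with the order $o_q(D)$ of $q$ modulo $D$ and to exploit the defining property of supersingularity. First I would establish that $o_q(D)$ is even: since $D$ is supersingular, there is some $a\geq 1$ with $D\mid q^a+1$, i.e. $q^a\equiv -1\bmod D$. As $D$ is even and $D\geq 2$, we have $-1\not\equiv 1\bmod D$, so $q^a\not\equiv 1$, hence $o_q(D)\nmid a$; but $q^{2a}\equiv 1\bmod D$ forces $o_q(D)\mid 2a$. From $o_q(D)\mid 2a$ and $o_q(D)\nmid a$ one deduces that $o_q(D)$ is even and, writing $o_q(D)=2b$, that $b\mid a$ while $b\nmid a$ is false — more precisely $o_q(D)/\gcd(o_q(D),a)=2$, so $q^{o_q(D)/2}\equiv q^a{}^{\,a/b\cdot(\ldots)}$; the clean way to phrase it is: $q^{o_q(D)/2}$ has order $2$ in $(\Z/D\Z)^\times$, and the only element of order dividing $2$ that is a power of $q$ and squares to $1$ but is $\neq 1$ must be $q^a\equiv-1$ if that is the unique such element; in general one shows $q^{o_q(D)/2}\equiv -1\bmod D$ directly by checking it is a square root of $1$ distinct from $1$ and that $q^a$ generates the same subgroup. (The subgroup $\langle q\rangle_D$ is cyclic of even order $2b$, it contains $q^a\equiv -1$, and $-1$ is the unique element of order $2$ in any cyclic group, so the unique element of order $2$ in $\langle q\rangle_D$ is $-1$; that element is $q^{b}=q^{o_q(D)/2}$, whence $q^{o_q(D)/2}\equiv -1\bmod D$.)

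Next I would transfer this to an arbitrary orbit $\Mn\in\O_q(\Z/D\Z\smallsetminus\{0,D/2\})$. By \S\ref{sec.q.act} the length is $|\Mn|=o_q\big(D/\gcd(D,n)\big)$ for any representative $n$. Set $D':=D/\gcd(D,n)$. One needs $D'$ even: this is where the exclusion of $0$ and $D/2$ matters — if $D'$ were odd then $\gcd(D,n)$ would be divisible by the full $2$-part of $D$, and combined with $D'\mid$ (stuff) one checks $n\equiv 0$ or $n\equiv D/2\bmod D$, contradicting $n\notin\{0,D/2\}$. (Concretely: $D'$ odd means $2^{v_2(D)}\mid n$; since also $n<D$, the only such residues are those $n$ with $D/\gcd(D,n)$ odd, and a short case check shows these are exactly the multiples of $D/2$ that are excluded.) Also $D'$ is still supersingular, being a divisor of $D$ and hence of $q^a+1$. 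So the first part applies to $D'$: $|\Mn|=o_q(D')$ is even, and $q^{|\Mn|/2}\equiv -1\bmod D'$, i.e. $D'\mid q^{|\Mn|/2}+1$. Multiplying by $\gcd(D,n)$ and using $n=\gcd(D,n)\cdot(n/\gcd(D,n))$ with the second factor an integer gives $D\mid \gcd(D,n)\,(q^{|\Mn|/2}+1)\mid n\,(q^{|\Mn|/2}+1)$, which is the claimed divisibility.

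Finally, for $D=q^a+1$ itself: we already know $o_q(D)\mid 2a$ and $o_q(D)$ is even, say $o_q(D)=2b$ with $b\mid a$. Then $q^{b}\equiv -1\bmod D$, so $q^{a-b}\equiv q^a\cdot q^{-b}\equiv (-1)(-1)^{-1}\equiv 1\bmod D$; since $o_q(D)=2b$ this forces $2b\mid a-b$, i.e. $a\equiv b\bmod 2b$, so $a-b$ is a nonnegative multiple of $2b$, and combined with $b\mid a$ and $a/b$ forced to be odd we get $a/b=1$, i.e. $a=b$ and $o_q(D)=2a$. I expect the main obstacle to be the middle step — cleanly characterizing when $D/\gcd(D,n)$ fails to be even and tying that precisely to the excluded residues $\{0,D/2\}$; the endpoints are elementary cyclic-group facts, but this divisibility bookkeeping needs to be done carefully (essentially: $D/\gcd(D,n)$ is odd $\iff v_2(n)\geq v_2(D)$ $\iff$ $n$ is a multiple of $2^{v_2(D)}$, and among residues mod $D$ with $n\neq 0$ one must rule these out — noting $D/2$ is exactly the problematic one when $v_2(D)=1$, and more generally the argument should be phrased via $2$-adic valuations so that it is uniform in $v_2(D)$).
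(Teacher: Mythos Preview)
Your overall strategy is the right one and matches the standard argument (the paper itself only cites \cite{Ulmer_LargeRk} and \cite{Griffon_PHD}), but there are two genuine gaps.

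\textbf{Middle step.} You try to show that $D':=D/\gcd(D,n)$ is \emph{even} whenever $n\notin\{0,D/2\}$, and you flag this as the ``main obstacle''. In fact this claim is false: take $D=12$ and $n=4$, then $D'=3$. The error stems from Step~1, where you used ``$D$ even and $D\geq 2$'' to get $-1\not\equiv 1\bmod D$; but $-1\equiv 1\bmod 2$, so what you really need (and use) there is $D\geq 3$. Carrying the correct hypothesis over, you only need $D'\geq 3$, and this is exactly what the exclusion of $\{0,D/2\}$ buys: $D'=1\iff n\equiv 0$, and $D'=2\iff \gcd(D,n)=D/2\iff n\equiv D/2\bmod D$. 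With $D'\geq 3$ one has $-1\not\equiv 1\bmod D'$, and since $D'\mid D\mid q^a+1$ is still supersingular, your Step~1 argument applies verbatim to $D'$ (regardless of its parity), giving $|\Mn|=o_q(D')$ even and $q^{|\Mn|/2}\equiv -1\bmod D'$, whence $D\mid n(q^{|\Mn|/2}+1)$ as you wrote.

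\textbf{Last step.} For $D=q^a+1$ you correctly obtain $o_q(D)=2b$ with $b\mid a$ and $q^b\equiv -1\bmod D$, and then deduce that $a/b$ is odd. But ``$a/b$ odd'' does not imply $a/b=1$; your chain $2b\mid a-b$ is equivalent to $a/b$ odd and adds nothing. The missing ingredient is a size comparison using the specific shape of $D$: from $q^b\equiv -1\bmod(q^a+1)$ one gets $(q^a+1)\mid(q^b+1)$, and since $1\leq b\leq a$ gives $0<q^b+1\leq q^a+1$, this forces $q^b+1=q^a+1$, i.e.\ $b=a$. That closes the argument.
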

	For proofs of this lemma, we refer the reader to \cite[Lemma 8.2]{Ulmer_LargeRk} or to \cite[Lemmes 2.4.1 \& 2.4.2]{Griffon_PHD} where a more detailed argument is given.
	
	\begin{lemm}[Shafarevich -- Tate / Ulmer]\label{lemm.shaftate} 
	Let $\F_{Q^2}/\F_Q$ be a quadratic extension of finite fields of odd characteristic. 
	Let $\chi_1, \chi_2:\F_{Q^2}^\times\to\Qbar^\times$ be  nontrivial characters on $\F_{Q^2}^\times$ such that $\chi_1\chi_2$ is nontrivial. 
	If the restrictions of $\chi_1$ and $\chi_2$ to  $\F_Q^\times$ are trivial, we have 
	\begin{equation}\notag
	\ja_{\F_{Q^2}}(\chi_1, \chi_2)=-Q=-\sqrt{Q^2}. 
	\end{equation}
	\end{lemm}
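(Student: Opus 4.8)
The plan is to prove Lemma \ref{lemm.shaftate} via the classical Hasse--Davenport--style argument relating Jacobi sums over $\F_{Q^2}$ to Gauss sums, combined with the observation that a character of $\F_{Q^2}^\times$ trivial on $\F_Q^\times$ must be a power of the norm character. First I would recall that $\F_Q^\times$ is the unique subgroup of index $Q+1$ in the cyclic group $\F_{Q^2}^\times$, so a character $\chi$ of $\F_{Q^2}^\times$ is trivial on $\F_Q^\times$ exactly when $\chi = \psi \circ \norm_{\F_{Q^2}/\F_Q}^{?}$... more precisely, exactly when $\chi$ factors through the norm-one subgroup, equivalently $\chi^{Q+1} = \trivcar$; such a $\chi$ has order dividing $Q+1$. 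The key structural fact to extract is: if $\chi$ has order dividing $Q+1$ and is nontrivial, then $\chi = \bar\chi^{\,Q}$, i.e. $\chi^Q = \bar\chi = \chi^{-1}$, since $x^Q = \bar x^{-1}\cdot(x\bar x)$ and $x\bar x \in \F_Q^\times$ kills $\chi$. Wait---let me instead use the Frobenius: for $x \in \F_{Q^2}^\times$ one has $\chi(x^Q) = \chi(x)^Q$, and $x^Q = x^{-1}\cdot x^{Q+1}$ with $x^{Q+1} = \norm_{\F_{Q^2}/\F_Q}(x) \in \F_Q^\times$, so $\chi(x)^Q = \chi(x)^{-1}$, hence $\chi^{Q+1} = \trivcar$ and $\chi^Q = \chi^{-1} = \bar\chi$.

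Next I would pass to Gauss sums: write $\ja_{\F_{Q^2}}(\chi_1,\chi_2) = \mathfrak g(\chi_1)\mathfrak g(\chi_2)/\mathfrak g(\chi_1\chi_2)$, valid since none of $\chi_1,\chi_2,\chi_1\chi_2$ is trivial (this is the standard Gauss--Jacobi relation, \cite[Chap.~II]{Cohen}). Then I would invoke the fact that for a nontrivial character $\chi$ on $\F_{Q^2}^\times$ with $\chi^Q = \bar\chi$, the Gauss sum over $\F_{Q^2}$ satisfies $\mathfrak g(\chi) = \chi(c)\cdot Q$ for a suitable sign/root-of-unity factor --- this is the content of the Stickelberger/Hasse--Davenport evaluation of Gauss sums attached to characters of $\F_{Q^2}^\times$ trivial on $\F_Q^\times$; concretely $\mathfrak g_{\F_{Q^2}}(\chi) = Q$ when $\chi$ has conductor exponent $2$ over $\F_Q$ and restricts trivially, up to the explicit root of unity. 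I would cite \cite[Lemma 8.2]{Ulmer_LargeRk} (or \cite{ShaTate_Rk}) for the precise statement, since that is where the sign bookkeeping is carried out; substituting into the Gauss--Jacobi relation, the three factors of $Q$ combine to give $\mathfrak g(\chi_1)\mathfrak g(\chi_2)/\mathfrak g(\chi_1\chi_2) = \pm Q$, and a careful sign chase (using $\mathfrak g(\chi)\overline{\mathfrak g(\chi)} = Q^2$ together with $\mathfrak g(\chi)\mathfrak g(\bar\chi) = \chi(-1)Q^2$) pins down the sign as $-1$, giving $\ja_{\F_{Q^2}}(\chi_1,\chi_2) = -Q$.

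Alternatively --- and this may be cleaner to write --- I would avoid Gauss sums entirely and argue directly. For $\chi$ trivial on $\F_Q^\times$, one can reindex the defining sum: write the Jacobi sum over $\F_{Q^2}$ and use that $\chi$ factors through $x \mapsto x/\bar x$ on $\F_{Q^2}^\times$ (the norm-one torus $T$, which is cyclic of order $Q+1$); counting fibers of $\F_{Q^2}^\times \to T$ shows each fiber has size $Q-1$, and the relation $x_1 + x_2 = 1$ descends to a genuinely $1$-dimensional condition on $T \times T$, so the Jacobi sum becomes (up to the factor $Q-1$, absorbed correctly) a character sum over the torus of size roughly $Q$ --- this is essentially the statement that the corresponding $\overline{\mathbb Q}_\ell$-sheaf on $\mathbb G_m$ is geometrically a direct sum of rank-one pieces that become trivial after the degree-$(Q+1)$ cover, forcing the eigenvalue to be $\pm Q$ rather than of absolute value $\sqrt Q$. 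The main obstacle, in either approach, is the \emph{sign}: the absolute value $|\ja_{\F_{Q^2}}(\chi_1,\chi_2)| = Q$ follows immediately from \refJac{eq.jac.absval}, but showing the sign is $-1$ (not $+1$) requires the genuine input --- either Stickelberger's theorem on the prime ideal factorization of Gauss sums, or the Weil bound combined with a purity/weight-monodromy argument, or the explicit Shafarevich--Tate computation. I would therefore structure the proof so that the absolute-value statement is immediate and the sign determination is quoted from \cite[Lemma 8.2]{Ulmer_LargeRk}/\cite[\S2.4]{Griffon_PHD}, where exactly this $-Q$ appears, rather than reproving it here.
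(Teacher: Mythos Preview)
Your proposal is correct and aligns with the paper's treatment. The paper does not actually give a proof: it simply cites \cite[Lemma]{ShaTate_Rk} and \cite[Lemma~8.3]{Ulmer_LargeRk} (note: Lemma~8.3, not~8.2 as you wrote --- Lemma~8.2 is the combinatorial orbit-length statement used for Lemma~\ref{lemm.ulmer}), and remarks that those references state the result for Gauss sums, with the translation to Jacobi sums via \cite[Prop.~2.5.14]{Cohen}. Your first approach --- reduce to Gauss sums through the Gauss--Jacobi relation, then quote the evaluation of $\mathfrak g(\chi)$ for $\chi$ trivial on $\F_Q^\times$ from the same references --- is therefore exactly what the paper does, only with more of the underlying structure (the identity $\chi^Q=\chi^{-1}$, the absolute value from \refJac{eq.jac.absval}) spelled out. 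Your torus approach is a genuine alternative, but it is not needed here and the paper does not pursue it.
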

	The proof  can be found in \cite[Lemma]{ShaTate_Rk} or \cite[Lemma 8.3]{Ulmer_LargeRk}.
	In both references, the result is phrased in terms of Gauss sums, but is easily translated to the desired identity, see \cite[Prop. 2.5.14]{Cohen}. 
	
	\medskip
	From the above lemmas, we deduce the following result about the sums $\bbeta(\Mn)$ defined in \S\ref{sec.bbeta}:
	
	\begin{theo}\label{theo.ShaTate}
	Let $d\geq 1$ be a integer such that $2d\in\Ss_q$. 
	Then, for all $\Mn \in \O_q(Z_{2d})$, we have $\bbeta(\Mn)=q^{|\Mn|}$.  	
	\end{theo}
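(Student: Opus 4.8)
The plan is to compute $\bbeta(\Mn)$ directly from its definition \eqref{eq.defi.jacobi} using the supersingularity hypothesis, by applying Lemma \ref{lemm.shaftate} to each of the two Jacobi sums appearing in the product. First I would fix an orbit $\Mn\in\O_q(Z_{2d})$, set $\ell:=|\Mn|$ and $Q:=q^\ell$, and recall from Lemma \ref{lemm.ulmer} (applicable since $2d$ is even and supersingular, and since $Z_{2d}$ is contained in $\Z/2d\Z\smallsetminus\{0,d\}$, with $d=D/2$ for $D=2d$) that $\ell$ is even and that $2d\mid n(q^{\ell/2}+1)$ for any representative $n\in\Mn$. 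Writing $Q=(q^{\ell/2})^2$, this exhibits $\F_Q$ as a quadratic extension $\F_{\bar Q^2}/\F_{\bar Q}$ with $\bar Q=q^{\ell/2}$, and the divisibility $2d\mid n(\bar Q+1)$ is exactly the condition needed to ensure that $\tt_n$ — which has order $2d/\gcd(2d,n)$ dividing $2d$ — restricts trivially to $\F_{\bar Q}^\times$. Indeed, $\tt_n(x)^{\bar Q+1}=\tt_n(x^{\bar Q+1})$, and for $x\in\F_{\bar Q}^\times$ one has $x^{\bar Q+1}=x^2$, so the restriction of $\tt_n$ to $\F_{\bar Q}^\times$ has order dividing $2$; a slightly more careful check using $2d\mid n(\bar Q+1)$ shows it is in fact trivial. (Here I would need to be mildly careful about the case distinction, but since $n\in Z_{2d}$ the order of $\tt_n$ does not divide $4$, and in particular $\tt_n^2$ is nontrivial.)

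Next I would verify the hypotheses of Lemma \ref{lemm.shaftate} for the two pairs $(\chi_1,\chi_2)=(\tt_n,\tt_n)$ and $(\chi_1,\chi_2)=(\lambda_{\F_Q}\tt_n^2,\tt_n^{-1})$. For the first pair: $\tt_n$ is nontrivial and $\tt_n\cdot\tt_n=\tt_n^2$ is nontrivial (as $n\in Z_{2d}$), and both restrict trivially to $\F_{\bar Q}^\times$ by the previous paragraph, so Lemma \ref{lemm.shaftate} gives $\ja_{\F_Q}(\tt_n,\tt_n)=-\bar Q$. For the second pair: $\tt_n^{-1}$ is nontrivial; $\lambda_{\F_Q}\tt_n^2$ is nontrivial because $\tt_n^2$ is nontrivial and has order not dividing $2$ (its order is $2d/\gcd(2d,n)$ divided by $\gcd$ with $2$, which is $>2$ since the order of $\tt_n$ does not divide $4$), so it cannot equal $\lambda_{\F_Q}$; and their product $\lambda_{\F_Q}\tt_n$ is nontrivial for the same reason. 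It remains to see that $\lambda_{\F_Q}\tt_n^2$ restricts trivially to $\F_{\bar Q}^\times$: this holds because $\tt_n^2$ does (by the above) and because $\lambda_{\F_Q}$ restricted to $\F_{\bar Q}^\times$ is trivial — since $\F_{\bar Q}^\times=(\F_Q^\times)^{\bar Q+1}$ consists of $(\bar Q+1)$-th powers and $\bar Q+1$ is even, every element of $\F_{\bar Q}^\times$ is a square in $\F_Q^\times$. Hence Lemma \ref{lemm.shaftate} applies again and gives $\ja_{\F_Q}(\lambda_{\F_Q}\tt_n^2,\tt_n^{-1})=-\bar Q$.

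Finally I would assemble these into
\[
\bbeta(\Mn)=\tt_n^2(4)\cdot(-\bar Q)\cdot(-\bar Q)=\tt_n^2(4)\cdot\bar Q^2=\tt_n^2(4)\cdot q^\ell,
\]
and conclude by checking that $\tt_n^2(4)=1$. This is where I expect the only residual subtlety: $4=2^2$ is a square in $\F_q^\times$, hence a square in $\F_Q^\times$, and more to the point $4\in\F_{\bar Q}^\times$; since $\tt_n^2$ restricts trivially to $\F_{\bar Q}^\times$ we get $\tt_n^2(4)=1$ immediately. (Alternatively $\tt_n(4)=\tt_n(2)^2$ lies in the image of squaring, and $\tt_n^2$ of a square involves $\tt_n$ of a fourth power, which one pushes through $\bar Q+1$ again.) Putting this together yields $\bbeta(\Mn)=q^{|\Mn|}$, as claimed. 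The main obstacle — really the only place demanding care — is the bookkeeping showing that the relevant characters restrict trivially to $\F_{q^{\ell/2}}^\times$, i.e.\ translating the divisibility $2d\mid n(q^{\ell/2}+1)$ from Lemma \ref{lemm.ulmer} into the triviality of $\tt_n|_{\F_{q^{\ell/2}}^\times}$, $\tt_n^2|_{\F_{q^{\ell/2}}^\times}$ and $\lambda_{\F_Q}|_{\F_{q^{\ell/2}}^\times}$; everything else is a direct substitution.
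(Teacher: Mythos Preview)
Your proposal is correct and follows essentially the same route as the paper's proof: use Lemma~\ref{lemm.ulmer} to see that $|\Mn|$ is even and that $2d\mid n(q^{|\Mn|/2}+1)$, deduce that $\tt_n$ (hence $\tt_n^{\pm 1}$, $\tt_n^2$, and $\lambda\tt_n^2$) restricts trivially to the subfield $\F_{q^{|\Mn|/2}}^\times$, apply Lemma~\ref{lemm.shaftate} to each Jacobi sum, and conclude $\bbeta(\Mn)=(-q^{|\Mn|/2})^2=q^{|\Mn|}$ after noting $\tt_n^2(4)=1$. The only cosmetic difference is notation (your $Q,\bar Q$ are the paper's $Q^2,Q$), and your first pass at the triviality of $\tt_n|_{\F_{\bar Q}^\times}$ is a little indirect---the cleanest check is the one-line computation $\tt_n(x)=\tt(x)^{(\bar Q^2-1)n/(2d)}=(\tt(x)^{\bar Q-1})^{(\bar Q+1)n/(2d)}=1$ for $x\in\F_{\bar Q}^\times$, using that $(\bar Q+1)n/(2d)\in\Z$.
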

	 	
		\begin{proof} 
		Let $n\in Z_{2d}$: 
		by Lemma \ref{lemm.ulmer} (applied to $D=2d$), the length $|\Mn|$ of its orbit $\Mn\in\O_q(Z_{2d})$ is even.
		In particular, $Q:=q^{|\Mn|/2}$ is an integral power of $q$, and 	the finite field extension $\F_{Q^2}/\F_{Q}$ is   quadratic.
		By~Lemma \ref{lemm.ulmer} again, $(Q+1)n/e$ is an integer and  by construction of  $\tt_n:\F_{Q^2}^\times\to\Qbar^\times$, we deduce that 
		\[\forall x\in\F_Q^\times, \quad
		\tt_n(x) 
		= \tt(x)^{(Q^2-1)n/e} 
		= \tt_n(x^{Q-1})^{(Q+1)n/e} 
		= \tt_n(1)^{(Q+1)n/e} = 1.\]
		The character $\cchi_1=\tt_n$ is thus a nontrivial character of $\F_{Q^2}^\times$ whose restriction to $\F_Q^\times$ is trivial; and the character $\cchi_2=\tt_n^{-1}$ obviously has the same property. 
		Consider now the character $\cchi_3=\lambda_{\F_{Q^2}}\tt_n^2$: it cannot be trivial otherwise $\tt_n$ would have order dividing $4$, which does not happen for $n\in Z_{2d}$, see \S\ref{sec.char}. 
		Nonetheless, $\cchi_3$ has trivial restriction to $\F_Q^\times$,  being the product of two characters whose restrictions to $\F_Q^\times$ are trivial,  (the restriction of $\lambda_{\F_{Q^2}}$ is trivial because any element of $\F_Q^\times$ becomes a square in $\F_{Q^2}^\times$).
		Besides, note  that $\cchi_1^2=\tt_n^2$ and $\cchi_2\cchi_3 = \lambda_{\F_{Q^2}}\tt_n$ are nontrivial. 
		
		We can thus apply Lemma \ref{lemm.shaftate} and  obtain that the  Jacobi sums $\ja_{\F_{Q^2}}(\cchi_1, \cchi_1)$ and $\ja_{\F_{Q^2}}(\cchi_3, \cchi_2)$ both equal~$-Q$.
		On the other hand, since the restriction of $\tt_n$ to $\F_Q^\times$ is trivial and since $4\in\F_Q^\times$, we have $\tt_n^2(4)=1$. 
		Therefore, by its definition (see \eqref{eq.defi.jacobi}) the sum $\bbeta(n)$ satisfies:
		\[\bbeta(n) 
		=  \tt_n^{2}(4) \cdot \ja_{\F_{Q^2}}(\tt_n, \tt_n) \ja_{\F_{Q^2}}(\lambda_{\F_{Q^2}}\cdot\tt_n^2, \tt_n^{-1})
		= 1\cdot\ja_{\F_{Q^2}}(\cchi_1, \cchi_1)\ja_{\F_{Q^2}}(\cchi_3, \cchi_2) = Q^2 = q^{|\Mn|}, \]
		as was to be shown.
		\ProofEnd\end{proof}
	 	 
	 Let us combine several of the results obtained so far. 
	 For any integer $d\geq 1$ coprime to $q$, the BSD conjecture (Theorem \ref{theo.BSD}) ensures that $\rank(d)= \ord_{T=q^{-1}}L(E_d/K, T)$. 
	 Besides, Corollary \ref{coro.expr.rk} yields an expression for $\ord_{T=q^{-1}}L(E_d/K,T)$ in terms of the number of $\Mn\in\O_q(Z_{2d})$ such that $\bbeta(\Mn)=q^{|\Mn|}$. 
	 If we further assume that $d\geq 1$ is such that $2d\in\Ss_q$,  the previous Theorem then implies:
	
	\begin{coro}\label{coro.rk.supersing} 
	Let $d\geq 1$ be an integer such that $2d\in\Ss_q$.	
	We have $\rank(d) =1+ |\O_q(Z_{2d})|$.
	\end{coro}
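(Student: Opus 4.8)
The plan is to simply chain together the three results that have just been established, since the statement is an immediate corollary. The only point that requires a word of justification is that the hypotheses of each ingredient are met.

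First I would invoke Theorem~\ref{theo.BSD}: since $d\geq 1$ is coprime to $q$ (this follows from $2d\in\Ss_q$, because a supersingular integer divides some $q^a+1$ and hence is coprime to $q$), the BSD conjecture holds for $E_d/K$, so that
\[ \rank(d) = \rk\big(E_d(K)\big) = \ord_{T=q^{-1}}L(E_d/K,T). \]
Next, because $d$ is coprime to $q$ and $d\geq 1$, Corollary~\ref{coro.expr.rk} applies (note $d\geq 2$ whenever $2d\in\Ss_q$ with $d\geq 1$, since $2d$ must divide some $q^a+1\geq q+1>2$), giving
\[ \ord_{T=q^{-1}}L(E_d/K,T) = 1 + \big|\big\{\Mn\in\O_q(Z_{2d}) : \bbeta(\Mn)=q^{|\Mn|}\big\}\big|. \]
Finally, Theorem~\ref{theo.ShaTate}, which is exactly the statement that $2d\in\Ss_q$ forces $\bbeta(\Mn)=q^{|\Mn|}$ for \emph{every} orbit $\Mn\in\O_q(Z_{2d})$, shows that the set appearing in the displayed cardinality is all of $\O_q(Z_{2d})$. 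Substituting, $\rank(d)=1+|\O_q(Z_{2d})|$, as claimed.

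There is no real obstacle here: the proof is purely a matter of citing Theorems~\ref{theo.BSD}, \ref{theo.ShaTate} and Corollary~\ref{coro.expr.rk} in sequence. The one thing worth being careful about is the bookkeeping on the hypotheses — namely verifying that $2d\in\Ss_q$ implies $\gcd(2d,q)=1$ (hence $\gcd(d,q)=1$) and $d\geq 2$, so that both Corollary~\ref{coro.expr.rk} and Theorem~\ref{theo.ShaTate} are legitimately applicable. Everything else is substitution.
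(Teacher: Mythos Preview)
Your proof is correct and follows exactly the same route as the paper: chain together Theorem~\ref{theo.BSD}, Corollary~\ref{coro.expr.rk}, and Theorem~\ref{theo.ShaTate}. One small slip in your bookkeeping: the claim that $2d\in\Ss_q$ forces $d\geq 2$ is false (for odd $q$ one already has $2\mid q+1$, so $d=1$ is permitted), but this is harmless since for $d=1$ the set $Z_{2}$ is empty and the formula of Corollary~\ref{coro.expr.rk} holds trivially.
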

	
	\begin{rema} 
	Let $d\geq 1$ be an integer such that $2d\in\Ss_q$. 
	Theorem \ref{theo.ShaTate} actually shows that the $L$-function of $E_d/K$ admits the following expression:	
	\[L(E_d/K, T) = (1-qT)\cdot \prod_{\Mn\in\O_q(Z_{2d})}\left(1-(qT)^{|\Mn|}\right).\]
	\end{rema}

\subsection{Structure of $\O_q(Z_{2d})$} 
\label{subsec.struct}

	In order to make Corollary \ref{coro.rk.supersing} more explicit, we describe in more detail the structure of $\O_q(Z_{2d})$.
	Let $d\geq 1$ be any integer coprime to $q$ and define the set $Z_{2d} \subset \Z/2d\Z$ as in~\S\ref{sec.q.act}.
	For any divisor $e>2$ of $2d$, consider the subset
	\[ Y_e:=\left\{n\in Z_{2d} : \gcd(n,2d)=2d/e\right\} \subset Z_{2d};\]
	since $\gcd(2d,q)= 1$, this subset 
	is stable under the action of $q$. 
	It is then clear that $Z_{2d}$ is the disjoint union of the $Y_e$, and that the orbit set $\O_q(Z_{2d})$ is the disjoint union of the sets $\O_q(Y_e)$ as $e$ runs through the divisors of~$2d$ which are $>2$.
	
	We denote by $\phi$ Euler's totient function\footnote{%
	We use the convention that $\phi(1)=0$, so that $\phi(n)=|(\Z/n\Z)^\times|$ for all $n\geq 1$.%
	}, and 
	by $o_q(n)$ the multiplicative order of $q$ modulo $n$, for any integer $n\geq 2$ coprime to $q$. 
	Since $o_q(n)$ is the order of the subgroup generated by $q$ in $(\Z/n\Z)^\times$, we have $o_q(n)\mid \phi(n)$. 
	In the notations of the previous paragraph, for any $e\mid 2d$ with $e>2$, all orbits $\Mn\in\O_q(Y_e)$ have length $|\Mn|=o_q(e)$. 
	Notice also that $Y_e$ is in bijection with $(\Z/e\Z)^\times$. 
	These two observations show that 
	\[|\O_q(Y_e)| = |(\Z/e\Z)^\times|/o_q(e) = \phi(e)/o_q(e).\]
	
	This argument proves: 
	
	\begin{lemm}\label{lemm.struct.Oq}
	Let  $d\geq 1$ be an integer coprime to $q$. Then
	$\displaystyle |\O_q(Z_{2d})| = \sum_{\substack{e\mid 2d \\ e>2}} \frac{\phi(e)}{o_q(e)}$. 
	\end{lemm}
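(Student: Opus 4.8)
The plan is to decompose $Z_{2d}$ according to the gcd of an element with $2d$, since this invariant is preserved under multiplication by $q$. Concretely, for each divisor $e$ of $2d$ with $e>2$, set $Y_e=\{n\in Z_{2d}:\gcd(n,2d)=2d/e\}$; because $\gcd(2d,q)=1$, multiplication by $q$ preserves $\gcd(n,2d)$, so $Y_e$ is $q$-stable, and $Z_{2d}$ is the disjoint union of the $Y_e$ over such $e$. The excluded residues $\{0,d\}$ (and additionally $\{d/2,3d/2\}$ when $d$ is even) are precisely the elements $n$ with $2d/\gcd(n,2d)\in\{1,2\}$, i.e.\ $e=\gcd$-cofactor $\le 2$; this is why the range of $e$ is exactly the divisors of $2d$ strictly greater than $2$. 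Consequently $\O_q(Z_{2d})$ is the disjoint union of the $\O_q(Y_e)$, and it suffices to count $|\O_q(Y_e)|$ for each such $e$.

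Next I would identify $Y_e$ with $(\Z/e\Z)^\times$ via $n\mapsto n/(2d/e)\bmod e$; this map is a bijection, and it is $q$-equivariant where $q$ acts on $(\Z/e\Z)^\times$ by multiplication. Hence $\O_q(Y_e)$ is in bijection with the orbit space of $\langle q\rangle\subset(\Z/e\Z)^\times$ acting on $(\Z/e\Z)^\times$ by translation (multiplication). That action is free, so every orbit has size equal to $|\langle q\rangle|=o_q(e)$, and therefore $|\O_q(Y_e)|=\phi(e)/o_q(e)$. Summing over the divisors $e\mid 2d$ with $e>2$ gives the claimed formula. I would also note in passing that $o_q(e)$ is well-defined because $e\mid 2d$ and $\gcd(2d,q)=1$ force $\gcd(e,q)=1$.

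There is no real obstacle here: the statement is essentially a bookkeeping exercise about the combinatorics of the multiplication action of $\langle q\rangle$ on $\Z/2d\Z$, and all the needed facts (the orbit--length description $|\Mn|=o_q(2d/\gcd(2d,n))$, the structure of $Z_{2d}$, freeness of the translation action) are already assembled in \S\ref{sec.q.act} and the preceding paragraph. The only point requiring a moment's care is checking that the divisors $e>2$ are exactly those arising as $2d/\gcd(n,2d)$ for $n\in Z_{2d}$, which amounts to the elementary observation that $\gcd(n,2d)=2d/e$ has a solution $n$ for every divisor $e$ of $2d$ and that the excluded $n$'s correspond precisely to $e\in\{1,2\}$. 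Thus the proof is short and the write-up mainly reorganizes the remarks already made above the lemma into a clean derivation.
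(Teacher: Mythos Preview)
Your argument mirrors the paper's: decompose $Z_{2d}$ into $q$-stable pieces $Y_e=\{n\in Z_{2d}:\gcd(n,2d)=2d/e\}$ for divisors $e\mid 2d$ with $e>2$, identify each $Y_e$ with $(\Z/e\Z)^\times$, and count orbits as $\phi(e)/o_q(e)$. There is, however, a genuine slip in your justification of the range of~$e$. You assert that the excluded residues (namely $\{0,d\}$, together with $\{d/2,3d/2\}$ when $d$ is even) are exactly the $n$ with $2d/\gcd(n,2d)\in\{1,2\}$. That is false for even $d$: one computes $\gcd(d/2,2d)=\gcd(3d/2,2d)=d/2$, so $d/2$ and $3d/2$ correspond to $e=4$, not to $e\le 2$. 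Hence, when $d$ is even, the set $Y_4$ is empty and is \emph{not} in bijection with $(\Z/4\Z)^\times$, so the term $\phi(4)/o_q(4)$ should not appear in the sum. A concrete check: for $d=2$ one has $Z_4=\emptyset$ and $|\O_q(Z_4)|=0$, while the stated formula gives $\phi(4)/o_q(4)\ge 1$.

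This oversight is in fact shared by the paper's own statement and argument; the identity as written holds only for odd $d$ (where $4\nmid 2d$), and for even $d$ the sum should run over $e\mid 2d$ with $e\notin\{1,2,4\}$. The discrepancy is a bounded term and does not disturb any of the downstream asymptotic applications, but your sentence ``this is why the range of $e$ is exactly the divisors of $2d$ strictly greater than $2$'' is not justified as it stands.
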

	
	Following the notations of \cite{PomeShpa}, for any integer $D$ coprime to $q$, we let 
	 \[ I_q(D) := \sum_{e\mid D} \frac{\phi(e)}{o_q(e)}.\]
	
	For all $d\geq 1$ such that $2d\in\Ss_q$, let us combine the results of Corollary \ref{coro.rk.supersing} and of the above Lemma: noticing that $e=1$ does not contribute and that $\phi(2)/o_q(2)=1$, we obtain that 
	\begin{equation}\label{eq.ineq.rk1}
	\rank(d)  = 1 +|\O_q(Z_{2d})|  =I_q(2d).
	\end{equation}
	Using the obvious fact that, if $E\geq 1$ is a divisor of a prime-to-$q$ integer $D$ one has $I_q(D)\geq I_q(E)$, we further obtain that 
	\begin{equation}\label{eq.ineq.rk2}
	\rank(d) \geq I_q(d).
	\end{equation}
	This inequality will be be useful in the next section.


 \section{Unbounded rank for $\{E_d\}_{d\geq 1}$}
 \label{sec.ubnbdrk}
 
In this section, we prove our second main result: 
\begin{theo}\label{theo.unbounded.rank} 
Let $\F_q$ be a finite field of odd characteristic, and $K:=\F_q(t)$.
For any integer $d\geq 1$, consider as above the elliptic curve $E_d$ defined by \eqref{eq.Wmodel} over $K$.
One has 
\[\limsup_{d\geq1}\left(\rk E_d(K) \right) = +\infty.\] 	
\end{theo}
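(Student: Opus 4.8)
The plan is to exhibit an explicit infinite sequence of integers $d$ along which $\rank(d)\to\infty$, by exploiting the supersingular machinery already set up. Concretely, fix any integer $a\geq 1$ coprime to $p$ and set $D=q^a+1$; since $q$ is odd, $D$ is even, so we may write $D=2d$ with $d=(q^a+1)/2$. Then $2d=D=q^a+1$ is by definition a supersingular integer, so $2d\in\Ss_q$, and $\gcd(d,q)=\gcd(2d,q)=1$. The hypotheses of Corollary~\ref{coro.rk.supersing} are therefore met, and we get the clean formula $\rank(d)=1+|\O_q(Z_{2d})|$, or equivalently $\rank(d)=I_q(2d)$ by \eqref{eq.ineq.rk1}.

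The next step is to bound $I_q(2d)=I_q(q^a+1)$ from below and show it diverges as $a\to\infty$. The crude but sufficient estimate is to keep only the top divisor $e=2d$ in the sum defining $I_q$: this gives
\[
\rank(d)=I_q(2d)\geq \frac{\phi(2d)}{o_q(2d)}.
\]
By Lemma~\ref{lemm.ulmer} applied to $D=2d=q^a+1$, the order of $q$ modulo $2d$ equals exactly $2a$. On the other hand $\phi(2d)=\phi(q^a+1)$, and one has the standard lower bound $\phi(N)\gg N/\log\log N$ (or even just $\phi(N)\geq c\,N/\log\log N$ for an absolute constant $c>0$). Hence
\[
\rank(d)\geq \frac{\phi(q^a+1)}{2a}\gg_q \frac{q^a}{a\log\log(q^a)} \gg_q \frac{q^a}{a\log a}\xrightarrow[a\to\infty]{}+\infty.
\]
Since $d=(q^a+1)/2\to\infty$ as $a\to\infty$, this produces an infinite sequence of $d$'s along which the rank is unbounded, which is exactly $\limsup_{d\geq 1}\rank(d)=+\infty$. (One may also simply invoke that $\phi$ is unbounded on any infinite set of integers together with $o_q(q^a+1)=2a$; the quantitative bound is only needed for the sharper statements in later sections.)

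The main obstacle, such as it is, is not in this argument itself — which is short once Corollary~\ref{coro.rk.supersing} is in hand — but in making sure the bookkeeping is airtight: one must check that $d=(q^a+1)/2$ is genuinely coprime to $q$ (immediate, as $q\mid q^a$ so $q\nmid q^a+1$), that $2d>2$ so that the divisor $e=2d$ actually occurs in the sum of Lemma~\ref{lemm.struct.Oq} (true for $a\geq 1$ since $q\geq 3$), and that the lower bound $\phi(N)\gg N/\log\log N$ is applied correctly with $N=q^a+1$ so that it dominates the denominator $2a\asymp \log_q N$. I would also remark that one can equally run the whole argument with odd supersingular $d$: if $d$ is odd and $d\mid q^b+1$ for some $b$, then $2d\in\Ss_q$ as well (noted in \S\ref{sec.supersing}), so the same conclusion follows along that sequence — this foreshadows the even/odd refinement announced as Theorem~\ref{theo.unbounded} in the introduction. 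No genuinely hard analytic input is required for the bare $\limsup=+\infty$ statement; the difficulty is entirely deferred to the finer results (average rank, explicit $d_n/\log d_n$ growth) treated elsewhere in the paper.
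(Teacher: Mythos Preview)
Your argument is correct and follows essentially the same route as the paper's first proof (Theorem~\ref{theo.unbounded}): take $2d=q^a+1$ supersingular, invoke Corollary~\ref{coro.rk.supersing}/\eqref{eq.ineq.rk1} to get $\rank(d)=I_q(2d)$, then use $o_q(q^a+1)=2a$ from Lemma~\ref{lemm.ulmer} to force $I_q(2d)\to\infty$. The only notable difference is in the lower bound for $I_q(q^a+1)$: you keep the single top term $\phi(q^a+1)/(2a)$ and appeal to $\phi(N)\gg N/\log\log N$, whereas the paper sums over \emph{all} divisors and uses the elementary identity $\sum_{e\mid N}\phi(e)=N$ together with $o_q(e)\leq o_q(N)$ to obtain the cleaner and slightly sharper bound $(q^a+1)/(2a)$, avoiding any analytic input on~$\phi$; the restriction ``$a$ coprime to $p$'' you impose is also unnecessary.
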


We give two proofs of this result.
In the first one, we exhibit various sequences $(d_n)_{n\geq 1}$ of integers such that $\rank(d_n)\to\infty$ as $n\to\infty$. 
The second proof shows a stronger (but less explicit) result: we deduce from the work of Pomerance and Shparlinski that the average of $\rank(d)$ is unbounded.
We conclude the section by an amusing observation about a link between prime numbers and the sequence $(\rank(d))_{d\geq 1}$. 

\begin{rema} 
In view of Remark \ref{rema.kummer}, we note the following other way of stating Theorem \ref{theo.unbounded.rank}.
Let $E=E_1$ be the elliptic curve over $\F_q(t)$ defined by  $y^2 = x(x^2 + t^2\cdot x^2 -4t^2)$.
For any integer $d\geq 1$, 
consider the finite Kummer extension $K_d:=\F_q(t^{1/d})$ of $K$. 
Theorem \ref{theo.unbounded.rank} asserts that $ \limsup_{d\geq 1}\left( \rk E(K_d) \right)  = +\infty$.
\end{rema}

\subsection{Unbounded ranks (I)}
\label{sec.unbounded}

Recall that $\Ss_q$ denotes the set of supersingular integers. 
Let us start by constructing some special sequences of integers:
\begin{lemm}\label{lemm.seq.int}  
 For all $n\geq 1$, consider $d^e_n:=q^n+1$ and $d^o_n:=\sum_{i=0}^{2n} (-q)^i$. 
\begin{enumerate}[\rm(i)]
\item\label{item.one}
\setlength\itemsep{0em}
 $d^e_n$ is an even integer such that $d^e_n\in\Ss_q$. Moreover, one has $I_q(d^e_n)\geq \log\sqrt{q}\cdot d^e_n/\log d^e_n$. 
\item\label{item.odd}
$d^o_n$ is an odd integer such that $2d^o_n\in\Ss_q$. Moreover, one has $I_q(2d^o_n)\geq \log\sqrt{q}\cdot d^o_n/\log d^o_n$. 
\end{enumerate}
\end{lemm}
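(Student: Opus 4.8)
The plan is to treat the two assertions in parallel, since the arithmetic is almost identical. First I would dispose of the supersingularity and parity claims. For \eqref{item.one}: $d^e_n = q^n+1$ is manifestly supersingular by definition of $\Ss_q$, and since $q$ is odd, $q^n+1$ is even. For \eqref{item.odd}: I would observe that $d^o_n = \sum_{i=0}^{2n}(-q)^i = \frac{(-q)^{2n+1}-1}{-q-1} = \frac{q^{2n+1}+1}{q+1}$, so that $(q+1)\cdot d^o_n = q^{2n+1}+1$. Hence $d^o_n \mid q^{2n+1}+1$, which shows $d^o_n\in\Ss_q$, and then $2d^o_n\in\Ss_q$ as well (using the remark in \S\ref{sec.supersing} that $2D\in\Ss_q$ whenever $D$ is an odd supersingular integer, valid since $q$ is odd --- note $d^o_n$ is odd because it is an alternating sum of $2n+1$ odd terms, or equivalently because $q^{2n+1}+1 \equiv 2 \pmod{q+1}$ times something... more simply, $d^o_n \equiv 2n+1 \equiv 1 \pmod 2$ since each $(-q)^i$ is odd).

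The heart of the lemma is the lower bound on $I_q$. The key point is that for $D=q^m+1$ (or more generally a divisor of $q^m+1$), the order $o_q(e)$ of $q$ modulo any divisor $e$ of $D$ is small --- this is where Lemma \ref{lemm.ulmer} enters. Concretely, if $e\mid q^m+1$ then $q^m\equiv -1\pmod e$, so $q^{2m}\equiv 1\pmod e$, giving $o_q(e)\mid 2m$ and in particular $o_q(e)\leq 2m$. Since $I_q(D) = \sum_{e\mid D}\phi(e)/o_q(e)$, I would bound below by
\[
I_q(D) \;\geq\; \frac{1}{2m}\sum_{e\mid D}\phi(e) \;=\; \frac{D}{2m},
\]
using the classical identity $\sum_{e\mid D}\phi(e)=D$ (valid with the convention $\phi(1)=0$? — here I must be careful: the paper sets $\phi(1)=0$, so $\sum_{e\mid D}\phi(e) = D-1$; either way the bound $I_q(D)\geq (D-1)/(2m)$ suffices after absorbing constants). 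Now for \eqref{item.one}, $D=d^e_n=q^n+1$, so $m=n$ and $I_q(d^e_n)\geq (q^n+1-1)/(2n) = q^n/(2n)$. To convert $2n$ into $\log d^e_n$: since $d^e_n = q^n+1 \leq q^{n}\cdot 2 \leq q^{n+1}$, we have $\log d^e_n \leq (n+1)\log q$, hence $2n \leq 2\log d^e_n/\log q$ (for $n$ large, or after adjusting constants), giving $I_q(d^e_n) \geq \frac{\log q}{2}\cdot\frac{q^n}{\log d^e_n}$. Comparing with the claimed bound $\log\sqrt q\cdot d^e_n/\log d^e_n = \frac{\log q}{2}\cdot\frac{q^n+1}{\log d^e_n}$, I would just need to be slightly more careful with the constants — the cleanest route is to note $o_q(e)\leq 2m$ is not tight; in fact $o_q(q^m+1) = 2m$ exactly by the last sentence of Lemma \ref{lemm.ulmer}, but for \emph{divisors} $e$ one only gets $o_q(e)\mid 2m$. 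The honest inequality to push through is $I_q(q^m+1)\geq \sum_{e\mid q^m+1}\phi(e)/(2m)$, and then match $2m$ against $\log(q^m+1)/\log\sqrt q = 2\log(q^m+1)/\log q$; since $q^m+1 > q^m$ gives $\log(q^m+1) > m\log q$, we get $2m < 2\log(q^m+1)/\log q = \log(q^m+1)/\log\sqrt q$, exactly the needed comparison.

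For \eqref{item.odd} the argument is the same with $D = 2d^o_n$, which divides $2(q^{2n+1}+1)/(q+1)$; more usefully, $2d^o_n \mid q^{2n+1}+1$ is false (wrong parity of the quotient), but $d^o_n\mid q^{2n+1}+1$ and $2\mid q^{2n+1}+1$, and since $\gcd(2,d^o_n)=1$ we get $2d^o_n \mid q^{2n+1}+1$ after all. So take $m=2n+1$: every divisor $e$ of $2d^o_n$ has $o_q(e)\mid 2(2n+1)$, whence $I_q(2d^o_n)\geq \sum_{e\mid 2d^o_n}\phi(e)/(2(2n+1)) \geq (2d^o_n-1)/(2(2n+1))$, and then I would compare $2(2n+1)$ with $\log d^o_n/\log\sqrt q$. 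Here the subtlety is that $d^o_n \approx q^{2n}$ (since $d^o_n = (q^{2n+1}+1)/(q+1) > q^{2n+1}/(2q) = q^{2n}/2$), so $\log d^o_n > 2n\log q - \log 2$, and one checks $2(2n+1) \leq 2\log d^o_n/\log q = \log d^o_n/\log\sqrt q$ holds for all $n$ large enough (and the finitely many small $n$ are irrelevant for the application, or can be absorbed). The main obstacle is therefore purely bookkeeping: making the constant $\log\sqrt q$ come out exactly right rather than up to a multiplicative slack. I expect this to be handled by being slightly generous in the bound $d^o_n > q^{2n}/2$ versus the clean target, and if the constant genuinely requires care, by noting that one has freedom to replace $d^o_n/\log d^o_n$ by $\gg_q d^o_n/\log d^o_n$ throughout, which is all that Theorem \ref{theo.unbounded} ultimately needs.
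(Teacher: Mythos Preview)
Your proposal is correct and follows essentially the same route as the paper: establish parity and supersingularity directly, bound $o_q(e)\leq 2m$ for every divisor $e$ of $q^m+1$ (resp.\ $2d^o_n\mid q^{2n+1}+1$), use $\sum_{e\mid D}\phi(e)=D$ to get $I_q(D)\geq D/(2m)$, and then compare $2m$ with $\log D/\log\sqrt{q}$. Your worries about the constant $\log\sqrt{q}$ are unnecessary --- the inequalities $q^n+1>q^n$ and $d^o_n\geq q^{(2n+1)/2}$ (the latter from $d^o_n\geq q^{2n}-q^{2n-1}$) hold for all $n\geq 1$ and $q\geq 3$, so no asymptotic slack or $\gg_q$ is needed.
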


\begin{proof} 
It is obvious that $d^e_n=q^n+1$ is even and supersingular; 
by Lemma \ref{lemm.ulmer}, the multiplicative order of $q$ modulo~$d^e_n$ equals $2n$. 
Moreover, for any divisor $e\mid d^e_n$, we have $o_q(e)\mid o_q(d^e_n) = 2n$ so that, in particular, $o_q(e) \leq o_q(d^e_n)$. 
We obtain the chain of inequalities:
\[I_q(d^e_n) = \sum_{e\mid d^e_n} \frac{\phi(e)}{o_q(e)} 
\geq \frac{1}{o_q(d^e_n)}\cdot \sum_{{e\mid d^e_n}}  \phi(e)  =\frac{d^e_n}{o_q(d^e_n)} = \frac{d^e_n}{2n}.\]
By construction, one has $n = \log(d^e_n-1)/\log q \leq \log d^e_n /\log q$ and we immediately deduce that $I_q(d^e_n)\geq \log\sqrt{q}\cdot d^e_n\log d^e_n$, hence \eqref{item.one} is proved.

Being the sum of an odd number of odd integers, $d^o_n$ is clearly odd, and we note that $q^{2n+1}+1=\frac{q+1}{2}\cdot 2d^o_n$. 
This identity shows that $2d^o_n$ is supersingular  and that $q^{2n+1}\equiv -1 \bmod{2d^o_n}$, so that $o_q(2d^o_n) \leq 2(2n+1)$.
As before, for any divisor $e\mid 2d^o_n$, we have $o_q(e) \leq o_q(2d^o_n)$  and, by an argument similar to the above, we obtain  that $I_q(2d^o_n)\geq d^o_n/(2n+1)$. 
From the identity $d^o_n = {(q^{2n+1}+1)}/{(q+1)}$, straightforward estimates imply that $d^o_n/(2n+1)\geq \log \sqrt{q} \cdot d^o_n/\log d^o_n$, which proves \eqref{item.odd}.
\ProofEnd\end{proof}
 
Putting together our results so far, we obtain: 
 
 \begin{theo}\label{theo.unbounded} 
 Let $\F_q$ be a finite field of odd characteristic.
 For any integer $d\geq 1$, consider the elliptic curve $E_d$ over $K=\F_q(t)$ defined by \eqref{eq.Wmodel}. 
 There is an infinite sequence of integers $(d_n)_{n\geq 1}$ such that 
\[\rank(d_n)=\rk\big(E_{d_n}(K)\big) \gg_{q} \frac{d_n}{\log d_n}.\]
More precisely,
\begin{enumerate}[\rm(a)]
 \setlength\itemsep{0em}
\item\label{item.rank.unbounded.a}
 there is an infinite sequence of odd integers $(d^o_n)_{n\geq 1}$ such that $\rank(d^o_n) \geq\displaystyle \log\sqrt{q}\cdot{d^o_n}/{\log d^o_n}$.  
\item\label{item.rank.unbounded.b}
 there is an infinite sequence of even integers $(d^e_n)_{n\geq 1}$ such that $\rank(d^e_n) \geq\displaystyle \log\sqrt{q}\cdot{d^e_n}/{\log d^e_n}$ 
\end{enumerate}
 \end{theo}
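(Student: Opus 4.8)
The plan is to assemble Theorem \ref{theo.unbounded} directly from the ingredients already in place, with essentially no new work. The key fact is the chain of identities and inequalities established in \S\ref{subsec.struct}: for any $d\geq 1$ coprime to $q$ with $2d$ supersingular, the BSD conjecture (Theorem \ref{theo.BSD}) gives $\rank(d)=\ord_{T=q^{-1}}L(E_d/K,T)$, Corollary \ref{coro.rk.supersing} gives $\rank(d)=1+|\O_q(Z_{2d})|$, and Lemma \ref{lemm.struct.Oq} together with \eqref{eq.ineq.rk1}--\eqref{eq.ineq.rk2} yields $\rank(d)=I_q(2d)\geq I_q(d)$. So the whole problem reduces to producing infinite sequences of odd integers and of even integers $d$ with $2d$ (resp.\ $d$) supersingular and with $I_q$ of the relevant value large compared to $d/\log d$ --- and that is exactly the content of Lemma \ref{lemm.seq.int}.

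Concretely, for part \eqref{item.rank.unbounded.a} I would take $d^o_n:=\sum_{i=0}^{2n}(-q)^i$. By Lemma \ref{lemm.seq.int}\eqref{item.odd}, $d^o_n$ is odd, $2d^o_n\in\Ss_q$, and $I_q(2d^o_n)\geq \log\sqrt q\cdot d^o_n/\log d^o_n$. Since $d^o_n$ is odd it is automatically coprime to $q$ (as $p\mid q$ and $d^o_n\equiv 1\bmod q$, or simply since $2d^o_n\mid q^{2n+1}+1$), so \eqref{eq.ineq.rk1} applies and gives $\rank(d^o_n)=I_q(2d^o_n)\geq \log\sqrt q\cdot d^o_n/\log d^o_n$. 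As $n\to\infty$ the integers $d^o_n$ are strictly increasing, so this is an infinite sequence. For part \eqref{item.rank.unbounded.b} I would take $d^e_n:=q^n+1$: by Lemma \ref{lemm.seq.int}\eqref{item.one} it is even, supersingular, and $I_q(d^e_n)\geq \log\sqrt q\cdot d^e_n/\log d^e_n$. Again $d^e_n$ is coprime to $q$, so \eqref{eq.ineq.rk1} applies with the role of ``$2d$'' played by $d^e_n$ itself (which is an even supersingular integer); thus $\rank(d^e_n/2)=I_q(d^e_n)$, but it is cleaner to phrase it via $2d=d^e_n$, i.e.\ set $d=d^e_n/2$ and note $2d=q^n+1\in\Ss_q$ --- wait, this needs care, so let me instead simply invoke \eqref{eq.ineq.rk2}: for $d=d^e_n$, which is itself even, we have $2d\in\Ss_q$ is not guaranteed, so the honest route is the one the paper intends, namely that $d^e_n$ plays the role of ``$2d$'' and hence $\rank(d^e_n/2)\geq I_q(d^e_n/2)\geq\dots$; simplest is to observe directly that $\rank(d^e_n)\geq I_q(d^e_n)$ via \eqref{eq.ineq.rk2} once we know $2d^e_n$ or a multiple is supersingular, which follows since $d^e_n\mid q^n+1$.

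The first displayed assertion, $\rank(d_n)\gg_q d_n/\log d_n$, then follows by interleaving the two sequences (or just taking either one), since $\log\sqrt q$ is a positive constant depending only on $q$. The main --- indeed only --- obstacle here is bookkeeping: making sure the supersingularity hypothesis is invoked for the correct integer ($2d$ versus $d$) in each of the two cases, and that the coprimality-to-$q$ condition needed for Corollary \ref{coro.rk.supersing} and \eqref{eq.ineq.rk1} is checked. Both are immediate: $d^o_n$ is odd and $\equiv 1\bmod q$ hence prime to $q$, and $d^e_n=q^n+1\equiv 1\bmod q$ likewise; supersingularity of $2d^o_n$ and of $d^e_n$ is exactly Lemma \ref{lemm.seq.int}. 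No genuinely hard step remains; the theorem is a repackaging of the structural results of \S\ref{sec.supersing} and \S\ref{subsec.struct} with the explicit sequences of Lemma \ref{lemm.seq.int}.
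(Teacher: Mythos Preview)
Your treatment of part \eqref{item.rank.unbounded.a} is correct and is exactly what the paper does. For part \eqref{item.rank.unbounded.b}, however, there is a genuine gap. You correctly reach $\rank(d^e_n/2)=I_q(d^e_n)$ by applying \eqref{eq.ineq.rk1} with $2d=d^e_n=q^n+1\in\Ss_q$, and you also correctly notice that passing to $\rank(d^e_n)$ via \eqref{eq.ineq.rk2} would require $2d^e_n\in\Ss_q$. But your final claim --- that supersingularity of $2d^e_n$ ``follows since $d^e_n\mid q^n+1$'' --- is wrong: that divisibility only gives $d^e_n\in\Ss_q$, not $2d^e_n\in\Ss_q$. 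In fact $2d^e_n=2(q^n+1)$ is divisible by~$4$, while if $q\equiv 1\pmod 4$ then $q^a+1\equiv 2\pmod 4$ for every $a\geq 1$, so $2d^e_n$ is \emph{never} supersingular in that case. Thus \eqref{eq.ineq.rk2} is simply unavailable for $d=d^e_n$.

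What is missing is the one-line monotonicity Lemma~\ref{lemm.rk.mult}: for any $m\geq 1$ one has $\rank(md)\geq\rank(d)$, because $E_d(K)\simeq E_{md}(\F_q(t^m))\subseteq E_{md}(K)$. The paper invokes this with $m=2$ and $d=d^e_n/2$ to get $\rank(d^e_n)\geq\rank(d^e_n/2)=I_q(d^e_n)\geq \log\sqrt{q}\cdot d^e_n/\log d^e_n$. Once you insert that step, your argument for~\eqref{item.rank.unbounded.b} goes through and coincides with the paper's.
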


\begin{proof} 
For any integer $d\geq 1$ such that $2d\in\Ss_q$, we know from \eqref{eq.ineq.rk1} that $\rank(d) = \rk\big(E_d(K)\big)=I_q(2d)$.

By Lemma \ref{lemm.seq.int}\eqref{item.odd}, there exists an infinite sequence of odd integers $d^o_n:=\sum_{i=0}^{2n}(-q)^i$ such that $2d^o_n$ is supersingular and $I_q(2d^o_n) \geq \log\sqrt{q}\cdot d^o_n/\log d^o_n$ for all $n\geq 1$. 
This proves  assertion \eqref{item.rank.unbounded.a} of the Theorem.

Next, let $d^e_n:=q^n+1$ for any $n\geq 1$; note that $d^e_n$ is even and supersingular. 
By the small Lemma~\ref{lemm.rk.mult} below, one has $\rank(d^e_n) \geq \rank(d^e_n/2)$. 
Together with \eqref{eq.ineq.rk1} applied for $d=d^e_n/2$,  Lemma \ref{lemm.seq.int}\eqref{item.one} then implies that
\[\rank(d^e_n) \geq \rank(d^e_n/2) = I_q(d^e_n) 
 \geq  \log\sqrt{q}\cdot {d^e_n}/{\log d^e_n}.\]
This proves the Theorem, and concludes the first proof for Theorem \ref{theo.unbounded.rank}.  
\ProofEnd\end{proof}
 
\begin{lemm}\label{lemm.rk.mult}
Let $d\geq 1$ be an integer.
Then $\rank(md)\geq\rank(d)$ for all integers $m\geq 1$.
\end{lemm}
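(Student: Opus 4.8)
The plan is to realise $\rank(d)$ as the rank of a Mordell--Weil group which injects into $E_{md}(K)$ by base change. The structural point is that $E_{md}$ is obtained from $E_d$ by replacing $t$ with $t^m$: introduce a second copy $F:=\F_q(u)$ of the rational function field together with the elliptic curve $\widetilde{E}_d/F$ given by $y^2=x(x^2+u^{2d}x-4u^{2d})$. Then, under the field embedding $\iota_m\colon F\hookrightarrow K$ determined by $u\mapsto t^m$, the base-changed curve $\widetilde{E}_d\times_{F,\iota_m}K$ is exactly the curve \eqref{eq.Wmodel} defining $E_{md}/K$, since $\iota_m(u^{2d})=t^{2md}=t^{2(md)}$.

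First I would record that for any elliptic curve $\mathcal{C}/F$, viewing an $F$-rational point of $\mathcal{C}$ as a $K$-rational point of $\mathcal{C}\times_{F,\iota_m}K$ defines an injective group homomorphism $\mathcal{C}(F)\hookrightarrow (\mathcal{C}\times_{F,\iota_m}K)(K)$: it respects the group law (given by the same Weierstrass formulae on both sides), and it is injective on points simply because a tuple of elements of $F$ which is not the identity point remains such a tuple over the larger field $K$. In particular, no separability hypothesis on $K/\iota_m(F)$ is needed, which matters here since $m$ is allowed to be divisible by $p$. Applying this with $\mathcal{C}=\widetilde{E}_d$ yields an injection $\widetilde{E}_d(F)\hookrightarrow E_{md}(K)$, hence $\rk\widetilde{E}_d(F)\leq \rk E_{md}(K)=\rank(md)$.

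It remains to identify $\rk\widetilde{E}_d(F)$ with $\rank(d)$, which is immediate: the isomorphism of fields $F\cong K$ sending $u$ to $t$ carries $\widetilde{E}_d$ to $E_d$, so $\rk\widetilde{E}_d(F)=\rk E_d(K)=\rank(d)$. Combining the two steps gives $\rank(d)\leq\rank(md)$, as claimed. Equivalently, in the language of Remark~\ref{rema.kummer} one has $\rank(d)=\rk E_1\big(\F_q(t^{1/d})\big)$ and $\rank(md)=\rk E_1\big(\F_q(t^{1/(md)})\big)$, and the inequality follows at once from the inclusion of fields $\F_q(t^{1/d})\subseteq\F_q(t^{1/(md)})$. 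There is essentially no obstacle in this argument; the only point worth flagging is the possible inseparability of $K$ over $\iota_m(F)$, which, as noted above, plays no role in the injectivity that is used.
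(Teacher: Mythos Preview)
Your proof is correct and is essentially the same as the paper's: the paper sets $K_m:=\F_q(t^m)\subset K$, observes that $E_{md}(K_m)\simeq E_d(K)$ and that $E_{md}(K_m)\subseteq E_{md}(K)$, which is precisely your base-change argument with $\iota_m(F)=K_m$. Your remark on inseparability and the alternative formulation via Remark~\ref{rema.kummer} are accurate but not needed beyond what the paper uses.
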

\begin{proof} 
For a given $m\geq 1$, we let $K_m:=\F_q(t^m)$ be the subfield of $\F_q(t)=K$ consisting of rational functions in the variable $t^m$.
Since $K_m\subset K$, it is clear that $E_{md}(K_m)\subseteq E_{md}(K)$.
On the other hand, it is obvious that $E_{md}(K_m)\simeq E_d(K)$ as abelian groups.
Hence, $E_{d}(K)$ is isomorphic to a subgroup of $E_{md}(K)$.
\ProofEnd\end{proof}

\begin{rema}\label{rema.unbdd}
\begin{enumerate}[(a)]
\item 
Given that the BSD conjecture holds for the curves $E_d$ (see Theorem \ref{theo.BSD}), the lower bound in Theorem~\ref{theo.unbounded}\eqref{item.rank.unbounded.b} means that Ulmer's lower bound on the rank (Theorem \ref{theo.Ulmer}) still holds for the sequence $\{E_d\}_{d\geq 1}$, even though its `parity hypothesis' fails to be satisfied  (see \S\ref{sec.Ulmer}).

\item\label{item.brumer} 
Let us further compare the lower bounds in Theorem \ref{theo.unbounded}  to Brumer's upper bound on the rank. 
Proposition~6.9 in~\cite{Brumer} states that 
 \begin{equation}\label{eq.Brumer}
 	\rk\big(E(K)\big)\leq \log\sqrt{q}\cdot \frac{\deg\cond_{E/K}}{\log\deg\cond_{E/K}}\cdot\big(1+o(1)\big)\quad (\text{as }\deg\cond_{E/K}\to\infty),
 \end{equation}
 for any nonisotrivial elliptic curve $E/K$.
Applying this bound to $E=E_d$ and plugging in our computation of $\deg \cond_{E_{d}/K}$ yields that 
$\rank(d') \leq \log \sqrt{q}\cdot  \frac{2d'}{\log d'}\cdot (1+o(1))$, as $d'\to\infty$ runs through integers coprime to $q$.
Combining this to the lower bounds on $\rank(d^x_n)$   provided by Theorem~\ref{theo.unbounded}\eqref{item.rank.unbounded.a}-\eqref{item.rank.unbounded.b} for $x\in\{o,e\}$ and all $n\geq 1$, we deduce that 
\[\log\sqrt{q}\cdot \frac{d^x_n}{\log d^x_n} 
\leq \rank(d^x_n) \leq  
2\log \sqrt{q}\cdot  \frac{d^x_n}{\log d^x_n}\cdot (1+o(1))\qquad(\text{as }n\to\infty).\]
In other words, the subsequences  $\{E_{d^x_n}\}_{n\geq 1}$ for $x\in\{o,e\}$ provide examples where Brumer's bound~\eqref{eq.Brumer} is optimal, up to a small absolute constant. 
 \end{enumerate}
 \end{rema}

\subsection{Unbounded ranks (II)}  
\label{sec.unbounded.redux}
In \cite{PomeShpa}, Pomerance and Shparlinski study the average behaviour of the Mordell--Weil ranks of the elliptic curves of \cite{Ulmer_LargeRk}.
In the introduction to their paper, they note that it would be interesting to extend their result to other families of elliptic curves.
Motivated by this remark, we investigated the average rank of the sequence $\{E_d\}_{d\geq 1}$ under consideration here.
Here is the outcome of this investigation:

\begin{theo}\label{theo.unbounded.redux}
There exists an absolute constant $\alpha>1/2$ such that, for all big enough $x$ (depending on~$p$) one has
\begin{equation*}
\frac{1}{x}\sum_{1\leq d\leq x} \rank(d) \geq x^\alpha.
\end{equation*}
In particular, the average rank of $\{E_d(K)\}_{d\geq 1}$ is unbounded.
\end{theo}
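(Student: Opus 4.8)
The plan is to bound the left-hand side from below by retaining only those arguments $d$ for which Section~\ref{subsec.struct} gives us exact control of $\rank(d)$ — namely the $d$ with $2d$ supersingular — and then to feed the resulting quantity into the arithmetic estimate of Pomerance and Shparlinski from \cite{PomeShpa}.

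Concretely, I would first invoke \eqref{eq.ineq.rk1}: for every integer $d\geq 1$ with $2d\in\Ss_q$ one has $\rank(d)=I_q(2d)$ (and any such $d$ is automatically coprime to $q$, since supersingular integers are coprime to $p$). As $\rank(d)\geq 0$ for every $d$, discarding the remaining terms gives
\[\sum_{1\leq d\leq x}\rank(d)\ \geq\ \sum_{\substack{1\leq d\leq x\\ 2d\in\Ss_q}} I_q(2d).\]
The sum on the right is precisely of the shape studied in \cite{PomeShpa} in connection with the average rank of the family of \cite{Ulmer_LargeRk}: using the identity $I_q(2d)=\sum_{e\mid 2d}\phi(e)/o_q(e)$ (see Lemma~\ref{lemm.struct.Oq}), their main estimate provides an absolute constant $\alpha>1/2$ and a threshold $x_0$, depending only on $q$, such that
\[\sum_{\substack{1\leq d\leq x\\ 2d\in\Ss_q}} I_q(2d)\ \geq\ x^{1+\alpha}\qquad\text{for all }x\geq x_0.\]
Dividing through by $x$ then yields $\tfrac{1}{x}\sum_{1\leq d\leq x}\rank(d)\geq x^{\alpha}$ for $x\geq x_0$, and in particular the average of $\rank(d)$ is unbounded.

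The only genuinely delicate point I anticipate is the bookkeeping behind the second display: \cite{PomeShpa} phrase their lower bound for the rank function attached to Ulmer's curves rather than for ours, so one has to verify that what they actually establish is an estimate for $\sum_{2d\in\Ss_q} I_q(2d)$ — equivalently, after swapping the order of summation, for $\sum_{e\in\Ss_q,\ e\leq 2x}\frac{\phi(e)}{o_q(e)}\cdot\big|\{d\leq x:\ e\mid 2d\}\big|$ — and that the exponent $\alpha>1/2$ it produces may be taken independent of $q$. Both checks amount to matching the description of $\O_q(Z_{2d})$ from Section~\ref{subsec.struct} (via Corollary~\ref{coro.rk.supersing}) with the analogous combinatorial input in \cite{Ulmer_LargeRk,PomeShpa}; this is routine but must be carried out with care. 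The remaining steps — the reduction to supersingular $d$ via \eqref{eq.ineq.rk1} and the final division by $x$ — are immediate.
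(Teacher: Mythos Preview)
Your proposal is correct and follows essentially the same route as the paper: restrict to $d$ with $2d\in\Ss_q$, use the identity~\eqref{eq.ineq.rk1} (the paper actually uses the weaker~\eqref{eq.ineq.rk2}, but either works), and then import the analytic estimate from \cite{PomeShpa}. The one point where the paper is more explicit is precisely your ``delicate bookkeeping'': rather than invoking a black-box lower bound on $\sum I_q(2d)$, the paper extracts from \cite{PomeShpa} the existence of a set $S_x\subset[1,x]$ of odd supersingular integers with $|S_x|\geq x^{\alpha+o(1)}$, $d\geq x^{1+o(1)}$, and $o_q(d)\leq x^{o(1)}$ for all $d\in S_x$, and then bounds each $I_q(d)$ from below by the single term $\phi(d)/o_q(d)\geq x^{1+o(1)}$ --- no swapping of summations is needed.
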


It turns out that this result follows almost directly from the constructions in \cite{PomeShpa}   (specifically, the proof of Theorem 1 there). 
For the convenience of the readers, we sketch a proof nonetheless; more details can be found on pp. 33 -- 35 of \emph{loc.\,cit.}. 

\begin{proof}
Let $x$ be a large real number (depending on $p$). 
In their paper (see p. 35 \emph{loc.\,cit.}), Pomerance and Shparlinski show that there exist  an absolute constant $\alpha>1/2$ and a set $S_x$ of integers, with the following properties:
\begin{multicols}{2}
\begin{enumerate}[(i)]
\setlength\itemsep{0em}
\item For all $d\in S_x$, $x^{1+o(1)}\leq d \leq x$, 
\item  $|S_x|\geq x^{\alpha + o(1)}$, 
\item For all $d\in S_x$, $o_q(d) \leq x^{o(1)}$, 
\item For all $d\in S_x$, $d$ is odd and $d$ is supersingular.
\end{enumerate}
\end{multicols}
\noindent
The construction of $S_x$ is quite subtle (\cf{} pp. 32--34 \emph{loc.\,cit.}) and relies on techniques from analytic number theory. 
We do not go into  details and refer the  reader to the paper \cite{PomeShpa} for details. 

Since it is known that $\phi(d)\gg d^{1+o(1)}$ for all $d\geq 1$, we deduce from properties (iii) and (i) of $S_x$ that 
\[\forall d\in S_x, \qquad 
I_q(d) \geq\frac{\phi(d)}{o_q(d)} 
\geq \frac{d^{1+o(1)}}{x^{o(1)}} \geq x^{1+o(1)}.\]
Hence, by (ii),
\[\frac{1}{x}\sum_{d\in S_x} I_q(d)  \geq \frac{|S_x|}{x} \cdot x^{1+o(1)} \geq x^{\alpha +o(1)}.\]
By (iv), any $d \in S_x$ is odd and supersingular, so that $2d$ is supersingular for all $d\in S_x$. 
Thus, $\rank(d)\geq I_q(d)$ by our inequality \eqref{eq.ineq.rk2}.
From here, since $\rank(d)\geq 1$, we deduce that
\begin{align*}
\frac{1}{x}\sum_{1\leq d\leq x} \rank(d) 
&\geq \frac{1}{x}\sum_{d\in S_x} \rank(d) 
\geq \frac{1}{x}\sum_{d\in S_x} I_q(d)  
\geq x^{\alpha +o(1)},
\end{align*}
which concludes the proof of the Theorem. 
\ProofEnd \end{proof}

\begin{rema}
\begin{enumerate}[(a)]
\item 
	Brumer has shown that the average rank of elliptic curves over $K$ is bounded. 
	More precisely, consider the sequence $\Escr\!\ell\!\ell$ of \emph{all nonisotrivial} elliptic curves over $K$ ordered by their naive height $h$: Theorem 7.11 in  \cite{Brumer} states that 
	\[	\frac{1}{\left|\big\{E\in \Escr : h(E)\leq B\big\}\right|} \sum_{\substack{E\in\Escr \\ h(E)\leq B}} \rk\big(E(K)\big) \leq 2.3 +o(1) 
	\qquad (\text{as } B\to\infty).\]
	Comparing this result to our Theorem \ref{theo.unbounded.redux}, one sees that the sequence $\{E_{d}\}_{d\geq 1}$ must be a very 
	`thin' subsequence of $\Escr\!\ell\!\ell$. 
	It is also a quite special subsequence, in the sense that the average behaviour of its rank is atypical.
\item 
	From Theorem \ref{theo.unbounded.redux}, one easily deduces the following statement: for $x$ large enough, there are at least $x^\alpha$ integers $d\in[1, x]$ such that $\rank(d)\gg_q \sqrt{d}/\log d$. 
	In particular, `large' ranks are relatively `common' in the sequence $\{E_d\}_{d\geq 1}$.
	 But actually, adapting Theorem 2 of \cite{PomeShpa} to the case at hand would show that a much stronger statement holds: this would prove that, for all $\epsilon>0$, one has 
	 \[ \frac{1}{x}\cdot \left|\big\{ d\in [1,x] : \rank(d)\geq (\log d)^{(1/3-\epsilon)\log\log\log d} \, \big\}\right| = 1-o_{p, \epsilon}(1) \qquad (\text{as }x\to\infty).\]
	More vaguely, this tells us that the rank $\rank(d)$ is `reasonably large' for almost all integers $d$.
	Again, the proof of \cite[Theorem 2]{PomeShpa} is quite subtle and we refer the reader to \cite[pp. 36--39]{PomeShpa} for  details.  
\end{enumerate}
\end{rema}

\subsection{An amusing fact}
\label{sec.amusing}

We would like to conclude this paper by the following observation.
In this section, we restrict to the case where $q=p$ is a prime number $p\geq 3$ and consider the elliptic curves $E_d$ defined by \eqref{eq.Wmodel} over $K=\F_p(t)$.

When $d=1$ or $2$, we know from Theorem \ref{theo.Lfunc} that $L(E_d/K, T)=1-qT$.
In particular, the BSD conjecture (Theorem \ref{theo.BSD}) here yields that $\rank(1)=\rank(2)=1$. 
Since, for all $m\geq 1$, the $p^m$th power Frobenius provides an isogeny $E_1\to E_{p^m}$ (resp. $E_2\to E_{2p^m}$) and since isogenous curves have the same Mordell--Weil rank, the groups  $E_{p^m}(K)$ and $E_{2p^m}(K)$ also have rank one.
Hence, we deduce that there exist infinitely many integers $d\geq 1$ such that $E_d$ has Mordell--Weil rank $1$  (namely $1, 2, p, 2p, p^2, 2p^2, p^3, \dots$). 
Combined with Corollary \ref{coro.positive.rk} -- which implies that $\rank(d)\geq 1$ for all $d\geq 1$, we have just shown that $\liminf_{d\geq 1}\rank(d)=1$.

One can generalise the above argument, as follows:

\begin{theo}\label{theo.amusing}
Assume that there exists a prime number $\ell\neq 2,p$ such that $p$ generates $(\Z/\ell^2\Z)^\times$.
For any odd integer $R\geq 1$, there are infinitely many integers $d\geq 1$ such that 
\[\rk\big(E_d(\F_p(t))\big)=R.\]
\end{theo}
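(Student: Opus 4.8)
The strategy is to write the odd integer $R$ as $R=2k+1$ with $k=(R-1)/2\geq 0$, to establish the single equality $\rank(\ell^k)=R$, and then to produce infinitely many $d$ with the prescribed rank by the Frobenius trick already used in the preamble to this section: for every $m\geq 0$ the curve $E_{\ell^k p^m}$ is $K$-isogenous to $E_{\ell^k}$ (Remark \ref{rema.alld}), so $\rank(\ell^k p^m)=\rank(\ell^k)=R$, and the integers $\ell^k p^m$ with $m\geq 0$ are pairwise distinct. We may assume $R\geq 3$, \ie{} $k\geq 1$, since $\rank(1)=1$ has already been proved above with no hypothesis.

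Since $\ell\neq p$, the integer $d_0:=\ell^k$ is coprime to $q=p$, and I would first check that $2d_0\in\Ss_q$ in order to apply \eqref{eq.ineq.rk1}. The hypothesis says $p$ generates $(\Z/\ell^2\Z)^\times$, hence --- by the classical lifting of primitive roots to higher prime powers --- $p$ generates $(\Z/\ell^j\Z)^\times$, equivalently $o_q(\ell^j)=\phi(\ell^j)$, for every $j\geq 1$. The group $(\Z/\ell^k\Z)^\times$ is cyclic of even order $\phi(\ell^k)=\ell^{k-1}(\ell-1)$, so $p^{\phi(\ell^k)/2}$ is its unique element of order $2$, namely $p^{\phi(\ell^k)/2}\equiv -1\bmod{\ell^k}$; and $p$ being odd, $p^{\phi(\ell^k)/2}\equiv 1\equiv -1\bmod 2$. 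Hence $2\ell^k\mid p^{\phi(\ell^k)/2}+1$, so $2\ell^k\in\Ss_q$. Now \eqref{eq.ineq.rk1} gives $\rank(d_0)=I_q(2\ell^k)=\sum_{e\mid 2\ell^k}\phi(e)/o_q(e)$, where $e$ runs through the divisors $1$, $2$, $\ell^j$ and $2\ell^j$ ($1\leq j\leq k$) of $2\ell^k$. The term $e=1$ contributes $0$ and $e=2$ contributes $\phi(2)/o_q(2)=1$. For $1\leq j\leq k$ the equality $o_q(\ell^j)=\phi(\ell^j)$ gives $\phi(\ell^j)/o_q(\ell^j)=1$, while $o_q(2\ell^j)=\operatorname{lcm}(o_q(2),o_q(\ell^j))=\phi(\ell^j)=\phi(2\ell^j)$ gives $\phi(2\ell^j)/o_q(2\ell^j)=1$ as well. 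Adding the $1+2k$ nonzero contributions yields $\rank(\ell^k)=2k+1=R$, as wanted.

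Once Sections \ref{sec.supersing}--\ref{subsec.struct} are in hand, this is essentially bookkeeping, and I do not expect a serious obstacle. The only place where the hypothesis is genuinely used, and the point deserving care, is the elementary number theory: one must check that ``$p$ generates $(\Z/\ell^2\Z)^\times$'' propagates to ``$p$ generates $(\Z/\ell^j\Z)^\times$ for all $j\geq 1$'', and then extract from this both the supersingularity of $2\ell^k$ and the fact that \emph{every} divisor $e>2$ of $2\ell^k$ has $o_q(e)=\phi(e)$, hence contributes exactly $1$ to $I_q(2\ell^k)$. It is exactly this maximality of all the relevant multiplicative orders that pins $\rank(\ell^k)$ to the precise value $R$ instead of merely bounding it, and that forces us to work with $2\ell^k$ (where every divisor $>2$ is of the form $\ell^j$ or $2\ell^j$) rather than with a general supersingular integer.
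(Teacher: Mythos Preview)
Your proposal is correct and follows essentially the same route as the paper: choose $d_0=\ell^{(R-1)/2}$, use the primitive-root hypothesis to lift to all $\ell^j$ (and hence to $2\ell^j$), deduce that $2d_0$ is supersingular, compute $I_q(2d_0)=R$ term by term via \eqref{eq.ineq.rk1}, and then multiply by powers of $p$ to get infinitely many $d$. The only cosmetic differences are that the paper argues supersingularity by saying ``$p$ generates $(\Z/2\ell^r\Z)^\times$, hence $-1$ is a power of $p$'' whereas you identify the explicit exponent $\phi(\ell^k)/2$, and the paper asserts $o_p(2\ell^b)=\phi(2\ell^b)$ directly while you derive it via an lcm; both are trivially equivalent.
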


Given a prime $p\geq 3$, and assuming the existence of a prime $\ell\neq 2,p$ such that $p$ generates $(\Z/\ell^2\Z)^\times$, 
this Theorem gives a third proof of the unboundedness of $d\mapsto\rk\big(E_d(\F_p(t)\big)$ (Theorem \ref{theo.unbounded.rank}).

\begin{proof}  
The case $R=1$ has already been handled above (without any assumption about primes), hence we can assume that $R\geq 3$ and  let $r:=(R-1)/2\geq 1$.

Pick an odd prime $\ell$ such that the subgroup generated by $p$ modulo $\ell^2$ is the whole of $(\Z/\ell^2\Z)^\times$.
A classical argument shows that  $p$ then also generates $(\Z/\ell^b\Z)^\times$ for all $b\geq 1$ (see \cite[Ex. I.6.2]{Hin_eng} for instance); 
and it is easily seen that, for all $b\geq 0$,  $p$ generates $(\Z/2\ell^b\Z)^\times$  as well. 
Therefore, for this choice of $\ell$, one has  $o_p(\ell^b)=\phi(\ell^b)$ and $o_p(2\ell^b)=\phi(2\ell^b)$. 

Since $p$ generates $(\Z/2\ell^r\Z)^\times$, there  exists an integer $a\geq 1$ such that~${p^a\equiv -1 \bmod{2\ell^r}}$ \ie{}, such that $2\ell^r$ divides $p^a+1$. 
Hence the integer $2\ell^r$ is  supersingular. 
Moreover, we have
\[I_p(2\ell^r) 
= \sum_{e\mid 2\ell^r} \frac{\phi(e)}{o_p(e)}
= \sum_{1\leq b\leq r} \frac{\phi(\ell^b)}{o_p(\ell^b)} + \sum_{0\leq b\leq r} \frac{\phi(2\ell^b)}{o_p(2\ell^b)} 
= r+(r+1)=R.\]
We deduce from Corollary \ref{coro.rk.supersing} and \eqref{eq.ineq.rk1} that one has $\rk\big(E_{\ell^r}(K)\big) = I_p(2\ell^r) =R$.

Now put $d_m := p^m\ell^r$ for all $m\geq 0$. 
Since $E_{p^m\ell^r}$ and $E_{\ell^r}$ are isogenous (\emph{via} the $p^m$th power Frobenius), their Mordell--Weil groups have the same rank. 
By the computation above, we thus have $\rk\big(E_{d_m}(K)\big)=R$ 
for all~$m\geq 0$.
\ProofEnd\end{proof}
 
\begin{rema} 
For an explicitly given prime $p$, it is rather easy to find primes  $\ell$ such that $p$ generates~$(\Z/\ell^2\Z)^\times$. 
Here is a sample for small values of $p$:
\renewcommand{\arraystretch}{1.2}
\[\begin{array}{c|l}
p& \{\ell \neq 2, p \text{ prime} : \langle p\bmod{\ell^2}\rangle = (\Z/\ell^2\Z)^\times\} \\ \hline 
3 & \{ 5, 7, 17, 19, 29, 31, 43, 53, 79, 89, 101, 113, 127, 137, 139, ... \} \\ 
5 & \{3, 7, 17, 23, 37, 43, 47, 53, 73, 83, 97, 103, 107, 113, 137, ... \} \\
7 & \{11, 13, 17, 23, 41, 61, 67, 71, 79, 89, 97, 101, 107, 127, 151, ... \} \\
11 & \{3, 13, 17, 23, 29, 31, 41, 47, 59, 67, 73, 101, 103, 109, 149, ... \} \\
13 & \{5, 11, 19, 31, 37, 41, 47, 59, 67, 71, 73, 83, 89, 97, 109, 137, ...\} \\ 
17 & \{5, 7, 11, 23, 31, 37, 41, 61, 97, 107, 113, 131, 139, 167, 173,...\} \\
\end{array} \]

Given a prime $p$, according to Artin's primitive root conjecture there should exist infinitely many primes~$\ell$ such that $p$ generates $(\Z/\ell\Z)^\times$ 
(better, the set of such primes should have a positive density).
Hooley \cite{Hooley_ArtinConj} has proved  Artin's conjecture under the assumption of GRH for the zeta function of certain number fields.
We also note that Heath-Brown later proved (unconditionally) that there are at most two exceptional primes~$p$ for which the conjecture fails to hold, see \cite{HeathBrown_Artin}.
Moreover, if $p$ generates $(\Z/\ell\Z)^\times$, one can show that $p$ also generates $(\Z/\ell^2\Z)^\times$ unless $p^{\ell-1}\equiv 1\bmod{\ell^2}$.
Experiments with small primes $p$ and $\ell$ suggest that the latter happens only very rarely (when $p=5$ for example, there are $30884$ primes $\ell$  in the range~$[3, 10^6]$ such that $5$ generates $(\Z/\ell\Z)^\times$, but only one with $5^{\ell-1}\equiv 1\bmod{\ell^2}$, namely $\ell= 40487$). 
It thus seems quite plausible that, for each prime $p\geq 3$, there are infinitely many primes $\ell$ such that $p$ generates $(\Z/\ell^2\Z)^\times$. 

Assuming that this is true, for all odd integers $R\geq 1$, one can exhibit infinitely many integers $(d_i)_{i\geq 1}$, \emph{all coprime to $p$}, such that $\rank(d_i)=R$ for all $i\geq 1$.
This would provide a `less artificial' construction of such a sequence than that given in the proof above.

\end{rema}

\noindent\hfill\rule{7cm}{0.5pt}\hfill\phantom{.}
 
 \paragraph{Acknowledgements} 
Part of this article is  based on a chapter in the author's PhD thesis \cite{Griffon_PHD}. 
The author wishes to thank his former advisor Marc Hindry for his guidance, and his fruitful and encouraging remarks.
Thanks are also due to Michael Tsfasman and Douglas Ulmer who made several useful comments on a preliminary version of this paper,
and to Peter Stevenhagen for interesting discussions about Artin's primitive root conjecture.
 
The writing of this paper was started at Universiteit Leiden, and  has been finished at Universit\"at Basel.
The author is grateful to both institutions for providing financial support and perfect working conditions; 
he is also partially supported by the ANR grant `FLAIR' (ANR-17-CE40-0012).


\newcommand{\mapolicebackref}[1]{\hspace*{-3pt}{\textcolor{orange}{\small$\uparrow$ #1}}}
\renewcommand*{\backref}[1]{\mapolicebackref{#1}}
\hypersetup{linkcolor=orange!80}

\small

\noindent\rule{7cm}{0.5pt}

\smallskip

{\sc Dep. Mathematik und Informatik, Universit\"at Basel}, Spiegelgasse 1, 4051 Basel, Switzerland.
 
{\it E-mail:} \href{mailto:richard.griffon@unibas.ch}{richard.griffon@unibas.ch}



\begin{thebibliography}{{Mil}06}

\bibitem[Ber08]{Berger}
Lisa Berger.
\newblock Towers of surfaces dominated by products of curves and elliptic
  curves of large rank over function fields.
\newblock {\em J. Number Theory}, 128(12):3013--3030, 2008.

\bibitem[BH12]{BaigHall}
Salman Baig and Chris Hall.
\newblock Experimental data for {G}oldfeld's conjecture over function fields.
\newblock {\em Exp. Math.}, 21(4):362--374, 2012.

\bibitem[Bru92]{Brumer}
Armand Brumer.
\newblock The average rank of elliptic curves. {I}.
\newblock {\em Invent. Math.}, 109(3):445--472, 1992.

\bibitem[Coh07]{Cohen}
Henri Cohen.
\newblock {\em Number theory. {V}ol. {I}. {T}ools and {D}iophantine equations},
  volume 239 of {\em Graduate Texts in Mathematics}.
\newblock Springer, New York, 2007.

\bibitem[Gri16]{Griffon_PHD}
Richard Griffon.
\newblock {\em {A}nalogues du th{\'e}or{\`e}me de {B}rauer-{S}iegel pour
  quelques familles de courbes elliptiques}.
\newblock PhD thesis, Universit{\'e} Paris Diderot, July 2016.
\newblock (available at
  \href{http://pub.math.leidenuniv.nl/~griffonrmm/thesis/Griffon_thesis.pdf}{math.leidenuniv.nl/\~{}griffonrmm/thesis/Griffon\_thesis.pdf}).

\bibitem[Gri18]{Griffon_Hessian}
Richard Griffon.
\newblock Explicit {$L$}-functions and a {B}rauer-{S}iegel theorem for
  {H}essian elliptic curves.
\newblock {\em Journal de Th\'eorie des Nombres de Bordeaux}, (to appear),
  2018.
\newblock (Preprint \href{https://arxiv.org/abs/1709.02761}{ArXiv:1709.02761}).

\bibitem[Gro11]{Gross_bsd}
Benedict~H. Gross.
\newblock Lectures on the conjecture of {B}irch and {S}winnerton-{D}yer.
\newblock In {\em Arithmetic of {$L$}-functions}, volume~18 of {\em IAS/Park
  City Math. Ser.}, pages 169--209. Amer. Math. Soc., Providence, RI, 2011.

\bibitem[HB86]{HeathBrown_Artin}
Roger Heath-Brown.
\newblock {A}rtin's conjecture for primitive roots.
\newblock {\em Quart. J. Math.}, 37(1):27--38, 1986.

\bibitem[Hin11]{Hin_eng}
Marc Hindry.
\newblock {\em {Arithmetics}}.
\newblock {Universitext}. Springer, London, 2011.
\newblock (Trans. from the French by {S}arah {C}arr).

\bibitem[Hoo67]{Hooley_ArtinConj}
Christopher Hooley.
\newblock On {A}rtin's conjecture.
\newblock {\em J. Reine Angew. Math.}, 225:209--220, 1967.

\bibitem[LN97]{LidlN}
Rudolf Lidl and Harald Niederreiter.
\newblock {\em Finite fields}, volume~20 of {\em Encyclopedia of Mathematics
  and its Applications}.
\newblock Cambridge University Press, 2nd edition, 1997.

\bibitem[{Mil}06]{Milne_ArithDual_2nd}
J.~S. {Milne}.
\newblock {\em {Arithmetic duality theorems}}.
\newblock Booksurge, LLC, 2nd edition, 2006.

\bibitem[PS10]{PomeShpa}
Carl Pomerance and Igor~E. Shparlinski.
\newblock Rank statistics for a family of elliptic curves over a function
  field.
\newblock {\em Pure Appl. Math. Q.}, 6(1, Special Issue: In honor of John Tate.
  Part 2):21--40, 2010.

\bibitem[Sil94]{ATAEC}
Joseph~H. Silverman.
\newblock {\em Advanced topics in the arithmetic of elliptic curves}, volume
  151 of {\em Graduate Texts in Mathematics}.
\newblock Springer-Verlag, New York, 1994.

\bibitem[Sil09]{AEC}
Joseph~H. Silverman.
\newblock {\em The arithmetic of elliptic curves}, volume 106 of {\em Graduate
  Texts in Mathematics}.
\newblock Springer, Dordrecht, 2nd edition, 2009.

\bibitem[SS10]{SchShio}
Matthias Sch{{\"u}}tt and Tetsuji Shioda.
\newblock Elliptic surfaces.
\newblock In {\em Algebraic geometry in {E}ast {A}sia---{S}eoul 2008},
  volume~60 of {\em Adv. Stud. Pure Math.}, pages 51--160. Math. Soc. Japan,
  Tokyo, 2010.

\bibitem[ST67]{ShaTate_Rk}
Igor~R. Shafarevich and John~T. Tate.
\newblock The rank of elliptic curves.
\newblock {\em Dokl. Akad. Nauk SSSR}, 175:770--773, 1967.

\bibitem[Tat94]{Tate_conj}
John~T. Tate.
\newblock Conjectures on algebraic cycles in {$l$}-adic cohomology.
\newblock In {\em Motives ({S}eattle, {WA}, 1991)}, volume~55 of {\em Proc.
  Sympos. Pure Math.}, pages 71--83. Amer. Math. Soc., 1994.

\bibitem[Tat66]{Tate_BSD}
John~T. Tate.
\newblock On the conjectures of {B}irch and {S}winnerton-{D}yer and a geometric
  analog.
\newblock In {\em S{\'e}minaire {B}ourbaki, {V}ol.\ 9}, pages 415--440 (Exp.\
  No.\ 306). Soc. Math. France, Paris, 1965/66.

\bibitem[Ulm02]{Ulmer_LargeRk}
Douglas Ulmer.
\newblock Elliptic curves with large rank over function fields.
\newblock {\em Ann. of Math. (2)}, 155(1):295--315, 2002.

\bibitem[Ulm07]{Ulmer_largeLrank}
Douglas Ulmer.
\newblock {$L$}-functions with large analytic rank and abelian varieties with
  large algebraic rank over function fields.
\newblock {\em Invent. Math.}, 167(2):379--408, 2007.

\bibitem[Ulm11]{UlmerParkCity}
Douglas Ulmer.
\newblock Elliptic curves over function fields.
\newblock In {\em Arithmetic of {$L$}-functions}, volume~18 of {\em IAS/Park
  City Math. Ser.}, pages 211--280. Amer. Math. Soc., Providence, RI, 2011.

\bibitem[Ulm13]{Ulmer_MWJacobians}
Douglas Ulmer.
\newblock On {M}ordell-{W}eil groups of {J}acobians over function fields.
\newblock {\em J. Inst. Math. Jussieu}, 12(1):1--29, 2013.

\end{thebibliography}
\end{document}